\definecolor{cof}{RGB}{219,144,71}
\definecolor{pur}{RGB}{186,146,162}
\definecolor{greeo}{RGB}{91,173,69}
\definecolor{greet}{RGB}{52,111,72}
\newtheorem{theorem}{Theorem}[section]
\newtheorem{corollary}[theorem]{Corollary}
\newtheorem{proposition}[theorem]{Proposition}
\newtheorem{lemma}[theorem]{Lemma}
\theoremstyle{definition}
\newtheorem{remark}[theorem]{Remark}
\newtheorem{definition}[theorem]{Definition}
\newtheorem{example}[theorem]{Example}
\newtheorem{conjecture}[theorem]{Conjecture}
\newcommand{\unorm}[1]{\|#1\|} 
\newcommand{\transpose}[1]{#1^{\intercal}}
\DeclareMathOperator{\trace}{trace}
\DeclareMathOperator{\id}{id}
\DeclareMathOperator{\Fix}{Fix}
\newcommand{\bC}{{\mathbb{C}}}
\newcommand{\bN}{{\mathbb{N}}}
\newcommand{\bR}{{\mathbb{R}}}
\newcommand{\bF}{{\mathbb{F}}}
\newcommand{\bT}{{\mathbb{T}}}
 \newcommand{\A}{{\mathcal{A}}}
 \newcommand{\F}{{\mathcal{F}}}
\renewcommand{\H}{{\mathcal{H}}}
 \newcommand{\M}{{\mathcal{M}}}
 \newcommand{\R}{{\mathcal{R}}}
\providecommand{\T}{}\renewcommand{\T}{{\mathcal{T}}}
  \def\U{{\mathcal{U}}}
\newcommand{\range}{\operatorname{ran}}
\newcommand{\rank}{\operatorname{rank}}
\newcommand{\Isom}{\operatorname{Isom}}
\newcommand{\tr}{\operatorname{trace}}
\newcommand{\Skew}{\operatorname{Skew}}
\newcommand{\diag}{\operatorname{diag}}
\newcommand{\qtext}[1]{\quad\text{#1}\quad}
\newcommand{\qand}{\qtext{and}}
\title[Graph rigidity for unitarily invariant matrix norms]{Graph rigidity for unitarily invariant matrix norms}
\author[D. Kitson]{Derek Kitson}
\email{d.kitson@lancaster.ac.uk}
\address{Dept.\ Math.\ Stats.\\ Lancaster University\\
Lancaster LA1 4YF \\U.K. }
\thanks{The first named author is supported by EPSRC grant  EP/P01108X/1.}
\author[R.\,H. Levene]{Rupert H. Levene}
\email{rupert.levene@ucd.ie}
\address{School of Mathematics and Statistics\\University College Dublin\\Belfield\\Dublin 4\\Ireland}%
\let\qedhere\relax
\subjclass[2010]
{52C25, 15A60, 05C50}
\keywords{bar-joint framework,  infinitesimal rigidity, Laman graph, Schatten $p$-norm, cylinder norm}
\begin{document}
\maketitle
\begin{abstract}
A rigidity theory is developed for bar-joint frameworks in linear matrix spaces endowed with a unitarily invariant norm. Analogues of Maxwell's counting criteria are obtained and minimally rigid matrix frameworks are shown to belong to the matroidal class of $(k,l)$-sparse graphs for suitable $k$ and $l$. 
A characterisation of infinitesimal rigidity is obtained for product norms and it is shown that $K_6-e$ (respectively, $K_7$) is the smallest minimally rigid graph for the class of $2\times 2$ symmetric (respectively, hermitian) matrices with the trace norm. 
\end{abstract}
\tableofcontents

\section{Introduction}
A {\em bar-joint framework} is a pair $(G,p)$ consisting of a simple undirected graph $G=(V,E)$ and a mapping of its vertices $p:V\to X$ into a linear space $X$, with $p(v)$ and $p(w)$ distinct for each edge $vw\in E$. Given such a framework, and a norm on $X$, one may ask whether it is possible to perturb the elements of $p(V)$ without altering distances between adjacent vertices, and without applying an isometry of $X$ to $p(V)$. 
This generalises to the setting of normed linear spaces a central problem in structural rigidity for Euclidean bar-joint frameworks; a topic with roots in works of Cauchy \cite{cau} and Maxwell \cite{maxwell} and a broad spectrum of applications (see for example \cite{tho-dux,gue-fow-pow}). 
Recently,  aspects of graph rigidity  have been investigated for polyhedral and $\ell^p$ norms and in general normed spaces \cite{kitson,kit-pow,kit-sch}. 
In this article, we develop  matricial graph rigidity for bar-joint frameworks in linear matrix spaces endowed with a unitarily invariant norm. 
Interesting physical interpretations arise in quantum information theory where Schatten $p$-norms (in particular, the trace norm) feature in the representation of quantum states.
 For an introduction to graph rigidity in Euclidean space we refer the reader to \cite{asi-rot,gra-ser-ser,whi84}.

In Section \ref{RigidMotions} we identify rigid motions for a class of admissible matrix spaces.  This class includes the spaces of all $n\times n$ real and complex matrices, the $n\times n$ symmetric matrices and the  $n\times n$ hermitian matrices.  We then characterise the infinitesimal rigid motions for these spaces (Theorem \ref{thm:skew}) and, in Section \ref{GraphRigidity}, present a rank formula which characterises infinitesimal rigidity for certain matrix frameworks which we call \emph{full} (including those with full affine span). We then provide analogues of the Maxwell counting criteria for Euclidean bar-joint frameworks (Theorem \ref{thm:maxwell}) and show that the graphs of minimally rigid matrix frameworks belong to the matroidal class of $(k,l)$-sparse graphs for suitable values of $k$ and $l$ (Theorem \ref{thm:tight}). Such graphs satisfy a counting rule which is checkable by existing polynomial-time pebble game algorithms.  Interactions between the algebraic structure of these matrix spaces and the accompanying rigidity theory emerge both in the determination of rigid motions and in the identification of infinitesimal flexes for matrix frameworks. 

In Section~\ref{ProductNorms} we obtain a geometric characterisation of infinitesimal rigidity for product norms (Theorem \ref{ProjThm}). 
(This result may be of independent interest). We apply this characterisation in Section~\ref{sect:applications}, where we exploit the cylindrical nature of the trace norm on the space of $2\times 2$ symmetric matrices, to show that the graph of a minimally rigid matrix framework is expressible as an edge-disjoint union of a spanning tree and a spanning Laman graph (Theorem \ref{thm:cyl}). We then exhibit a minimally rigid matrix framework for the smallest such graph, the complete graph $K_6$ with an edge removed, and show that a complete graph $K_m$ admits a placement as a rigid matrix framework if and only if $m\geq 6$. Analogous results are obtained for the space of $2\times 2$ hermitian matrices.

\subsection{Preliminaries}

We now recall a few standard definitions and fix some notation. Throughout, we let $n\in\bN$ with $n\ge2$. Let $\bF$ be either $\bR$ or $\bC$ and let $\M_n(\bF)$ denote the associative algebra of $n\times n$ matrices over $\mathbb F$. As usual, we write~$a^*$ for the conjugate transpose, or adjoint, of a
matrix~$a\in \M_n(\bF)$ (which is simply the transpose in the real case).
Let $\U_n(\bF)$, $\H_n(\bF)$ and $\Skew_n(\bF)$ denote respectively the sets of unitary, hermitian and skew-hermitian matrices in $\M_n(\bF)$  (which in the real case are simply the orthogonal, symmetric and skew-symmetric matrices). We also write $\Skew_n^0(\bF)$ for the set of skew-hermitian matrices with a zero in the $(1,1)$ entry; note that $\Skew_n^0(\bR)=\Skew_n(\bR)$ and $\Skew_n^0(\bC)\subsetneq\Skew_n(\bC)$.
Recall that the commutant $S'$ of a set $S\subseteq \M_n(\bF)$ is the unital algebra
\[ S'=\{y\in \M_n(\bF)\colon \forall\,x\in S,\ xy=yx\}.\] 
For
$x,y\in \M_n(\bF)$, the commutator of $x$ and
$y$ is $[x,y]=xy-yx$. %
If $x=(x_1,\ldots, x_n)\in\bF^n$ then $\diag(x)$ denotes the diagonal matrix in~$\M_n(\bF)$ whose $i$th diagonal entry is~$x_i$.

A norm $\|\cdot\|$ on $\M_n(\bF)$ is \emph{unitarily invariant} if
\[\|a\| = \|uaw\|\quad  \forall \,a\in \M_n(\bF),\,\, \forall \,u,w\in \U_n(\bF).\]
A norm $\|\cdot\|_s$ on $\bR^n$ is \emph{symmetric} if $\|(x_1,\ldots,x_n)\|_s = \|(|x_{\pi(1)}|,\ldots,|x_{\pi(n)}|)\|_s$ for all $(x_1,\ldots, x_n)\in\bR^n$ and all permutations $\pi\in S(n)$.
Von Neumann~\cite{neu} characterised unitarily invariant matrix norms on $\M_n(\bF)$ as those obtained by applying a symmetric norm $\|\cdot\|_s$ to the vector \[\sigma(a)=(\sigma_1(a),\ldots,\sigma_n(a)),\] where $\sigma_i(a)$ is the $i$th largest singular value of the matrix~$a\in \M_n(\bF)$. %
The correspondence is given by
\[\|a\| := \|\sigma(a)\|_s, \quad \|x\|_s := \|\diag(x)\|.\]
Standard examples of unitarily invariant norms are provided by the Schatten $p$-norms
\[\|a\|_{c_p}:= \|\sigma(a)\|_{\ell_p}, \quad \forall \, 1\leq p\leq\infty,\]
and the Ky-Fan $k$-norms
\[\|a\|_{k}:= \sum_{i=1}^k \sigma_i(a),\quad \forall \, 1\leq k\leq n.\]
The Schatten $1$-norm, $2$-norm and $\infty$-norm are known as the trace norm, the Frobenius norm and the spectral norm, respectively. The Frobenius norm is Euclidean in the sense that it is derived from an inner product. The spectral norm is an operator norm with matrices viewed as linear operators on~$\bF^n$ with the usual Euclidean norm.

\section{Rigid motions for admissible matrix spaces}
\label{RigidMotions}
The aim of this section is to describe the linear space of infinitesimal rigid motions for a rich class of normed matrix spaces. 
Explicit characterisations are obtained for suitable norms in the cases of $\M_n(\bF)$ and $\H_n(\bF)$.

\subsection{Admissible matrix spaces}

Let $\Gamma$ be a finite set of real-linear maps  $\M_n(\bF)\to\M_n(\bF)$ which
contains the identity map~$\id$, and has the property that
$\gamma(I)=I$ for all $\gamma\in \Gamma$; we call such a set~$\Gamma$
a \emph{test set} on~$\M_n(\bF)$.
Let $X$ be a real-linear subspace of~$\M_n(\bF)$. If~$\gamma\in
\Gamma$, then the \emph{$\gamma$-commutant} of~$X$ is the real-linear
subspace \[X^\gamma=\{y\in \M_n(\bF)\colon \forall\,x\in X,\
xy=y\gamma(x)\},\] and we
define \[X^\Gamma=\bigcup_{\gamma\in\Gamma}X^\gamma.\] Note that
$X^\Gamma$ decreases as~$X$ increases, and \[X^\Gamma\supseteq
X^{\id}=X'\supseteq \bF I= \{\lambda I\colon \lambda\in \bF\}.\]
\begin{definition}
  If $I\in X$ and $X^\Gamma=\bF I$ is as small as possible, then we
  say that~$X$ is \emph{$\Gamma$-large} in $\M_n(\bF)$.
\end{definition}
\begin{remark}\label{remark:large}
  Let $\Fix(X;\Gamma)$ be the set of matrices in~$X$ fixed
  by a test set~$\Gamma$:
  \[\Fix(X;\Gamma)=\{x\in X\colon \forall\,\gamma\in
  \Gamma,\ \gamma(x)=x\}.\] Plainly, $X^\Gamma\subseteq
  \Fix(X;\Gamma)'$. In particular, if $e_{ij}$ denotes the $(i,j)$ matrix unit in
  $\M_n(\bF)$ and \[S:=\{e_{ij}+e_{ji}\colon 1\le i\le j\le n\}
  \subseteq \Fix(X;\Gamma),\] then $X^\Gamma\subseteq S'=\bF I$, so
  $X$ is $\Gamma$-large in $\M_n(\bF)$.
\end{remark}

\begin{example}
  \label{ex:large}
  Consider \[\Gamma_\bR=\{\text{identity},\text{transpose}\}\quad\text{and}\quad
  \Gamma_\bC=\Gamma_\bR\cup\{\text{adjoint},\text{conjugation}\}.\]
  Plainly, $\Gamma_\bF$ is then a test set on~$\M_n(\bF)$. It is easy
  to check using Remark~\ref{remark:large} that the real-linear spaces
  $\H_n(\bR)$, $\H_n(\bC)$ and $\M_n(\bR)$ are $\Gamma_\bR$-large, and
  $\M_n(\bC)$ is $\Gamma_\bC$-large, in the corresponding $\M_n(\bF)$.
\end{example}

\begin{definition}
  \begin{enumerate}
  \item Let~$\Gamma$ be a test set on~$\M_n(\bF)$ and let
    $\unorm\cdot$ be a unitarily invariant norm on $\M_n(\bF)$.  A
    real-linear subspace $(X,\unorm\cdot)$ of $\M_n(\bF)$ has the
    \emph{$\Gamma$-isometry property} if every real-linear isometry
    $A\colon X\to X$ is of the form
    \[A(x)=u\,\gamma(x)\,w,\quad x\in X\] for some $u,w\in \U_n(\bF)$,
    and some $\gamma\in \Gamma$.

  \item   Given a real-linear space $X\subseteq \M_n(\bF)$ and a unitarily invariant norm
  $\|\cdot\|$ on~$\M_n(\bF)$, we call $(X,\|\cdot\|)$ an \emph{admissible matrix
    space} (in~$\M_n(\bF)$) if
  \begin{enumerate}
  \item there exists a test set $\Gamma$ such that $X$ is $\Gamma$-large in
    $\M_n(\bF)$ and $(X,\|\cdot\|)$ has the $\Gamma$-isometry
    property; and
  \item there exist scalars~$\lambda_i\in \bF$ for $1\leq i\leq n$ so that $e_{ii}\in X$ and $e_{1i}+\lambda_ie_{i1}\in X$; and
  \item for every~$x\in X$, we also have $x^*\in X$.
  \end{enumerate}
  We will also say that~$(X,\unorm\cdot)$ is
  \emph{admissible with respect to $\Gamma$}.

\item   We say that a (unitarily invariant) norm~$\|\cdot\|$ on~$\M_n(\bF)$
  is \emph{admissible} if $(\M_n(\bF),\|\cdot\|)$ is admissible
  in~$\M_n(\bF)$.
\end{enumerate}
\end{definition}

\newenvironment{smallbmatrix}{\left[\begin{smallmatrix}}{\end{smallmatrix}\right]}
\let\mat\smallbmatrix\let\emat\endsmallbmatrix

  \begin{example}[$\M_n(\bF)$]
  \label{ex:Mn}
  Let $\unorm\cdot$ be a unitarily invariant norm on~$\M_n(\bF)$ which
  is not a multiple of the Frobenius norm and, in the case
  $(\bF,n)=(\bR,4)$, is not the Ky-Fan $2$-norm.  The
  $\Gamma_\bF$-isometry property holds
  by~\cite[Theorem~4.1]{lt-unitarily90} and~\cite{sourour}
  in the real and complex cases, respectively. Thus
  $(\M_n(\bF),\|\cdot\|)$ is an admissible matrix space.
  
    \end{example}
    
    \begin{example}[$\H_n(\bR)$]
    \label{ex:symmetric}
    Let
  $\unorm\cdot$ be a unitarily invariant norm on~$\M_n(\bR)$ which is
  not a multiple of the Frobenius norm.
    Suppose one of the 
	following conditions holds:
    \begin{enumerate}[(a)]
    \item $n\ne4$, or,
    \item  $\unorm x\ne
      \unorm{\tfrac12(\trace(x))I-x}$ for some $x\in \H_n(\bR)$.
    \end{enumerate}
    Then the subspace $(\H_n(\bR),\|\cdot\|)$ has the $\Gamma_\bR$-isometry property  
    by~\cite[Theorem~6.3]{lt-duality91} and so $(\H_n(\bR),\|\cdot\|)$ is an admissible matrix space in~$\M_n(\bR)$.
    \end{example}
    
    \begin{example}[$\H_n(\bC)$]
    \label{ex:hermitian}
    Let
    $\unorm\cdot$ be a unitarily invariant norm on~$\M_n(\bC)$ which is
    not %
    induced by an inner product.
    Suppose the 
    following conditions hold:
    \begin{enumerate}[(a)]
    \item There does not exist $f:\bR^2\to \bR$ such that $\unorm x=f(|\trace(x)|, \trace(x^2))$ %
      for all $x\in \H_n(\bC)$; and
    \item  $\unorm x\ne
      \unorm{\tfrac2n(\trace(x))I-x}$ for some $x\in  \H_n(\bC)$. 
    \end{enumerate}
      Then the subspace $(\H_n(\bC),\|\cdot\|)$ has the $\Gamma_\bR$-isometry property      by~\cite[Theorem~2]{lt-hermitian90} %
    and so $(\H_n(\bC),\|\cdot\|)$ is an admissible matrix space in~$\M_n(\bC)$.

    In particular, $(\H_n(\bC),\|\cdot\|_{c_p})$ is admissible in~$\M_n(\bC)$ for $n\ge3$ and $1\leq p\leq \infty$ with $p\neq 2$; to verify condition~(a), consider  $x_1=\mat1&1&0\\1&0&1\\0&1&1\emat\oplus 0$ and
$x_2=\mat2&1&0\\1&0&0\\0&0&0\emat\oplus 0$.
\end{example}

\begin{example}[$(\H_2(\bC),\|\cdot\|_{c_p})$]
  Consider $\H_2(\bC)$ with the Schatten $p$-norm where $p\ne2$. 
  Condition~(a) in Example~\ref{ex:hermitian} fails, since in the $2\times
    2$ case the two singular values (and hence also the
    $c_p$-norm) of any symmetric $2\times 2$ matrix~$x\in X$ are
    determined by $|\trace(x)|$ and
    $\trace(x^2)$. Following~\cite[Theorem~2(c)]{lt-hermitian90}, in addition to the
    isometries arising from $\Gamma_\bR$ and multiplication by unitary
    matrices, we must also consider isometries $A\colon X\to X$ which
    preserve the bilinear form $(x,y)\mapsto \trace(xy)$ on $X\times
    X$ %
    and have $A(I)=\pm
    I$. We claim that any such~$A$ must be of the form $A(x)=\pm
    u\,\gamma(x)\,u^*$ for some $u\in \U_2(\bC)$ and $\gamma\in
    \Gamma_\bR$, so we do indeed have the $\Gamma_\bR$-isometry
    property. To see this, we may first negate~$A$ if necessary to ensure that
    $A(I)=I$. Note that $\trace(A(x)^2)=\trace(x^2)$ and
    $|\trace(A(x))|=|\trace(A(x)A(I))|=|\trace(xI)|=|\trace(x)|$. Hence
    $A$ preserves singular values, and moreover if $\trace(x)=0$, then
    $\trace(A(x))=0$. Consider $x=\mat1&0\\0&-1\emat$. The singular
    values of~$x$, and hence also $A(x)$, are $(1,1)$. Composing $A$ with a suitable
    unitary conjugation, we can arrange that $A(x)$ is diagonal with 
    monotonically decreasing diagonal entries; since $A(x)$
    has trace $0$, we have $A(x)=x$.
    The subspace spanned by $I$
    and $x$ is $D$, the space of diagonal matrices in~$X$, and we have
    shown that $A$ acts trivially on~$D$. Hence the subspace~$E=D^\perp$ spanned by
    $y=\mat 0&1\\1&0\emat$ and $z=\mat 0&i\\-i&0\emat$ must have
    $A(E)=E$. We have $A(y)=\mat 0&\alpha\\\overline{\alpha}&0\emat$
    for some $\alpha\in \bT$, and $\trace(A(y)A(z))=\trace(yz)=0$, so
    it follows that $A(z)=\mat 0&\beta\\\overline{\beta}&0\emat$ where
    $\beta\in\{i\alpha,-i\alpha\}$. Conjugating by the diagonal unitary $\mat
    1&0\\0&\overline \alpha\emat$, we may assume that~$A$ fixes~$I$, $x$ and~$y$, and 
    $A(z)=\pm z$. So either $A(z)$ or $A(\transpose z)=-A(z)$ is equal
    to~$z$.  Precomposing with the transpose if necessary,
    we reduce~$A$ to the identity map, verifying the claim above.
    Hence $(\H_2(\bC),\|\cdot\|_{c_p})$ is admissible in~$\M_2(\bC)$ provided $p\ne2$.
  \end{example}

  \begin{remark}\label{rk:admissible}
    These examples show that in particular, $\H_n(\bF)$ and
    $\M_n(\bF)$ are admissible in~$\M_n(\bF)$ with respect to the
    Schatten $p$-norm for any~$n\ge2$ and $1\leq p\leq \infty$ with
    $p\ne2$. Note that the Schatten $2$-norm is not admissible; however, it
    arises from an inner product and so the accompanying graph rigidity 
    follows that of the Euclidean norm. 
     \end{remark}
  
\subsection{Rigid motions}

Recall~\cite{kit-pow,kit-sch} that a \emph{rigid motion} of a normed space $(X,\|\cdot\|)$ is a collection of continuous paths $\alpha=\{\alpha_x:[-1,1]\to X\}_{x\in X}$, with the following properties:
\begin{enumerate}[(a)]
\item
$\alpha_x(0)=x$ for all $x\in X$;
\item
$\alpha_x(t)$ is differentiable at $t=0$ for all $x\in X$; and
\item
$\|\alpha_x(t)-\alpha_y(t)\| = \|x-y\|$ for all $x,y\in X$ and for all $t\in [-1,1]$.
\end{enumerate} 
Note that formally, $\alpha$ is a map $\alpha\colon X\times [-1,1]\to X$, $\alpha(x,t)=\alpha_x(t)$ which satisfies these conditions; we will routinely interchange the notation $\alpha(x,t)$ with $\alpha_x(t)$ where it eases the exposition.
We write~$\R(X,\|\cdot\|)$ for the set of all rigid motions of~$(X,\|\cdot\|)$. 
As we will shortly see, in admissible matrix spaces a rigid motion always has 
a particularly nice form near $t=0$. 

\begin{lemma}
\label{lem:rigidmotion}
Let $(X,\|\cdot\|)$ be a normed space and let $\alpha\in\R(X,\|\cdot\|)$.
Then, \begin{enumerate}[(i)]
\item for each $t\in [-1,1]$ there exists a real-linear
  isometry $A_t:X\to X$ and a vector $c(t)\in X$ such that
  \[\alpha_x(t) = A_t(x)+c(t), \quad \forall\,x\in X.\]
\item the map $c:[-1,1]\to X$ is continuous on $[-1,1]$ and 
  differentiable at $t=0$,
	\item  for every~$x\in X$, the map $A_*(x):[-1,1]\to X$, $t\mapsto A_t(x)$, is continuous on $[-1,1]$ and differentiable at $t=0$, and, 
	\item $A_0=I$ and $c(0) = 0$. 
\end{enumerate}
\end{lemma}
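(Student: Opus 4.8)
The plan is to extract the four assertions from the isometry theory of the paths $\alpha_x$, using the defining properties (a)--(c) of a rigid motion. For part (i), fix $t$ and consider the map $\alpha_x(t)$ as a function of $x$. By property (c), $\|\alpha_x(t)-\alpha_y(t)\|=\|x-y\|$, so $x\mapsto\alpha_x(t)$ is a surjective (onto its image) isometry between subsets of $X$; I would first check it is onto $X$. Define $c(t):=\alpha_0(t)$ (here $0$ is the zero vector of $X$, which lies in $X$ since $X$ is a linear space) and $A_t(x):=\alpha_x(t)-c(t)$. Then $A_t$ is an isometry with $A_t(0)=0$, and by the Mazur--Ulam theorem a surjective isometry of a real normed space fixing the origin is real-linear; the surjectivity of $A_t$ will follow from surjectivity of $x\mapsto\alpha_x(t)$, which I expect to be the one genuinely delicate point (see below). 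This gives the decomposition $\alpha_x(t)=A_t(x)+c(t)$.

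For part (iv): at $t=0$, property (a) gives $\alpha_x(0)=x$ for all $x$, so in particular $c(0)=\alpha_0(0)=0$ and then $A_0(x)=\alpha_x(0)-c(0)=x$, i.e.\ $A_0=\id$. For parts (ii) and (iii): $c(t)=\alpha_0(t)$ is continuous on $[-1,1]$ and differentiable at $t=0$ directly from properties of the path $\alpha_0$ (continuity is part of the definition of rigid motion, differentiability at $0$ is property (b) applied to $x=0$). Similarly, for each fixed $x$, the map $t\mapsto A_t(x)=\alpha_x(t)-\alpha_0(t)$ is a difference of two paths each continuous on $[-1,1]$ and differentiable at $t=0$, hence has the same properties.

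The main obstacle is establishing that $x\mapsto\alpha_x(t)$ maps $X$ onto $X$, which is needed to invoke Mazur--Ulam and conclude real-linearity of $A_t$. Surjectivity is not immediate from the axioms, since a priori the image could be a proper isometric copy of $X$ sitting inside $X$. The standard route is a connectedness/continuity argument: the image of $\alpha_\bullet(t)$ is an isometric copy of $X$; in finite dimensions (which is our setting, as $X\subseteq\M_n(\bF)$) one can argue that an isometry of $X$ into itself must be onto — for instance, because a linear isometric embedding of a finite-dimensional normed space into itself is automatically surjective by dimension count, once one knows the map is affine. To get affineness without first knowing surjectivity, one can instead use the Figiel-type or local form of Mazur--Ulam, or argue directly: since $X$ is finite-dimensional, the isometric image $\alpha_\bullet(t)(X)$ is a closed subset isometric to $X$; using that $t\mapsto\alpha_x(t)$ is continuous with $\alpha_x(0)=x$, a degree or invariance-of-domain argument shows the image is open as well as closed, hence all of $X$. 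Once surjectivity is in hand, Mazur--Ulam finishes part (i) and the remaining parts are routine as sketched.
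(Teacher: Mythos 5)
Your proposal is correct and follows essentially the same route as the paper: define $c(t)=\alpha_0(t)$ and $A_t(x)=\alpha_x(t)-c(t)$, establish that the isometry $x\mapsto\alpha_x(t)$ of the finite-dimensional space $X$ is automatically surjective, invoke Mazur--Ulam to get real-linearity of $A_t$, and read off (ii)--(iv) from the continuity and differentiability of the individual paths. The only difference is that the paper disposes of the surjectivity step by citing Bhatia and \v{S}emrl, whereas you sketch it via a closed-plus-open (invariance of domain) argument; both rest on the same fact that an into-isometry of a finite-dimensional normed space is onto.
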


\begin{proof}
By property~(c) of the rigid motion $\alpha$, for every fixed
  $t\in [-1,1]$, the map $x\mapsto \alpha_x(t)$ is an isometry of
  $(X,\unorm{\cdot})$. Since~$X$ is finite dimensional, this isometry is necessarily surjective (see for example \cite[p.~500]{bha-sem}) so this is a real-affine
  map by the Mazur-Ulam theorem. Hence there exists a real-linear isometry $A_t:X\to X$ and $c(t)\in X$ such that
  \[\alpha_x(t) = A_t(x)+c(t), \quad \forall\,x\in X.\]
  Note that~$c(t)=\alpha_0(t)$ is a continuous function of~$t$ (and is
  differentiable at $t=0$), so $A_t(x)=\alpha_x(t)-c(t)$ is also a
  continuous function of~$t$ (and is differentiable at $t=0$), for
  every~$x\in X$. Finally, $c(0) = \alpha_0(0) = 0$ and $A_0(x) = \alpha_x(0)=x$ for every~$x\in X$. 
\end{proof}

 In the proof of the following proposition, for $X\subseteq \M_n(\bF)$ we say that a map~$A\colon X\to X$ is \emph{implemented by
    unitaries} if there exist~$r,s\in \U_n(\bF)$ so that $A(x)=rxs$ for
  every~$x\in X$. 
	
\begin{proposition}\label{prop:rigid}
  Let $(X,\unorm\cdot)$ be an admissible matrix space in~$\M_n(\bF)$.
  For any~$\alpha\in\R(X,\|\cdot\|)$, there is a neighbourhood~$T$
  of~$0$ in $[-1,1]$, and matrices $u_t,w_t\in \U_n(\bF)$ and
  $c(t)\in X$ for each $t\in T$, so that
\begin{enumerate}[(i)]
\item
$\alpha_x(t) = u_txw_t+c(t), \quad \forall\,x\in X,\ t\in T$;

\item
$c(0)=0$ and $u_0=w_0=I$; %

\item the maps $t\mapsto c(t)$ and $t\mapsto u_txw_t$ are both differentiable at $t=0$, for any $x\in X$; and
\item
the maps $t\mapsto u_t$ and $t\mapsto w_t$ are continuous at $t=0$.
\end{enumerate}
\end{proposition}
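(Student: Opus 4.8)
The plan is to combine Lemma~\ref{lem:rigidmotion} with the $\Gamma$-isometry property to upgrade the abstract real-affine decomposition $\alpha_x(t)=A_t(x)+c(t)$ into one where $A_t$ is implemented by unitaries in a whole neighbourhood of $0$. First I would invoke Lemma~\ref{lem:rigidmotion} to obtain, for each $t\in[-1,1]$, a real-linear isometry $A_t\colon X\to X$ and a vector $c(t)\in X$ with $\alpha_x(t)=A_t(x)+c(t)$, together with the continuity/differentiability of $c$ and of $t\mapsto A_t(x)$ at $t=0$, and $A_0=\id$, $c(0)=0$. Since $(X,\unorm\cdot)$ is admissible with respect to some test set $\Gamma$, the $\Gamma$-isometry property says each $A_t$ has the form $A_t(x)=u_t\,\gamma_t(x)\,w_t$ for some $u_t,w_t\in\U_n(\bF)$ and some $\gamma_t\in\Gamma$.

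The key step is to show that $\gamma_t=\id$ for all $t$ in some neighbourhood $T$ of $0$. For this I would look at the restriction of $A_t$ to the finitely many matrices guaranteed by admissibility: the $e_{ii}$ and the $e_{1i}+\lambda_ie_{i1}$ (and, via condition~(c), the adjoints, so in particular $e_{ii}$ and quantities like $e_{1i}+\ol\lambda_ie_{i1}$). Each nonidentity $\gamma\in\Gamma$ — transpose, adjoint, conjugation — moves at least one of these finitely many elements a fixed positive distance away from where $\id$ sends it; more precisely, the set $\Gamma$ is finite, so there is $\varepsilon>0$ such that for every $\gamma\in\Gamma\setminus\{\id\}$ there is a test matrix $x$ among this finite list with $\unorm{\gamma(x)-x}\ge\varepsilon$ (this is where admissibility condition~(b), which forces $X$ to contain off-diagonal matrix units up to scalars on which transpose/conjugation act nontrivially, does the work — and one checks directly, using that a nonidentity element of $\Gamma_\bF$ cannot fix all $e_{ii}$ and all $e_{1i}+\lambda_ie_{i1}$ simultaneously). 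On the other hand, by Lemma~\ref{lem:rigidmotion}(iii) applied to each of these finitely many $x$, the maps $t\mapsto A_t(x)=\alpha_x(t)-c(t)$ are continuous at $t=0$ and equal $x$ at $t=0$, so for $t$ small enough we have $\unorm{A_t(x)-x}<\varepsilon/2$ simultaneously for all the test matrices $x$. Now if $\gamma_t\ne\id$, pick the corresponding test matrix $x$; then $\unorm{A_t(x)-x}\ge\unorm{\gamma_t(x)-x}-\unorm{A_t(x)-\gamma_t(x)}$, and I would need to control $\unorm{A_t(x)-\gamma_t(x)}=\unorm{u_t\gamma_t(x)w_t-\gamma_t(x)}$ — this is small precisely when $u_t,w_t$ are close to $I$, so the argument needs a little care and is best run by first establishing that whenever $A_t(x_0)$ is close to $x_0$ for a suitable single anchor (e.g.\ $x_0=I$, giving $u_tw_t\approx I$) together with closeness on one off-diagonal test element, one forces both $u_t$ and $w_t$ near $I$, and then the $\gamma_t=\id$ conclusion follows. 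An expedient packaging is: the function $t\mapsto A_t$ takes values in the isometry group, which is compact; the subset of isometries of the form $x\mapsto u\gamma(x)w$ with a \emph{fixed} $\gamma$ is closed; these finitely many closed sets cover the group; and the ones with $\gamma\ne\id$ are at positive distance from $A_0=\id$ in the operator norm on $B(X)$ (again using the finite test list), so by continuity of $t\mapsto A_t$ at $0$ (which follows from Lemma~\ref{lem:rigidmotion}(iii) and finite-dimensionality of $X$) there is a neighbourhood $T$ of $0$ on which $\gamma_t=\id$.

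Once $\gamma_t=\id$ on $T$, on that neighbourhood we have $\alpha_x(t)=u_t x w_t+c(t)$ for all $x\in X$, giving~(i). Then~(ii) is immediate from $A_0=\id$ (so $u_0\cdot w_0$ acts as the identity; after absorbing a scalar phase we may take $u_0=w_0=I$) and $c(0)=0$. For~(iii), differentiability of $t\mapsto c(t)$ at $0$ is part of Lemma~\ref{lem:rigidmotion}, and differentiability of $t\mapsto u_txw_t=A_t(x)=\alpha_x(t)-c(t)$ at $0$ follows from differentiability of $\alpha_x$ and of $c$ at $0$. For~(iv), the continuity of $t\mapsto u_t$ and $t\mapsto w_t$ at $0$: I would extract it from the continuity of the finitely many maps $t\mapsto u_txw_t$ at $0$ by solving for $u_t$ and $w_t$; e.g.\ evaluating at $x=I$ gives $t\mapsto u_tw_t$ continuous, and evaluating at an off-diagonal test matrix $x$ from the admissibility list lets one separate the two one-sided unitary factors (up to the unavoidable phase ambiguity, which can be fixed by a normalisation such as requiring a chosen matrix entry of $u_t$ to be positive), using the polar-type uniqueness of such representations together with compactness of $\U_n(\bF)$ to pass from "every convergent subsequence has the right limit" to "the whole net converges". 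The main obstacle is this last bookkeeping — pinning down $u_t$ and $w_t$ individually and continuously from the product representation, in the presence of the scalar-phase ambiguity $(u_t,w_t)\mapsto(\zeta u_t,\bar\zeta w_t)$ — rather than the conceptually clean step of ruling out nonidentity $\gamma_t$ near $0$.
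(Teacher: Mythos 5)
Your overall architecture (Lemma~\ref{lem:rigidmotion} $+$ the $\Gamma$-isometry property $+$ compactness of $\U_n(\bF)$ $+$ finiteness of $\Gamma$) matches the paper's, but the key step contains a genuine error. You claim that every nonidentity $\gamma\in\Gamma$ moves at least one of the admissibility test matrices $e_{ii}$, $e_{1i}+\lambda_ie_{i1}$ a fixed positive distance, and hence that the sets $S_\gamma=\{x\mapsto u\gamma(x)w\colon u,w\in\U_n(\bF)\}$ with $\gamma\ne\id$ stay a positive distance from the identity. This is false: for $X=\H_n(\bR)$ with $\Gamma=\Gamma_\bR$ the transpose fixes \emph{every} element of~$X$, in particular all of $e_{ii}$ and $e_{1i}+e_{i1}$, and then $S_{\text{transpose}}=S_{\id}$ contains the identity. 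More generally a test set is just an abstract finite family of linear maps fixing~$I$; nothing forces a nonidentity element to act nontrivially on~$X$. So the conclusion you are aiming for ($\gamma_t=\id$ near $0$) is not attainable and is also not what is needed: the correct target is that $A_t$ is \emph{implemented by unitaries} for $t$ near~$0$. The separation fact that actually holds is that $\id\in S_\gamma$ forces $\gamma|_X=\id_X$ (set $x=I$ to get $w=u^*$, then $xu=u\gamma(x)$ gives $u\in X^\gamma\subseteq X^\Gamma=\bF I$ by $\Gamma$-largeness, whence $\gamma(x)=x$ on $X$ and $S_\gamma=S_{\id}$); this uses the $\Gamma$-largeness hypothesis, not condition~(b) of admissibility, which is the lever you invoke. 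With that repair your ``expedient packaging'' does go through, and it is essentially the paper's final step run in topological language: the paper instead argues by contradiction with a sequence $t_n\to0$, $t_n\notin T$, extracts a constant $\gamma$, and concludes $\gamma|_X=\id_X$ so that $A_{t_n}$ was implemented by unitaries after all.

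A secondary remark on~(iv): your plan to ``separate the two unitary factors'' from the maps $t\mapsto u_txw_t$ is in the right spirit but the mechanism is again $\Gamma$-largeness rather than the test list: any subsequential limit $(u,w)$ of $(u_t,w_t)$ satisfies $uxw=x$ for all $x\in X$, hence $w=u^*$ and $u\in X'\subseteq X^\Gamma=\bF I$, and a phase normalisation (the paper uses $\trace(u_t)\ge0$; your ``fix a positive matrix entry'' can fail if that entry vanishes) pins the scalar to~$1$. Note also that the paper establishes this continuity \emph{before} showing $T$ is a neighbourhood of~$0$, precisely to break the circularity you flag.
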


\begin{proof}
  Let $\alpha\in\R(X,\|\cdot\|)$. Then for each $t\in [-1,1]$ there exists a real-linear isometry $A_t:X\to X$ and vector $c(t)\in X$ as in Lemma \ref{lem:rigidmotion}.  
	Consider the set \[T=\{t\in [-1,1]\colon
  \text{$A_t$ is implemented by unitaries}\}.\]
  Note that $0\in T$ since
  $A_0$ is the identity map on~$X$.
  Let $\Gamma$ be a test set with respect to which $(X,\unorm\cdot)$ is admissible.
  By the $\Gamma$-isometry property, for every~$t\in [-1,1]$, there exist $r_t,s_t\in \U_n(\bF)$ and $\gamma_t\in \Gamma$ so that 
  \begin{equation}\label{eq:form1}A_t(x) = r_t\,\gamma_t(x)\,s_t \quad \forall\,x\in X,
  \end{equation}
  and for~$t\in T$ we may insist that~$\gamma_t=\id$. We can also take~$r_0=s_0=I$.

  For $t\in[-1,1]$, let $\theta_t=\arg (\trace(r_t))$, and define $u_t$, $w_t$ by
  \[ u_t=e^{-i\theta_t}r_t,\quad w_t=e^{i\theta_t} s_t.\] Note that
  $\trace(u_t)\geq 0$ for all $t\in [-1,1]$, and
  $u_0=v_0=I$. Moreover, for each $x\in X$, we have
  $A_t(x)=u_t\,\gamma_t(x)\,w_t$. In particular,
  $\alpha_x(t)=u_txv_t+c(t)$ for every~$x\in X$ and~$t\in T$.
  
  If $u_t$ is not continuous at $t=0$, then there exist $\epsilon>0$ and
  a sequence $t_n\to 0$ so that $\|u_{t_n}-I\|\geq\epsilon$ for
  all~$n\in\bN$.  Since $\U_n(\bF)$ is compact, there is a
  subsequence $(t_{n_k})$ such that $(u_{t_{n_k}})$ and $(w_{t_{n_k}})$
  are both convergent, say to ${u}$ and ${w}$, respectively. 
  Then $u,w\in \U_n(\bF)$ and  since $\gamma_t(I)=I$ for every~$t$, we have
  \[ I=A_0(I)=\lim_{k\to \infty} A_{t_{n_k}}(I)=\lim_{k\to \infty} u_{t_{n_k}}\,\gamma_{t_{n_k}}(I)\,w_{t_{n_k}}= u w,\]
  so $w= u^*$.
  Since the test set~$\Gamma$ is finite, passing to a further subsequence if necessary, we can arrange that
  $\gamma_{t_{n_k}}$ is independent of~$k$, say
  $\gamma_{t_{n_k}}=\gamma$ for all $k\ge1$.
  For every $x\in X$, we have
  \[x=A_0(x)=\lim_{k\to \infty}A_{t_{n_k}}(x) = \lim_{k\to\infty}
  u_{t_{n_k}}\,\gamma_{t_{n_k}}(x)\, w_{t_{n_k}} = u\,\gamma(x)
  \,u^*,\] so $xu=u\gamma(x)$, hence $u\in X^\Gamma=\bF I$ since $X$
  is $\Gamma$-large. Now
  $\trace({u})=\lim_{k\to\infty}\trace(u_{t_{n_k}})\geq 0$, so
  $u=I$ and
  \[ 0= \|u-I\|=\lim_{k\to \infty}\|u_{t_{n_k}}-I\|\geq \epsilon>0,\]
  a contradiction.
  Hence $t\mapsto u_t$ is continuous at~$t=0$, so $t\mapsto w_t=u_t^*
  A_t(I)$ is also continuous at~$t=0$.
  
  Finally, if $T$ is not a neighbourhood of~$0$, then there is
  sequence $t_n\to 0$ with $t_n\in [-1,1]\setminus T$ for all
  $n\ge1$. Passing to an infinite subsequence on which $\gamma_t$ is
  constant, we may assume that $\gamma_{t_n}=\gamma$ does not depend
  on~$n$. Let $x\in X$.  Since $t\mapsto A_t(x)$ is continuous at
  $t=0$ and we know that $u_{t_n}\to I$ and $w_{t_n}\to I$ as $n\to
  \infty$, we have
  \[ x=\lim_{n\to \infty} A_{t_n}(x)=\lim_{n\to
    \infty}u_{t_n}\,\gamma(x)\,w_{t_n}=\gamma(x),\] so $x=\gamma(x)$
  for all $x\in X$. In particular, setting $t=t_1\in [-1,1]\setminus
  T$, we have $A_t(x)=u_t\,\gamma(x)\,w_t = u_t\,x\,w_t$ for all $x\in
  X$, so $t\in T$, a contradiction.
\end{proof}

\subsection{Infinitesimal rigid motions}
A vector field $\eta:X\to X$ of the form $\eta(x)= \alpha_x'(0)$ where $\alpha\in \R(X,\|\cdot\|)$ is referred to as  an \emph{infinitesimal rigid motion} of~$(X,\unorm\cdot)$.  We also say that~$\eta$ is \emph{induced} by the rigid motion~$\alpha$. The collection of all infinitesimal rigid motions of a normed space $(X,\unorm\cdot)$ is a real-linear subspace of $X^X$, denoted $\T(X,\unorm\cdot)$.

\begin{lemma}
\label{lem:triv_aff}
If $(X,\|\cdot\|)$ is a normed space, then every
$\eta\in\T(X,\|\cdot\|)$
is an affine map.  
\end{lemma}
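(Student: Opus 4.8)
The plan is to show that each $\eta\in\T(X,\|\cdot\|)$ is affine by exhibiting it as the pointwise derivative at $t=0$ of the affine maps $x\mapsto A_t(x)+c(t)$ supplied by Lemma~\ref{lem:rigidmotion}. Fix $\alpha\in\R(X,\|\cdot\|)$ inducing $\eta$, so $\eta(x)=\alpha_x'(0)$ for every $x\in X$. By Lemma~\ref{lem:rigidmotion}, for each $t\in[-1,1]$ we may write $\alpha_x(t)=A_t(x)+c(t)$ with $A_t\colon X\to X$ real-linear and isometric, $c\colon[-1,1]\to X$ continuous and differentiable at $0$, $A_0=\id$, $c(0)=0$, and $t\mapsto A_t(x)$ continuous on $[-1,1]$ and differentiable at $0$ for every $x$. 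Hence $\eta(x)=\frac{d}{dt}\big|_{t=0}A_t(x)+c'(0)=:L(x)+b$, where $b=c'(0)\in X$ and $L(x)$ is the derivative of $t\mapsto A_t(x)$ at $t=0$.

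The key point is that $L$ is real-linear. For scalars $\lambda,\mu\in\bR$ and $x,y\in X$, real-linearity of each $A_t$ gives $A_t(\lambda x+\mu y)=\lambda A_t(x)+\mu A_t(y)$ for all $t$; subtracting $A_0(\lambda x+\mu y)=\lambda x+\mu y$, dividing by $t\ne0$, and letting $t\to0$, all three difference quotients converge (to $L(\lambda x+\mu y)$, $L(x)$, $L(y)$ respectively, by differentiability at $0$), so $L(\lambda x+\mu y)=\lambda L(x)+\mu L(y)$. Therefore $\eta=L+b$ with $L$ real-linear and $b\in X$, which is precisely the assertion that $\eta$ is an affine map.

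I do not expect a serious obstacle here: the entire content is packaged into Lemma~\ref{lem:rigidmotion}, and what remains is the elementary observation that a pointwise limit of the difference quotients of a family of linear maps is again linear, valid because the differentiability at $t=0$ guaranteed by that lemma ensures each relevant limit exists. The only mild subtlety worth a sentence is that affineness is a pointwise statement about $\eta$ as an element of $X^X$, so it suffices to verify the identity $\eta(\lambda x+(1-\lambda)y)=\lambda\eta(x)+(1-\lambda)\eta(y)$ (equivalently the linearity of $x\mapsto\eta(x)-\eta(0)$) for individual points $x,y\in X$, which the argument above does.
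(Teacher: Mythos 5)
Your proof is correct and follows essentially the same route as the paper: both decompose $\alpha_x(t)=A_t(x)+c(t)$ via Lemma~\ref{lem:rigidmotion} and differentiate at $t=0$ to write $\eta$ as a real-linear map plus the constant $c'(0)$. The only difference is that you spell out the (easy) verification that the pointwise derivative $L$ inherits real-linearity from the maps $A_t$, a step the paper simply asserts.
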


\begin{proof}
Suppose~$\eta$ is induced by $\alpha\in \R(X,\|\cdot\|)$. For $x\in X$ and $t\in [-1,1]$, write $\alpha_x(t)=A_t(x)+c(t)$ where the real-linear maps $A_t:X\to X$ and vectors $c(t)\in X$ are as in Lemma~\ref{lem:rigidmotion}.
Then $\eta(x) = \alpha_x'(0) = B(x) + c'(0)$ where $B\colon X\to X$ is the real-linear map given by $B(x)=\frac{d}{dt}A_t(x)|_{t=0}$.
\end{proof}

From the viewpoint of infinitesimal rigidity theory, which we consider in Section~\ref{GraphRigidity}, infinitesimal rigid motions yield trivial deformations of a framework since they arise from a global deformation of~$X$. We will now identify these in our context.

\begin{theorem}\label{thm:skew}
  Let $\unorm\cdot$ be a unitarily invariant norm on~$\M_n(\bF)$, and
  suppose that $(X,\unorm\cdot)$ is an admissible matrix space. If
  $\eta\in \T(X,\unorm\cdot)$, then there exist unique matrices
  $a,b,c\in \M_n(\bF)$ with $a\in \Skew_n(\bF)$, $b\in \Skew_n^0(\bF)$ and~$c\in X$ so that
  \[\eta(x)=ax+xb+c, \quad \forall\, x\in X.\]  %
\end{theorem}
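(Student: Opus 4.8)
The plan is to start from Proposition~\ref{prop:rigid}, which tells us that for any $\alpha\in\R(X,\unorm\cdot)$ inducing $\eta$, we may write $\alpha_x(t)=u_t x w_t + c(t)$ on a neighbourhood~$T$ of~$0$, with $u_0=w_0=I$, with $t\mapsto c(t)$ and $t\mapsto u_t x w_t$ differentiable at~$0$, and with $t\mapsto u_t$ and $t\mapsto w_t$ continuous at~$0$. Differentiating at $t=0$ formally suggests $\eta(x)=u_0' x w_0 + u_0 x w_0' + c'(0)=ax+xb+c$ with $a=u_0'$, $b=w_0'$, $c=c'(0)$, so the heart of the argument is to justify that $u_t$ and $w_t$ can be taken differentiable at~$0$ (a priori we only have continuity), and then to pin down the algebraic constraints $a\in\Skew_n(\bF)$, $b\in\Skew_n^0(\bF)$, and uniqueness.

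First I would extract differentiability. Pick finitely many matrices $x$ in~$X$ — in fact using condition~(b) of admissibility, the matrices $e_{ii}$ and $e_{1i}+\lambda_i e_{i1}$ — whose behaviour under $x\mapsto u_t x w_t$ determines enough of $u_t$ and $w_t$. Concretely, since $u_0=w_0=I$, for small~$t$ the map $x\mapsto u_t x w_t$ is close to the identity; writing the difference quotients $\tfrac1t(u_t x w_t + c(t) - x)$, which converge as $t\to0$ by Proposition~\ref{prop:rigid}(iii), and subtracting off the convergent $\tfrac1t c(t)$, we get that $\tfrac1t(u_t x w_t - x)\to \eta(x)-c$ for a suitable constant matrix $c=c'(0)\in X$. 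Applying this with $x=I$ gives $\tfrac1t(u_t w_t - I)\to \eta(I)-c=:s$. Applying it with $x=e_{ii}$ and combining with the $x=I$ limit (and using $w_t = u_t^{-1}(u_tw_t)$ together with continuity $u_t\to I$) lets one solve for the limits of $\tfrac1t(u_t - I)$ and $\tfrac1t(w_t-I)$ separately: the off-diagonal structure of the $e_{ii}$ and $e_{1i}+\lambda_ie_{i1}$ relations forces these difference quotients to converge. Call the limits $a$ and~$b$; then $u_t=I+ta+o(t)$, $w_t=I+tb+o(t)$, and expanding $u_t x w_t = x + t(ax+xb) + o(t)$ yields $\eta(x)=ax+xb+c$ for all $x\in X$ (first for the spanning set of test matrices, hence by real-linearity of $\eta$, $x\mapsto ax+xb$, for all $x\in X$ if these span — otherwise one argues directly since $\eta-c$ and $x\mapsto ax+xb$ agree on a generating family and both are real-linear).

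Next, the algebraic constraints. Since $u_t,w_t\in\U_n(\bF)$, differentiating $u_t^* u_t = I$ at $t=0$ gives $a^*+a=0$, so $a\in\Skew_n(\bF)$, and likewise $b\in\Skew_n(\bF)$. To get $b\in\Skew_n^0(\bF)$, note the normalisation $\theta_t=\arg\trace(r_t)$ built into Proposition~\ref{prop:rigid} forced $\trace(u_t)\ge0$, i.e. $\trace(u_t)\in\bR_+$; differentiating shows $\trace(a)\in i\bR$ is actually forced to have zero imaginary part adjustment, but more usefully: the decomposition $(a,b)$ is not unique — replacing $(u_t,w_t)$ by $(e^{i\phi t}u_t, e^{-i\phi t}w_t)$ shifts $(a,b)\mapsto(a+i\phi I, b-i\phi I)$ — and we use exactly one real parameter's worth of this freedom to normalise $b$ to have zero $(1,1)$ entry (in the complex case; in the real case $\Skew_n^0(\bR)=\Skew_n(\bR)$ so there is nothing to normalise and correspondingly no such freedom). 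That this is possible and unique is because $b_{11}\in i\bR$ (diagonal entry of a skew-hermitian matrix) and $i\phi$ ranges over all of $i\bR$.

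The main obstacle I anticipate is the differentiability extraction in the second paragraph: Proposition~\ref{prop:rigid} only hands us differentiability of the \emph{composite} $t\mapsto u_t x w_t$ and continuity of $u_t$, $w_t$ individually, so one must genuinely use the structure of the admissible test matrices $e_{ii}$, $e_{1i}+\lambda_i e_{i1}$ (and $I$) to disentangle $u_t$ from $w_t$ and show each is separately differentiable at~$0$ — a linear-algebra argument showing that the relevant "derivative of conjugation-type action" map is injective enough. A secondary subtlety is the uniqueness statement: one must check that no nonzero $(a,b,c)$ with $a\in\Skew_n(\bF)$, $b\in\Skew_n^0(\bF)$, $c\in X$ gives the zero vector field on~$X$; this reduces to showing that $ax+xb=0$ for all $x$ in the spanning test set, together with $c=0$ (from $x=0$, if $0\in X$, or from the test matrices), forces $a=b=0$ — again leveraging condition~(b), since $ae_{1i}+e_{1i}b=0$ for the relevant $i$ constrains the rows/columns of $a$ and $b$, and the zero-$(1,1)$-entry condition on $b$ kills the residual scalar ambiguity.
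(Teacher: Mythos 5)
Your strategy---differentiate $u_t$ and $w_t$ separately at $t=0$ and read off $a=u_0'$, $b=w_0'$---is not the paper's route, and the step you yourself flag as the main obstacle contains a genuine gap. The relations you extract from the test matrices have the form $\lim_{t\to0}\tfrac1t(u_txw_t-x)$; subtracting the $x=I$ relation and using $u_t\to I$, they reduce to convergence of $\tfrac1t[x,w_t]$ for $x$ in the test set. But commutators are blind to scalar multiples of the identity: $[x,w_t]=[x,\,w_t-\mu_tI]$ for any scalar $\mu_t$. So what the test-matrix relations actually force to converge is $\tfrac1t\left(w_t-(w_t)_{11}I\right)$, not $\tfrac1t(w_t-I)$; the residual scalar $\tfrac1t((w_t)_{11}-1)$ is invisible to them. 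This is exactly the one-real-parameter gauge freedom $(u_t,w_t)\mapsto(e^{i\phi}u_t,e^{-i\phi}w_t)$ you invoke in your third paragraph, so the ``injective enough'' claim of your second paragraph fails by precisely that one dimension. In the complex case the danger is concrete: a phase $(w_t)_{11}\sim e^{i\theta_t}$ with, say, $\theta_t=t\sin(1/t)$ is compatible with continuity of $u_t,w_t$ and with differentiability of every map $t\mapsto u_txw_t$, yet makes $w_t$ non-differentiable at $0$, and nothing in your argument excludes it. Your intermediate claim can in fact be rescued, but only by combining (i) convergence of $\tfrac1t(w_t-(w_t)_{11}I)$, (ii) unitarity of $w_t$, which gives $|(w_t)_{11}|=1+O(t^2)$, and (iii) the normalisation $\trace(u_t)\ge0$ built into Proposition~\ref{prop:rigid} together with differentiability of $u_tw_t$, to control $\arg((w_t)_{11})$ to first order; none of this appears in your sketch.

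The paper sidesteps the issue by never differentiating $u_t$ or $w_t$ at all: it sets $b=\lim_{t\to0}\tfrac1t(w_t-(w_t)_{11}I)$, which automatically has $b_{11}=0$, and then simply defines $a:=\alpha_I'(0)-b$, so that $\alpha_x'(0)=\alpha_I'(0)x+[x,b]=ax+xb$. The price of this shortcut is that skewness of $a$ and $b$ no longer drops out of differentiating $u_t^*u_t=I$ as in your plan; the paper instead uses the reversed products $\beta_x(t)=w_txu_t$, the hypothesis $x^*\in X$, the fact that $X'=\bF I$, and the one-sided derivatives $\phi_\pm$ of the norm at $I$ to obtain $b+b^*=0$ and then $a+a^*=0$. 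Your remaining steps---the gauge shift $(a,b)\mapsto(a+b_{11}I,\,b-b_{11}I)$ to arrange $b\in\Skew_n^0(\bF)$, and the uniqueness argument, which reduces to $\Skew_n^0(\bF)\cap\bF I=\{0\}$---are fine in outline, but as proposed the proof does not go through without repairing the differentiability step.
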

\begin{proof}
Choose some $\alpha\in \R(X,\|\cdot\|)$ which induces~$\eta$ and consider a neighbourhood~$T$ of~$0$ and maps $u,w: T\to \U_n(\bF)$, $u(t)=u_t$ and $w(t)=w_t$ and $c:T\to X$ as in Proposition~\ref{prop:rigid}.
Note in particular that these maps are continuous at $t=0$, with $u_0=w_0=I$ and $c(0)=0$, and  for all $x\in X$, the restriction of~$\alpha_x$ to~$T$ is given by
  \[ \alpha_x(t)=u_txw_t+c(t)\]
  and this restriction is differentiable at $t=0$. 

Suppose first that $c(t)=0$ for all $t\in T$.  

Consider the map
\[\delta_r : X\to \M_n(\bF),\qquad
  \delta_r(x)=\alpha_x'(0)-\alpha_I'(0)x.\] Note that for each
$x\in X$, we have
\[\delta_r(x)= \lim_{t\to 0} \frac{u_txw_t-x-(u_tw_t-I)x}t =
    \lim_{t\to 0} \tfrac1t u_t [x,w_t].\]
Since $u_t^*\to I$ as $t\to 0$, we have		
\[ \delta_r(x)= \lim_{t\to 0} \tfrac1t u_t^*u_t [x,w_t] = \lim_{t\to 0} \tfrac1t [x,w_t]. \]
  Observe that if $s\in \M_n(\bF)$ has $s_{11}=0$, then for any $\lambda\in\bF$ and
  $1\leq i,j\leq n$, the $(i,j)$ entry of $s$ is given by
  \[ s_{ij}=
    \begin{cases}
      [e_{ii},s]_{ij}&\text{ if $i\ne j$},\\
      [e_{1i}+\lambda e_{i1},s]_{1i}&\text{ if $i=j$}.
    \end{cases}
  \]
  For $0\ne t\in T$, let $b_t=t^{-1}(w_t-(w_t)_{11}I)$, so that
  $\delta_r(x)=\lim_{t\to 0}[x,b_t]$ for $x\in X$ and the $(1,1)$
  entry of~$b_t$ is~$0$. Since~$X$ is admissible, the preceding
  observation shows that $b_t$ is entrywise convergent, say $b_t\to b$
  as $t\to 0$, hence $\delta_r(x)=[x,b]$ for each~$x\in X$.  Note that the $(1,1)$ entry of~$b$ is~$0$. Let
  $a=\alpha_I'(0)-b$; then
  \[ \alpha_x'(0)=\alpha_I'(0)x+\delta_r(x) = ax+xb,\quad x\in X. \]
  
  For each~$x\in X$, consider the map \[\beta_x:T\to \M_n(\bF),\quad \beta_x(t)=w_txu_t.\] 
  For $t\in T$, we have
  \begin{equation*}
    \beta_x(t)-\beta_x(0)= w_txu_t-x=w_t(x-w_t^*xu_t^*)u_t=w_t(x^*-u_tx^*w_t)^*u_t,
  \end{equation*}
  so by the continuity of $u_t$ and $w_t$ at $t=0$, we have
  \begin{align*} \beta_x'(0)&=\lim_{t\to 0}\frac{\beta_x(t)-\beta_x(0)}t=\lim_{t\to 0}w_t\left(\frac{x^*-u_tx^*w_t}t\right)^*u_t
    \\&= -\alpha_{x^*}'(0)^*=
    -b^*x-xa^*.
  \end{align*}

  Now 
  \begin{align*}
    xb-bx&=\delta_r(x)=\lim_{t\to 0}\tfrac1t[x,w_t]=\lim_{t\to 0}\tfrac1t [x,w_t]u_t
           \\ &= \lim_{t\to 0}\tfrac1t(xw_t-w_tx)u_t
                = \lim_{t\to 0}\tfrac1tx(w_tu_t-I)-\tfrac1t(w_txu_t-x)\\
    &=x\beta_I'(0)-\beta_x'(0) = -x(a^*+b^*)+(b^*x+xa^*)
      \\&= b^*x-xb^*
  \end{align*}
  so $x(b+b^*)=(b+b^*)x$ for all $x\in X$, so $b+b^*\in
  X'=\bF I$. Since $b_{11}=0$, we have $b+b^*=0$.
	
  Define $\delta_\ell(x)=\alpha'_x(0)-x\alpha'_I(0)$. We know that
  $\alpha'_x(0)=ax+xb$, so $\delta_\ell(x)=ax+xb-x(a+b)=[a,x]$. A
  similar computation to the one above for $\delta_r$ yields
  $\delta_\ell(x)=\beta'_I(0)x-\beta'_x(0)$. It follows that
  $a+a^*\in \bF I$, and hence that $a+a^*=\lambda I$ for some
  $\lambda\in \bR$.
  
  \let\phi\varphi
  Now consider the maps $\phi_+,\phi_-\colon \M_n(\bF)\to \bR$ given by the one-sided limits
  \[ \phi_{\pm}(x)=\lim_{t\to
    0^{\pm}}\frac{\unorm{I+tx}-\unorm{I}}t,\qquad x\in X.\]
  These limits are well defined (see, for
  example,~\cite[Theorem~23.1]{rockafellar}); moreover, $\phi_+$ is sub-additive and
  $\phi_-$ is super-additive, and $\phi_{\pm}(\alpha I)=\alpha \|I\|$
  for any~$\alpha\in\bR$. Note that
  \[ \alpha_I'(0)=a+b\qand \unorm{I}=\unorm{\alpha_I(t)}\text{ for any~$t\in \bR$}.\]
  It follows that $\phi_{\pm}(a+b)=0$,  since
  \begin{align*}
    \left|\frac{\unorm{I+t(a+b)}-\unorm{I}}t\right|&=
    \left|\frac{\unorm{\alpha_I(0)+t\alpha_I'(0)}-\unorm{\alpha_I(t)}}t\right|\\&\leq
    \left\unorm{\frac{\alpha_I(t)-\alpha_I(0)}t - \alpha_I'(0)\right}\to 0 \text{ as $t\to 0$.}
  \end{align*}
  The conjugate transpose is isometric for the unitarily
  invariant norm~$\unorm\cdot$, so
  $\phi_{\pm}(x^*)=\phi_{\pm}(x)$ for any~$x\in X$. Hence
  $\phi_{\pm}(a^*+b^*)=0$. Since $b+b^*=0$, we have 
  \[\lambda I=a+a^*=a+b+a^*+b^*.\]
  Applying $\phi_+$ and using sub-additivity, we obtain 
  \[ \lambda\|I\|=\phi_+(a+b+a^*+b^*)\leq \phi_+(a+b)+\phi_+(a^*+b^*)=0.\]
  Applying $\phi_-$ similarly, we obtain the converse inequality, so $\lambda=0$. 
  Thus
  $a$ and $b$ are skew-hermitian, with
  $b\in \Skew_n^0(\bF)$.

  For uniqueness, if $(a',b')\in \Skew_n(\bF)\times \Skew_n^0(\bF)$
  with $ax+xb=a'x+xb'$ for every~$x\in X$, then $a''x+xb''=0$ where
  $a''=a-a'$ and $b''=b-b'$. Setting $x=I$ gives $b''=-a''$, so
  $b''\in \Skew_n^0(\bF)\cap X'=\Skew_n^0(\bF)\cap \bF I =
  \{0\}$, so $a''=b''=0$.

  Finally, if $c(t)$ is not identically zero then applying the above argument to the rigid motion obtained by replacing $\alpha_x(t)$ with $\alpha_x(t)-c(t)$ for each $x\in X$, we obtain  $\alpha_x'(0)=ax+xb+c$ for some unique $(a,b)\in \Skew_n(\bF)\times \Skew_n^0(\bF)$ and where $c=c'(0)\in X$.
\end{proof}

In the case of admissible spaces of the form $(\H_n(\bF),\|\cdot\|)$ we obtain the following refinement of Theorem \ref{thm:skew}.

\begin{corollary}
\label{cor:skew}
Let $\unorm\cdot$ be a unitarily invariant norm on~$\M_n(\bF)$. If $(\H_n(\bF),\unorm\cdot)$ is admissible and  $\eta\in \T(\H_n(\bF),\unorm\cdot)$, 
then there exist 
 unique matrices $a\in \Skew_n^0(\bF)$  and $c\in \H_n(\bF)$ so that
  \[\alpha_x'(0)=ax-xa+c, \quad \forall\, x\in \H_n(\bF).\] 
\end{corollary}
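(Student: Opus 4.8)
The plan is to derive this as a direct specialization of Theorem~\ref{thm:skew} to the case $X=\H_n(\bF)$, using the self-adjointness of $\H_n(\bF)$ to collapse the two skew-hermitian parameters $a,b$ into one. First I would apply Theorem~\ref{thm:skew} to obtain unique $a'\in\Skew_n(\bF)$, $b'\in\Skew_n^0(\bF)$ and $c'\in\H_n(\bF)$ with $\eta(x)=a'x+xb'+c'$ for all $x\in\H_n(\bF)$. The key observation is that $\eta$ must map $\H_n(\bF)$ into itself: since $\eta$ is induced by a rigid motion $\alpha$ with $\alpha_x(t)=u_txw_t+c(t)$ near $t=0$, and each $\alpha_x(t)$ lies in $X=\H_n(\bF)$, differentiating at $t=0$ forces $\eta(x)\in\H_n(\bF)$. (Alternatively this already follows from $\eta\in\T(\H_n(\bF),\unorm\cdot)\subseteq \H_n(\bF)^{\H_n(\bF)}$.)

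Next I would exploit this self-adjointness. For $x\in\H_n(\bF)$ we have $x=x^*$ and $\eta(x)=\eta(x)^*$, so
\[
  a'x+xb'+c' = (a'x+xb'+c')^* = x^*(a')^* + (b')^*x^* + (c')^* = -xa' - b'x + c',
\]
using $a'\in\Skew_n$, $b'\in\Skew_n^0\subseteq\Skew_n$, and $c'\in\H_n$. Hence $a'x+xb' = -b'x - xa'$, i.e. $(a'+b')x = -x(a'+b') = x(-(a'+b'))$ for all $x\in\H_n(\bF)$. Since $\H_n(\bF)$ contains the matrix units $e_{ii}$ and the symmetric combinations $e_{ij}+e_{ji}$ (indeed $\H_n(\bF)'=\bF I$, as $\H_n(\bF)\supseteq S$ in the notation of Remark~\ref{remark:large}), the matrix $a'+b'$ both commutes and anticommutes with everything in $\H_n(\bF)$, which forces $a'+b' = -(a'+b')$, so $a'+b'=0$, i.e. $b'=-a'$. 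Setting $a:=-b'=a'\in\Skew_n^0(\bF)$ and $c:=c'$ gives $\eta(x)=ax-xa+c$ as desired; note $a=a'$ lies in $\Skew_n(\bF)$ and $a=-b'$ lies in $\Skew_n^0(\bF)$, so indeed $a\in\Skew_n^0(\bF)$.

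For uniqueness, suppose $ax-xa+c = a_1x-xa_1+c_1$ for all $x\in\H_n(\bF)$ with $a,a_1\in\Skew_n^0(\bF)$ and $c,c_1\in\H_n(\bF)$. Setting $x=I\in\H_n(\bF)$ gives $c=c_1$, whence $(a-a_1)x = x(a-a_1)$ for all $x\in\H_n(\bF)$, so $a-a_1\in\H_n(\bF)'=\bF I$; combined with $a-a_1\in\Skew_n^0(\bF)$ and $\Skew_n^0(\bF)\cap\bF I=\{0\}$, this yields $a=a_1$. I do not anticipate a serious obstacle here: the only mild subtlety is checking that the hypotheses ensure $I\in\H_n(\bF)$ (true, since $I$ is hermitian) so that the substitution $x=I$ is legitimate, and confirming $a\in\Skew_n^0(\bF)$ rather than merely $\Skew_n(\bF)$, which follows from the identification $a=-b'$ with $b'\in\Skew_n^0(\bF)$.
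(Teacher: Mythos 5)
Your proposal is correct and follows essentially the same route as the paper: apply Theorem~\ref{thm:skew} with $X=\H_n(\bF)$ and use the fact that $\eta$ maps $\H_n(\bF)$ into itself to force $a'+b'=0$ (the paper does this in one line by observing $a+b=\alpha_I'(0)-c\in\H_n(\bF)\cap\Skew_n(\bF)=\{0\}$). One small slip: you assert that $a'+b'$ ``both commutes and anticommutes'' with everything in $\H_n(\bF)$, whereas you only established the anticommutation $(a'+b')x=-x(a'+b')$; this is harmless, since setting $x=I$ there already yields $a'+b'=0$.
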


\begin{proof}
  Applying Theorem~\ref{thm:skew} with $X=\H_n(\bF)$ to obtain $a\in \Skew_n(\bF)$, $b\in \Skew_n^0(\bF)$ and
  $c\in \H_n(\bF)$, we observe that \[\alpha'_I(0)-c=a+b\in
  \H_n(\bF)\cap \Skew_n(\bF)=\{0\},\] so $b=-a$.
\end{proof}

\subsection{The dimension of $\T(X,\|\cdot\|)$}\label{subsec:dim-T}
Let $(X,\|\cdot\|)$ be an admissible matrix space.
By Theorem~\ref{thm:skew},  there is a well-defined map
\[\Psi_X:\T(X,\|\cdot\|)\to \Skew_n(\bF)\oplus \Skew_n^0(\bF)\oplus\M_n(\bF),\]
 with the property that $\Psi_X(\eta)=(a,b,c)$ if and only if $\eta(x)=ax+xb+c$ for all $x\in X$.

\begin{lemma}
\label{lem:iso}
The map $\Psi_X$ is injective and linear. Moreover, if $X=\M_n(\bF)$ then  $\Psi_{X}$ is a  linear isomorphism.
\end{lemma}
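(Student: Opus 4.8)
The plan is to verify injectivity and linearity of $\Psi_X$ directly from its defining property, and then to establish surjectivity in the case $X=\M_n(\bF)$ by exhibiting, for each triple $(a,b,c)$, a genuine rigid motion whose induced infinitesimal rigid motion is $x\mapsto ax+xb+c$. Linearity is immediate: if $\eta_i(x)=a_ix+xb_i+c_i$ for $i=1,2$, then $(\eta_1+\lambda\eta_2)(x)=(a_1+\lambda a_2)x+x(b_1+\lambda b_2)+(c_1+\lambda c_2)$, and since $\Skew_n(\bF)$, $\Skew_n^0(\bF)$ are real-linear subspaces and $c_i\in X$ with $X$ real-linear, this triple lies in the codomain and equals $\Psi_X(\eta_1)+\lambda\Psi_X(\eta_2)$; one should note that $\T(X,\unorm\cdot)$ is itself a real-linear space (stated in the text) so this makes sense. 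Injectivity is essentially the uniqueness clause already proved inside Theorem~\ref{thm:skew}: if $\Psi_X(\eta)=(0,0,0)$ then $\eta(x)=0$ for all $x\in X$, so $\eta$ is the zero vector field; equivalently, the uniqueness of $(a,b,c)$ asserted in Theorem~\ref{thm:skew} says precisely that $\Psi_X$ is well-defined as a function and has trivial kernel.

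For the isomorphism statement with $X=\M_n(\bF)$, the remaining task is surjectivity. Given $a\in\Skew_n(\bF)$, $b\in\Skew_n^0(\bF)$ and $c\in\M_n(\bF)$, the natural candidate is the one-parameter family
\[
\alpha_x(t)=e^{ta}\,x\,e^{tb}+tc,\qquad x\in\M_n(\bF),\ t\in[-1,1].
\]
Since $a$ and $b$ are skew-hermitian, $e^{ta}$ and $e^{tb}$ are unitary for every real $t$, so by unitary invariance of the norm,
\[
\unorm{\alpha_x(t)-\alpha_y(t)}=\unorm{e^{ta}(x-y)e^{tb}}=\unorm{x-y},
\]
verifying property~(c) of a rigid motion; properties~(a), (b) are clear, and $\tfrac{d}{dt}\big|_{t=0}\alpha_x(t)=ax+xb+c$ by the product rule, using $\tfrac{d}{dt}\big|_{t=0}e^{ta}=a$ and similarly for $b$. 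Hence $x\mapsto ax+xb+c$ lies in $\T(\M_n(\bF),\unorm\cdot)$ and is mapped by $\Psi_{\M_n(\bF)}$ to $(a,b,c)$, giving surjectivity. Combined with injectivity and linearity, $\Psi_{\M_n(\bF)}$ is a linear isomorphism.

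The only genuine subtlety — and the place to be careful — is that $\alpha_x$ must genuinely be a path into $X$, which is automatic when $X=\M_n(\bF)$ but would fail for a proper subspace (e.g.\ $\H_n(\bF)$), since $e^{ta}xe^{tb}$ need not remain in $X$; this is exactly why the isomorphism is only claimed for $X=\M_n(\bF)$, while for general admissible $X$ the map $\Psi_X$ is merely an injection. No compactness or limiting argument is needed here — all the heavy lifting (the form $ax+xb+c$, the skew-hermitian conditions, uniqueness) was already carried out in Theorem~\ref{thm:skew} and Proposition~\ref{prop:rigid}, so this lemma is a short bookkeeping consequence.
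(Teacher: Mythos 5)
Your proposal is correct and follows essentially the same route as the paper: the paper likewise dismisses linearity and injectivity as routine (injectivity is immediate since $\Psi_X(\eta)=(a,b,c)$ forces $\eta(x)=ax+xb+c$, determining $\eta$ as a function), and proves surjectivity for $X=\M_n(\bF)$ with exactly the rigid motion $\alpha_x(t)=e^{ta}xe^{tb}+tc$. Your closing remark about why this construction fails to stay inside a proper subspace such as $\H_n(\bF)$ is a correct and worthwhile observation, consistent with the paper's separate treatment of $\range\Psi_{\H_n(\bF)}$ in Lemma~\ref{lem:range}.
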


\begin{proof}
That   $\Psi_X$ is injective and linear is a routine verification.
Suppose $X=\M_n(\bF)$. Then it only remains to prove surjectivity.
Let $(a,b,c)$ be in the codomain of $\Psi$, and for each $x\in \M_n(\bF)$ define
\[\alpha_x:[-1,1]\to \M_n(\bF), \quad \alpha_x(t) = e^{ta}xe^{tb}+tc.\]
Since $a$ and $b$ are skew-hermitian, $e^{ta}$ and $e^{tb}$ are unitary for every~$t\in \bR$, so 
$\{\alpha_x:[-1,1]\to \M_n(\bF)\}_{x\in \M_n(\bF)}$ is a rigid motion of $(\M_n(\bF),\|\cdot\|)$.
Differentiating, we see that the induced infinitesimal rigid motion is the vector field
\[\eta:X\to X,\,\,\,\,\,x\mapsto ax+xb+c.\]
Thus $\Psi(\eta)=(a,b,c)$ and so $\Psi$ is surjective.
\end{proof}

Here and below, we write $\dim Z$ for the real-linear dimension of a real-linear vector subspace~$Z$ of~$\M_n(\bF)$
or $\M_n(\bF)^{\M_n(\bF)}$.

\begin{proposition}
\label{prop:dim1}
If $(\M_n(\bF),\|\cdot\|)$ is an admissible matrix space, then
\[ \dim \T(\M_n(\bF),\|\cdot\|)=
\begin{cases}
  2n^2-n&\text{if $\bF=\bR$,}\\
  4n^2-1&\text{if $\bF=\bC$.}
\end{cases}\]
\end{proposition}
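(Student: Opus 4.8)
The plan is to compute $\dim\T(\M_n(\bF),\|\cdot\|)$ via the linear isomorphism $\Psi_{\M_n(\bF)}$ from Lemma~\ref{lem:iso}, since that lemma reduces the problem to computing the real-linear dimension of the codomain $\Skew_n(\bF)\oplus\Skew_n^0(\bF)\oplus\M_n(\bF)$. Thus $\dim\T(\M_n(\bF),\|\cdot\|)=\dim\Skew_n(\bF)+\dim\Skew_n^0(\bF)+\dim_{\bR}\M_n(\bF)$, and it remains only to evaluate these three summands in each of the cases $\bF=\bR$ and $\bF=\bC$.

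For the real case: $\dim_{\bR}\M_n(\bR)=n^2$, and $\Skew_n(\bR)$ is the space of real skew-symmetric matrices, which has dimension $\binom n2=\tfrac12 n(n-1)$; since $\Skew_n^0(\bR)=\Skew_n(\bR)$ (as noted in the Preliminaries, a real skew-symmetric matrix automatically has a zero $(1,1)$ entry), this also contributes $\tfrac12 n(n-1)$. Adding, $n^2+n(n-1)=2n^2-n$, as required. For the complex case: $\dim_{\bR}\M_n(\bC)=2n^2$, and $\Skew_n(\bC)$, the real vector space of complex skew-hermitian matrices, has real dimension $n^2$ (the diagonal entries are purely imaginary, contributing $n$; the $\binom n2$ strictly-upper entries are arbitrary complex numbers with the lower ones determined, contributing $2\binom n2=n^2-n$). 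The space $\Skew_n^0(\bC)$ is cut out of $\Skew_n(\bC)$ by the single real-linear condition that the $(1,1)$ entry vanishes (the $(1,1)$ entry of a skew-hermitian matrix is already purely imaginary, so this is one real constraint), giving real dimension $n^2-1$. Summing: $2n^2+n^2+(n^2-1)=4n^2-1$, as claimed.

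I do not anticipate a genuine obstacle here: the entire content is the dimension bookkeeping for $\Skew_n(\bF)$ and $\Skew_n^0(\bF)$, together with correctly invoking the isomorphism $\Psi_{\M_n(\bF)}$ supplied by Lemma~\ref{lem:iso}. The one point requiring a modicum of care is the real-linear dimension of $\Skew_n(\bC)$ and the effect of imposing a zero $(1,1)$ entry: one must remember that the diagonal of a skew-hermitian matrix is purely imaginary, so killing the $(1,1)$ entry removes exactly one real dimension rather than two; this is the only place a careless count could go wrong.
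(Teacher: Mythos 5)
Your proof is correct and follows essentially the same route as the paper: both invoke the isomorphism $\Psi_{\M_n(\bF)}$ from Lemma~\ref{lem:iso} and then compute the real dimensions of $\Skew_n(\bF)$, $\Skew_n^0(\bF)$ and $\M_n(\bF)$ in each case. Your extra care about the purely imaginary diagonal of a skew-hermitian matrix (so that the $(1,1)$-vanishing condition costs exactly one real dimension) is exactly the point the paper's count relies on.
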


\begin{proof}
By Lemma \ref{lem:iso}, $\Psi_{\M_n(\bF)}$ is a linear isomorphism. If $\bF=\bR$, then
 \begin{align*}
  \dim \T(\M_n(\bR),\|\cdot\|) &= \dim (\Skew_n(\bR)\oplus \Skew_n(\bR)\oplus \M_n(\bR)) \\
 &= \frac{n(n-1)}{2}+\frac{n(n-1)}{2} +n^2 \\
 &= 2n^2-n.
 \end{align*}
If $\bF=\bC$, then
 \begin{align*}
 \dim \T(\M_n(\bC),\|\cdot\|) &=  \dim (\Skew_n(\bC)\oplus \Skew_n^0(\bC)\oplus \M_n(\bC)) \\
 &= n^2-1+n^2+2n^2 \\
 &= 4n^2-1.\qedhere
 \end{align*}
\end{proof}

We now compute the dimension of the space of
infinitesimal rigid motions for admissible matrix
spaces of the form~$(\H_n(\bF),\|\cdot\|)$. 

\begin{lemma}
\label{lem:range}
The range of $\Psi_{\H_n(\bF)}$ is 
\[\range \Psi_{\H_n(\bF)} = \{(a,-a,c)\colon (a,c)\in \Skew_n^0(\bF)\oplus \H_n(\bF)\}.\]
\end{lemma}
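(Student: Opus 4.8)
The plan is to prove the two inclusions separately, using Corollary~\ref{cor:skew} for one direction and an explicit rigid motion for the other. For the inclusion $\range\Psi_{\H_n(\bF)}\subseteq\{(a,-a,c)\colon(a,c)\in\Skew_n^0(\bF)\oplus\H_n(\bF)\}$, I would take $\eta\in\T(\H_n(\bF),\unorm\cdot)$ and apply Corollary~\ref{cor:skew}: there exist $a\in\Skew_n^0(\bF)$ and $c\in\H_n(\bF)$ with $\eta(x)=ax-xa+c$ for all $x\in\H_n(\bF)$. By the definition of $\Psi_{\H_n(\bF)}$, this means $\Psi_{\H_n(\bF)}(\eta)=(a,-a,c)$, which is of the required form. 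So this containment is essentially immediate from the corollary.

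For the reverse inclusion, I would fix $(a,c)\in\Skew_n^0(\bF)\oplus\H_n(\bF)$ and exhibit a rigid motion of $(\H_n(\bF),\unorm\cdot)$ whose induced infinitesimal rigid motion $\eta$ satisfies $\Psi_{\H_n(\bF)}(\eta)=(a,-a,c)$. The natural candidate, mirroring the construction in Lemma~\ref{lem:iso}, is
\[
\alpha_x\colon[-1,1]\to\H_n(\bF),\qquad \alpha_x(t)=e^{ta}xe^{-ta}+tc.
\]
The key points to verify are: (i) this is well defined, i.e.\ $e^{ta}xe^{-ta}+tc\in\H_n(\bF)$ — which holds because $a$ skew-hermitian makes $e^{ta}$ unitary with $(e^{ta})^*=e^{-ta}$, so $e^{ta}xe^{-ta}$ is hermitian when $x$ is, and $tc\in\H_n(\bF)$; (ii) it is a rigid motion — properties (a) and (b) are clear, and property (c) follows from unitary invariance: $\unorm{\alpha_x(t)-\alpha_y(t)}=\unorm{e^{ta}(x-y)e^{-ta}}=\unorm{x-y}$; and (iii) differentiating at $t=0$ gives $\eta(x)=\alpha_x'(0)=ax-xa+c$, so $\Psi_{\H_n(\bF)}(\eta)=(a,-a,c)$. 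Combining the two inclusions yields the claimed equality.

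I do not anticipate any serious obstacle here; the statement is a straightforward consequence of Corollary~\ref{cor:skew} together with a routine verification that the exponential conjugation path is a rigid motion. The only mild subtlety is keeping track of the constraint $a\in\Skew_n^0(\bF)$ (the zero $(1,1)$-entry normalisation) consistently on both sides, but this is already built into the statement of Corollary~\ref{cor:skew}, so it causes no difficulty. If anything, the "hard part" is purely bookkeeping: making sure that the map $\Psi_{\H_n(\bF)}$ — which was defined in Subsection~\ref{subsec:dim-T} as the restriction to $\T(\H_n(\bF),\unorm\cdot)$ of a map into $\Skew_n(\bF)\oplus\Skew_n^0(\bF)\oplus\M_n(\bF)$ — indeed records $(a,b,c)$ with $b=-a$ here, which is exactly the content of Corollary~\ref{cor:skew}.
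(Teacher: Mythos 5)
Your proposal is correct and follows essentially the same route as the paper: the forward inclusion is immediate from Corollary~\ref{cor:skew}, and the reverse inclusion is obtained from the rigid motion $\alpha_x(t)=e^{ta}xe^{-ta}+tc$, exactly as in the paper's proof. Your extra verifications (that the path stays in $\H_n(\bF)$ and that unitary invariance gives the isometry property) are correct and merely spell out what the paper leaves implicit.
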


\begin{proof}
By Corollary \ref{cor:skew}, if $(a,b,c)$ is an element of the range of  $\Psi_{\H_n(\bF)}$ then $b=-a$ and $c\in \H_n(\bF)$.
For the reverse inclusion,
let $a\in \Skew_n^0(\bF)$, let $c\in\H_n(\bF)$, and for each $x\in \H_n(\bF)$ define
\[\alpha_x:[-1,1]\to \H_n(\bF), \quad \alpha_x(t) = e^{ta}xe^{-ta}+tc.\]
Then $\{\alpha_x:[-1,1]\to \H_n(\bF)\}_{x\in \H_n(\bF)}$ is a rigid motion of $(\H_n(\bF),\|\cdot\|)$.
The induced infinitesimal rigid motion is the vector field
\[\eta:\H_n(\bF)\to \H_n(\bF),\,\,\,\,\,x\mapsto ax-xa+c.\]
Thus $\Psi_{\H_n(\bF)}(\eta)=(a,-a,c)$ and so $(a,-a,c)$ is contained in the range of $\Psi_{\H_n(\bF)}$.
\end{proof}

\begin{proposition}
\label{prop:dim2}
If $(\H_n(\bF),\|\cdot\|)$ is an admissible matrix space, then
\[ \dim \T(\H_n(\bF),\|\cdot\|)=
\begin{cases}
  n^2&\text{if $\bF=\bR$,}\\
  2n^2-1&\text{if $\bF=\bC$.}
\end{cases}\]
\end{proposition}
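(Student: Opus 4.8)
The plan is to combine Corollary~\ref{cor:skew}, Lemma~\ref{lem:range} and the injectivity and linearity of $\Psi_{\H_n(\bF)}$ (Lemma~\ref{lem:iso}) to reduce the computation of $\dim\T(\H_n(\bF),\|\cdot\|)$ to a dimension count for the subspace
\[\range\Psi_{\H_n(\bF)}=\{(a,-a,c)\colon(a,c)\in\Skew_n^0(\bF)\oplus\H_n(\bF)\}\]
of $\Skew_n(\bF)\oplus\Skew_n^0(\bF)\oplus\M_n(\bF)$. Since $\Psi_{\H_n(\bF)}$ is an injective linear map onto this range, its domain $\T(\H_n(\bF),\|\cdot\|)$ has the same dimension.

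First I would observe that the map $(a,c)\mapsto(a,-a,c)$ is a linear isomorphism from $\Skew_n^0(\bF)\oplus\H_n(\bF)$ onto $\range\Psi_{\H_n(\bF)}$, so it suffices to compute $\dim\Skew_n^0(\bF)+\dim\H_n(\bF)$. For $\bF=\bR$ we have $\Skew_n^0(\bR)=\Skew_n(\bR)$, of real dimension $\tfrac{n(n-1)}{2}$, while $\H_n(\bR)$ (the real symmetric matrices) has real dimension $\tfrac{n(n+1)}{2}$; adding these gives $n^2$. For $\bF=\bC$, the space $\H_n(\bC)$ of hermitian matrices has real dimension $n^2$, and $\Skew_n^0(\bC)$ consists of skew-hermitian matrices with a zero $(1,1)$ entry: the full space $\Skew_n(\bC)=i\H_n(\bC)$ has real dimension $n^2$, and imposing that the $(1,1)$ entry (which is automatically purely imaginary for a skew-hermitian matrix) vanishes removes one real degree of freedom, giving $\dim\Skew_n^0(\bC)=n^2-1$; adding these gives $2n^2-1$.

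Assembling the pieces:
\begin{align*}
\dim\T(\H_n(\bR),\|\cdot\|)&=\tfrac{n(n-1)}{2}+\tfrac{n(n+1)}{2}=n^2,\\
\dim\T(\H_n(\bC),\|\cdot\|)&=(n^2-1)+n^2=2n^2-1,
\end{align*}
which is the claimed formula. There is essentially no obstacle here: the only point requiring a moment's care is the real-dimension count for $\Skew_n^0(\bC)$, where one must remember that a skew-hermitian matrix has purely imaginary diagonal entries, so the constraint $(a)_{11}=0$ is a single real linear condition rather than two.
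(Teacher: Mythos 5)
Your proof is correct and follows essentially the same route as the paper: identify $\T(\H_n(\bF),\|\cdot\|)$ with $\range\Psi_{\H_n(\bF)}\cong\Skew_n^0(\bF)\oplus\H_n(\bF)$ via Lemmas~\ref{lem:iso} and~\ref{lem:range}, then count real dimensions. Your explicit care over $\dim\Skew_n^0(\bC)=n^2-1$ (one real condition, since the diagonal entries of a skew-hermitian matrix are purely imaginary) is exactly the point the paper leaves implicit.
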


\begin{proof}
By Lemma \ref{lem:iso}, $\Psi_{\H_n(\bF)}$ is a linear isomorphism onto its range. Thus by
Lemma \ref{lem:range} we have 
$\dim(\T(\H_n(\bF),\|\cdot\|))=
\dim (\Skew_n^0(\bF)\oplus \H_n(\bF))$, which gives the advertised values.
\end{proof}

\section{Infinitesimal rigidity for admissible matrix spaces}
\label{GraphRigidity}

In this section we develop infinitesimal rigidity theory for admissible matrix spaces. Our primary goal is to obtain necessary counting conditions for graphs which admit an infinitesimally rigid placement in a given admissible matrix space. This is achieved in Theorem~\ref{thm:maxwell}, where we provide analogues of the Maxwell counting criteria for Euclidean bar-joint frameworks~\cite{maxwell}, and in Theorem~\ref{thm:tight}, where we show that minimally rigid graphs belong to the  matroidal class of $(k,l)$-sparse graphs for suitable $k$ and $l$ (see~\cite{lee-str}).  

Throughout this section $X$ will be a finite dimensional real linear space and $G=(V,E)$ will be a finite simple graph. A \emph{bar-joint framework} in  $X$ is a pair  $(G,p)$  consisting of a graph $G$ and a map $p:V\to X$, $v\mapsto p_v$, called a \emph{placement} of~$G$ in~$X$,  with the property that $p_v\not= p_w$ for all $vw\in E$. A \emph{subframework} of $(G,p)$ is a bar-joint framework $(H,p_H)$ with $H=(V(H),E(H))$ a subgraph of $G$ and $p_H(v)=p(v)$ for all $v\in V(H)$. 

\subsection{Support functionals}
Recall that if~$\|\cdot\|$ is a norm on $X$, then a \emph{support functional} for a unit vector $x_0\in X$ is a linear functional $f:X\to \bR$ with $\|f\|:=\sup\{|f(x)|\colon x\in X,\|x\|=1\}\leq 1$, and $f(x_0) =1$. The norm~$\|\cdot\|$ is said to be \emph{smooth} at $x\in X\setminus\{0\}$ if there exists exactly one support functional at $\frac{x}{\|x\|}$, and we say that~$\|\cdot\|$ is \emph{smooth} if it is smooth at every $x\in X\setminus\{0\}$.

We will require the following facts (for details see~\cite[Section~2]{kit-sch}).

\begin{lemma}
\label{lem:smooth}
Let $(G,p)$ be a bar-joint framework in a normed linear space $(X,\|\cdot\|)$, let $vw\in E$ and let $p_0=\frac{p_v-p_w}{\|p_v-p_w\|}$.
\begin{enumerate}[(i)]
\item 
The norm~$\|\cdot\|$ is smooth at $p_v-p_w$ if and only if the limit 
\begin{equation}
\label{eq:support-functional-limit}
\varphi_{v,w}(x) := \lim_{t\to 0}\frac{1}{t}(\|p_0+tx\|-\|p_0\|)
\end{equation}
exists for all $x\in X$.
\item
If the norm is smooth at $p_v-p_w$, then the map $\varphi_{v,w}:X\to \bR$ is the unique support functional for~$p_0$.
\end{enumerate}
\end{lemma}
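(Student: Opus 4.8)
The statement to prove is Lemma~\ref{lem:smooth}, which relates smoothness of a norm at a point $p_v - p_w$ to the existence of the directional-derivative limit $\varphi_{v,w}$ and its identification as the unique support functional.

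\textbf{Overall approach.} The plan is to reduce everything to standard convex-analytic facts about the function $t \mapsto \|p_0 + tx\|$ for a fixed unit vector $p_0$, exactly as in~\cite[Section~2]{kit-sch}. Since the norm $\|\cdot\|$ is a finite convex function on the finite-dimensional space $X$, for each $x \in X$ the one-sided limits
\[
\varphi^+_{v,w}(x) = \lim_{t \to 0^+} \tfrac1t\bigl(\|p_0 + tx\| - \|p_0\|\bigr),
\qquad
\varphi^-_{v,w}(x) = \lim_{t \to 0^-} \tfrac1t\bigl(\|p_0 + tx\| - \|p_0\|\bigr)
\]
always exist (monotone difference quotients of a convex function; see~\cite[Theorem~23.1]{rockafellar}), with $\varphi^-_{v,w}(x) \le \varphi^+_{v,w}(x)$, and the two-sided limit $\varphi_{v,w}(x)$ exists precisely when $\varphi^+_{v,w}(x) = \varphi^-_{v,w}(x)$. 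The connection to support functionals is the classical identity that the subdifferential $\partial\|\cdot\|(p_0)$ coincides with the set of support functionals at $p_0$, and that $f \in \partial\|\cdot\|(p_0)$ if and only if $\varphi^-_{v,w}(x) \le f(x) \le \varphi^+_{v,w}(x)$ for all $x \in X$; moreover $\varphi^+_{v,w}$ is exactly the support function of $\partial\|\cdot\|(p_0)$, i.e. $\varphi^+_{v,w}(x) = \max\{f(x) : f \in \partial\|\cdot\|(p_0)\}$, and likewise $\varphi^-_{v,w}(x) = \min\{f(x): f\in\partial\|\cdot\|(p_0)\}$.

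\textbf{Step-by-step.} First I would record that smoothness of $\|\cdot\|$ at $p_v - p_w$ is by definition the uniqueness of the support functional at $p_0 = (p_v-p_w)/\|p_v-p_w\|$, equivalently $|\partial\|\cdot\|(p_0)| = 1$ (using that the support functionals at $p_0$ form the nonempty compact convex set $\partial\|\cdot\|(p_0)$, nonempty by Hahn--Banach / the supporting hyperplane theorem). For part~(i): if $\partial\|\cdot\|(p_0) = \{f\}$ is a singleton, then $\varphi^+_{v,w}(x) = f(x) = \varphi^-_{v,w}(x)$ for every $x$, so the two-sided limit exists everywhere and equals $f(x)$. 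Conversely, if $\partial\|\cdot\|(p_0)$ contains two distinct functionals $f \ne g$, pick $x$ with $f(x) \ne g(x)$; then $\varphi^+_{v,w}(x) = \max\{f(x),g(x),\dots\} > \min\{f(x),g(x),\dots\} = \varphi^-_{v,w}(x)$ (using the max/min-over-subdifferential description of the one-sided derivatives), so the limit in~\eqref{eq:support-functional-limit} fails to exist at that $x$. This proves the biconditional. For part~(ii): when the norm is smooth at $p_v - p_w$, the argument just given shows $\varphi_{v,w} = f$ where $\{f\} = \partial\|\cdot\|(p_0)$; in particular $\varphi_{v,w}$ is linear, satisfies $\varphi_{v,w}(p_0) = 1$ (take $x = p_0$: the difference quotient is constantly $1$), and has $\|\varphi_{v,w}\| \le 1$ since $\varphi_{v,w}(x) \le \|p_0 + x\| - \|p_0\| \le \|x\|$ applied to $\pm x$ (convexity gives $\varphi^+_{v,w}(x) \le \|p_0+x\|-\|p_0\|$). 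Hence $\varphi_{v,w}$ is the unique support functional for $p_0$.

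\textbf{Main obstacle.} There is no real obstacle here — this is a packaging of standard convex analysis, and the paper explicitly defers the details to~\cite[Section~2]{kit-sch}. The only points requiring care are: (a) invoking that in finite dimensions the norm is everywhere subdifferentiable and that the support functionals at $p_0$ are exactly the elements of $\partial\|\cdot\|(p_0)$ (a consequence of the definition of subgradient together with homogeneity of the norm, which forces $f(p_0) = \|p_0\|$ for any $f\in\partial\|\cdot\|(p_0)$); and (b) the identification of the one-sided directional derivatives with the support function of the subdifferential, which is~\cite[Theorem~23.1]{rockafellar} (already cited in the excerpt for exactly this purpose). With those two facts in hand, the proof is the short dichotomy argument above.
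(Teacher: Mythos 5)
Your proof is correct and is essentially the argument the paper has in mind: the paper gives no proof of this lemma, deferring to \cite[Section~2]{kit-sch}, and your convex-analytic dichotomy (one-sided directional derivatives exist, the support functionals at $p_0$ are exactly $\partial\|\cdot\|(p_0)$, and $\varphi^{\pm}_{v,w}$ are the max/min of the subdifferential) is the standard route taken there. The only nitpick is a citation detail: existence of the one-sided limits is \cite[Theorem~23.1]{rockafellar}, but the identification of $\varphi^{+}_{v,w}$ with the support function of the subdifferential is Theorem~23.4 of that book.
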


Recall from the introduction that every unitarily invariant norm on $\M_n(\bF)$ arises from a symmetric norm on $\bR^n$ and that $\sigma(x)\in\bR^n$ denotes the vector of singular values, arranged in decreasing order, for a matrix $x\in\M_n(\bF)$.

\begin{lemma}\label{lem:smoothness}
  Let~$\|\cdot\|$ be a unitarily invariant norm on~$\M_n(\bF)$, with
  corresponding symmetric norm~$\|\cdot\|_s$ on~$\bR^n$, and let  $x\in \M_n(\bF)$. Then $\|\cdot\|$ is smooth at $x$
  if and only if~$\|\cdot\|_s$ is smooth at $\sigma(x)$.
\end{lemma}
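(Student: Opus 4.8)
The plan is to relate smoothness of the unitarily invariant norm $\unorm\cdot$ at $x$ to smoothness of the symmetric gauge $\unorm\cdot_s$ at $\sigma(x)$ by passing through the variational description of support functionals in Lemma~\ref{lem:smooth}(i): $\unorm\cdot$ is smooth at $x\ne0$ precisely when the one-sided directional derivatives of $\unorm\cdot$ at $x$ agree in every direction, i.e.\ when the limit $\lim_{t\to0}t^{-1}(\unorm{x+ty}-\unorm{x})$ exists for all $y\in\M_n(\bF)$, and similarly for $\unorm\cdot_s$ at $\sigma(x)$. So the task reduces to comparing these directional derivatives on the two sides.

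First I would reduce to the case where $x$ is a nonnegative diagonal matrix $\diag(\sigma(x))$, using unitary invariance: writing $x=u\,\diag(\sigma(x))\,w$ with $u,w$ unitary, the map $y\mapsto u^*yw^*$ is a linear isometry of $(\M_n(\bF),\unorm\cdot)$ carrying $x$ to $\diag(\sigma(x))$, so $\unorm\cdot$ is smooth at $x$ iff it is smooth at $\diag(\sigma(x))$. Next, for the ``only if'' direction I would restrict attention to diagonal perturbations: if $\unorm\cdot$ is smooth at $d:=\diag(\sigma(x))$, then in particular $t\mapsto\unorm{d+t\diag(h)}=\unorm{\diag(\sigma(x)+th)}=\unorm{\sigma(x)+th}_s$ is differentiable at $t=0$ for every $h\in\bR^n$, which is exactly the statement that $\unorm\cdot_s$ has well-defined directional derivatives at $\sigma(x)$, hence (being a finite-dimensional norm, by Lemma~\ref{lem:smooth}(i) applied to $(\bR^n,\unorm\cdot_s)$) is smooth there. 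For the ``if'' direction I would invoke the known formula for the one-sided directional derivative of a unitarily invariant norm in terms of the corresponding symmetric gauge and the singular values/singular vectors of $x$ — this is the substantive analytic input, and it shows that when $\sigma(x)$ is a point of smoothness of $\unorm\cdot_s$, the directional derivative $\phi_\pm(y)$ of $\unorm\cdot$ at $x$ is given by a single formula independent of the $\pm$ sign (concretely, $\mathrm{Re}\,\tr(g(x)^*y)$ for the unique subgradient matrix $g(x)$ obtained by applying the gradient of $\unorm\cdot_s$ at $\sigma(x)$ along the singular directions of $x$). Alternatively, and perhaps more cleanly, I would cite the characterisation of the subdifferential of a unitarily invariant norm at $x$ as $\{u\,\diag(v)\,w\colon v\in\partial\unorm{\cdot}_s(\sigma(x)),\ x=u\,\diag(\sigma(x))\,w\}$ (with a compatibility condition on $u,w$ when $x$ has repeated singular values), so that this set is a singleton iff $\partial\unorm\cdot_s(\sigma(x))$ is a singleton — i.e.\ iff $\unorm\cdot_s$ is smooth at $\sigma(x)$ — once one checks that repeated singular values of $x$ force $\sigma(x)$ to sit at a non-smooth point of the symmetric norm anyway (because $\unorm\cdot_s$ is invariant under sign changes and permutations of coordinates, and a point with a repeated entry, or a zero entry accompanied by a sign ambiguity, cannot be a smooth point unless the repeated coordinates are handled by the symmetry — this needs a short argument).

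The main obstacle is handling \emph{repeated singular values} (and zero singular values): when $\sigma_i(x)=\sigma_{i+1}(x)$, the singular vectors are not unique, the map $x\mapsto\sigma(x)$ is not differentiable, and one must verify that the possible non-uniqueness of the ``gradient matrix'' $u\,\diag(v)\,w$ over different choices of $u,w$ exactly matches the non-uniqueness of $v\in\partial\unorm\cdot_s(\sigma(x))$ coming from the permutation/sign symmetry of the symmetric norm at such a point; equivalently, one must show that $\sigma(x)$ fails to be a smooth point of $\unorm\cdot_s$ in precisely these degenerate configurations. Everything else — the reduction to diagonal $x$, the diagonal-perturbation argument for one direction, and translating between ``unique support functional'' and ``differentiable directional derivative'' via Lemma~\ref{lem:smooth} — is routine.
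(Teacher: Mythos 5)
The paper's entire proof is a citation of Watson's characterisation of the subdifferential of a unitarily invariant norm (\cite[Theorem~2]{watson}), which is precisely the ``subdifferential'' route you describe as your second alternative; so in outline your proposal matches the paper's, and your reduction to $x=\diag(\sigma(x))$ and your diagonal-perturbation argument for the ``only if'' direction are both fine.

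However, the way you propose to close what you correctly identify as the main gap is wrong. You claim that ``repeated singular values of $x$ force $\sigma(x)$ to sit at a non-smooth point of the symmetric norm'', so that the SVD ambiguity only ever arises when both sides of the equivalence fail. This is false: the Frobenius norm (more generally any Schatten $p$-norm with $1<p<\infty$) is smooth at \emph{every} nonzero matrix, including the identity, whose singular value vector $(1,\dots,1)$ has all entries repeated; correspondingly the $\ell^p$ norm is smooth at $(1,\dots,1)$. So smoothness of $\|\cdot\|_s$ at $\sigma(x)$ can perfectly well coexist with repeated (or zero) singular values, and you cannot dismiss the degenerate configurations. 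The correct resolution runs in the opposite direction: if $\|\cdot\|_s$ is smooth at $\sigma(x)$ with gradient $d$, then permutation-invariance of $\|\cdot\|_s$ forces $d_i=d_j$ whenever $\sigma_i(x)=\sigma_j(x)$, and sign-invariance forces $d_i=0$ whenever $\sigma_i(x)=0$; these inherited symmetries of $d$ are exactly what make $u\,\diag(d)\,w$ independent of the choice of singular value decomposition $x=u\,\diag(\sigma(x))\,w$, so the candidate subgradient set collapses to a singleton. With that substitution (or by simply quoting Watson's theorem, as the paper does, which packages this bookkeeping), your argument goes through.
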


\begin{proof}
The result follows from~\cite[Theorem~2]{watson}.
\end{proof}

Support functionals for the Schatten $p$-norms are described in \cite{arazy}. We apply these results below to characterise the support functionals $\varphi_{v,w}$.

\begin{example}
  Let $1\leq q\leq \infty$ and let $(G,p)$ be a bar-joint framework in $(\M_n(\bF),\|\cdot\|_{c_q})$. Let $vw\in E$, suppose the norm is smooth at $p_v-p_w$ and let $p_0=\frac{p_v-p_w}{\|p_v-p_w\|_{c_q}}$.
\begin{enumerate}[(a)]
\item If $q<\infty$, then for all $x\in \M_n(\bF)$, 
\[\varphi_{v,w}(x) = \tr(x|p_0|^{q-1}u^*)\]
where $p_0=u|p_0|$
is the polar decomposition of~$p_0$.

\item If $q=\infty$, then by Lemma \ref{lem:smoothness}, the largest singular value of the matrix $p_0$ has multiplicity one. Thus $p_0$ attains its norm at a unit vector
  $\zeta\in \bF^n$ which is unique (up to scalar multiples). It follows that for all $x\in \M_n(\bF)$, we have
  \[\varphi_{v,w}(x) = \langle x \zeta, p_0 \zeta\rangle\]
  where $\langle\cdot,\cdot\rangle$ is the usual Euclidean inner
  product on~$\bF^n$.
\end{enumerate}
\end{example}

\subsection{Well-positioned frameworks}
\label{subsection:well-pos-support} 

A bar-joint framework $(G,p)$ is said to be \emph{well-positioned} in $(X,\|\cdot\|)$  if 
the norm~$\|\cdot\|$ is smooth at $p_v-p_w$ for every edge $vw\in E$. 

The following criteria apply to well-positioned bar-joint frameworks in the case of Schatten $p$-norms.

\begin{proposition}
  Let~$1\leq q\leq \infty$ with $q\ne2$, and suppose that~$(X,\|\cdot\|_{c_q})$ is an admissible matrix space in~$\M_n(\bF)$. 
  Let $(G,p)$ be a bar-joint framework in $(X,\|\cdot\|_{c_q})$. 
\begin{enumerate}[(i)]
\item\label{not12inf} If $q\not\in \{1,\infty\}$, then $(G,p)$ is well-positioned.
\item If~$q=1$ and $p_v-p_w$ is invertible for all $vw\in E$, then
  $(G,p)$ is well-positioned. For $X=\M_n(\bF)$, the converse also
  holds.
\item If~$q=\infty$ and $\sigma_1(p_v-p_w)>\sigma_2(p_v-p_w)$ for all
  $vw\in E$, then $(G,p)$ is well-positioned. For $X=\M_n(\bF)$,
  the converse also holds.
\end{enumerate}
\end{proposition}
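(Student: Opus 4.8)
The plan is to reduce everything to the smoothness criterion of Lemma~\ref{lem:smoothness}, which says $\|\cdot\|_{c_q}$ is smooth at $p_v-p_w$ precisely when the $\ell_q$-norm on $\bR^n$ is smooth at the singular value vector $\sigma(p_v-p_w)$. So the first step is to recall the smoothness points of $\|\cdot\|_{\ell_q}$ on $\bR^n$: for $1<q<\infty$ the $\ell_q$-norm is smooth at every nonzero vector (it is a standard fact that these norms are smooth away from the origin, and a reference was already cited via~\cite{kit-sch}); for $q=1$ the $\ell_1$-norm fails to be smooth exactly at vectors with a zero coordinate; and for $q=\infty$ the $\ell_\infty$-norm fails to be smooth exactly at vectors whose maximum modulus is attained by at least two coordinates. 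Since $p_v\ne p_w$ for every edge, $\sigma(p_v-p_w)$ is always a nonzero vector, which immediately yields part~(\ref{not12inf}).

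For part~(ii), I would translate the $\ell_1$ condition ``no zero coordinate'' into a statement about singular values: $\sigma(p_v-p_w)$ has no zero entry if and only if $\sigma_n(p_v-p_w)>0$, i.e.\ if and only if $p_v-p_w$ is invertible in $\M_n(\bF)$. This gives the sufficient direction for any admissible $X$. For the converse when $X=\M_n(\bF)$: if some $p_v-p_w$ is singular, then $\sigma(p_v-p_w)$ has a zero coordinate, so $\|\cdot\|_{\ell_1}$ is not smooth there, hence $\|\cdot\|_{c_1}$ is not smooth at $p_v-p_w$, so $(G,p)$ is not well-positioned. For part~(iii), the $\ell_\infty$ smoothness condition at $\sigma(p_v-p_w)$ — that the maximum modulus is attained uniquely — becomes $\sigma_1(p_v-p_w)>\sigma_2(p_v-p_w)$ since the singular values are listed in decreasing order and are nonnegative; again this is sufficient in general and, for $X=\M_n(\bF)$, necessary by running the equivalence backwards.

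The only subtlety worth care is the converse direction in (ii) and (iii): smoothness of $\|\cdot\|_{c_q}$ at a point $x\in X$ is a property relative to the subspace $X$, so in principle a non-smooth point of $\|\cdot\|_{c_q}$ on all of $\M_n(\bF)$ could become smooth when restricted to a proper subspace $X$. This is exactly why the converse is asserted only for $X=\M_n(\bF)$, and for that case Lemma~\ref{lem:smoothness} applies directly with no restriction. So the only real content is assembling the smoothness facts for $\ell_q$ on $\bR^n$ and feeding them through Lemmas~\ref{lem:smooth} and~\ref{lem:smoothness}; I expect the main (very minor) obstacle to be just citing or briefly justifying the smoothness/non-smoothness loci of $\|\cdot\|_{\ell_q}$, which the paper has already pointed to via~\cite{kit-sch}.
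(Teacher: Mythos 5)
Your proposal is correct and follows essentially the same route as the paper: reduce to the smoothness loci of the $\ell_q$-norm on $\bR^n$ via Lemma~\ref{lem:smoothness}, translate "no zero coordinate" and "unique maximum" into invertibility and $\sigma_1>\sigma_2$ respectively, and note that smoothness on $\M_n(\bF)$ passes down to the subspace $X$ (which is exactly why the converses are only claimed for $X=\M_n(\bF)$). The paper phrases this last point as "it suffices to prove the case $X=\M_n(\bF)$," but the content is identical to your "subtlety" paragraph.
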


\begin{proof}
  Observe first that if~$(G,p)$ is well-positioned 
	in~$(\M_n(\bF),\|\cdot\|_{c_q})$, 
	then $(G,p)$ is necessarily well-positioned in~$(X,\|\cdot\|_{c_q})$. Hence it suffices to give a proof in the case $X=\M_n(\bF)$. Recall that the
  $\ell_q$ norm on~$\bR^n$ is smooth at the following vectors:
\begin{enumerate}[(i)]
\item at every non-zero vector in~$\bR^n$ if~$q\not\in \{1,\infty\}$;
\item at every vector with every entry non-zero if $q=1$; and
\item at every vector~$\sigma=(\sigma_1,\dots,\sigma_n)$ so that
  $\max_{1\leq i\leq n}|\sigma_i|$ is attained at precisely one $i\in
  \{1,2,\dots,n\}$, if $q=\infty$.
\end{enumerate}
It now suffices to apply
  Lemma~\ref{lem:smoothness}. 
\end{proof}

\subsection{The rigidity map}
As in~\cite{kit-sch}, we consider the \emph{rigidity map} $f_G$, given by
\[f_G:X^{V}\to \bR^{E}, \qquad (x_v)_{v\in V}\mapsto
(\|x_v-x_w\|)_{vw\in E}.\] 
If the rigidity map is differentiable at $p\in X^V$, then \[df_G(p):X^{V}\to \bR^{E},\] is the differential of~$f_G$ at~$p$. Here we equip~$X^V$ %
with the norm topology.

We will require the following results.

\begin{lemma}{\cite[Proposition 6]{kit-sch}}
\label{lem:flexcondition}
Let $(G,p)$ be a bar-joint framework in a normed linear space $(X,\|\cdot\|)$.
\begin{enumerate}[(i)]
\item
$(G,p)$  is well-positioned in $(X,\|\cdot\|)$  if and only if the rigidity map $f_G$ is differentiable at $p$. 
\item
If $(G,p)$ is well-positioned in $(X,\|\cdot\|)$ 
then the differential of the rigidity map is given by
\[df_G(p):X^{V}\to \bR^{E}, \,\,\,\,\,\,\,
(z_v)_{v\in V} \mapsto (\varphi_{v,w}(z_v-z_w))_{vw\in E}.\]
\end{enumerate}
\end{lemma}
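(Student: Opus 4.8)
The statement to prove is Lemma \ref{lem:flexcondition}, attributed to \cite[Proposition 6]{kit-sch}. Since this is cited from the literature, the cleanest approach would simply be to invoke the reference, but I will sketch a self-contained argument that derives it from the smoothness facts already established in Lemma \ref{lem:smooth}. The plan is to reduce everything to the behaviour of a single edge function $g_{vw}\colon X^V\to\bR$, $g_{vw}((x_u)_u)=\|x_v-x_w\|$, since $f_G$ is the tuple $(g_{vw})_{vw\in E}$ and differentiability of $f_G$ at $p$ is equivalent to differentiability of each $g_{vw}$ at $p$.

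First I would handle part (i). Fix an edge $vw\in E$. The map $g_{vw}$ factors as the composition of the bounded linear surjection $L_{vw}\colon X^V\to X$, $(x_u)_u\mapsto x_v-x_w$, with the norm $\|\cdot\|\colon X\to\bR$. Since $(G,p)$ is a framework we have $p_v-p_w=L_{vw}(p)\neq0$, and the norm is locally Lipschitz, hence differentiable on a dense set; the key point is that a convex function on a finite-dimensional space is (Fréchet-)differentiable at a nonzero point $y_0$ precisely when it has a unique subgradient there, i.e.\ precisely when there is a unique support functional at $y_0/\|y_0\|$ — this is exactly the smoothness condition of Lemma \ref{lem:smooth}(i). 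So the norm is differentiable at $p_v-p_w$ iff $\|\cdot\|$ is smooth at $p_v-p_w$ iff the limit $\varphi_{v,w}(x)$ in \eqref{eq:support-functional-limit} exists for all $x\in X$. Since $L_{vw}$ is linear and surjective, $g_{vw}$ is differentiable at $p$ iff the norm is differentiable at $L_{vw}(p)=p_v-p_w$. Taking the conjunction over all edges, $f_G$ is differentiable at $p$ iff the norm is smooth at $p_v-p_w$ for every $vw\in E$, which is the definition of $(G,p)$ being well-positioned.

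For part (ii), assuming $(G,p)$ is well-positioned, the chain rule gives $d g_{vw}(p)=\big(d\|\cdot\|\big)_{p_v-p_w}\circ L_{vw}$. By Lemma \ref{lem:smooth}(ii), the derivative of $\|\cdot\|$ at $p_v-p_w$, evaluated in direction $y\in X$, is the directional derivative $\lim_{t\to0}\tfrac1t(\|(p_v-p_w)+ty\|-\|p_v-p_w\|)$, which by homogeneity of the norm equals $\varphi_{v,w}(y)$ (the unique support functional at $p_0=(p_v-p_w)/\|p_v-p_w\|$). Hence for a tangent vector $(z_u)_u\in X^V$ we get $d g_{vw}(p)\big((z_u)_u\big)=\varphi_{v,w}(L_{vw}((z_u)_u))=\varphi_{v,w}(z_v-z_w)$, and assembling over edges yields $df_G(p)\big((z_u)_u\big)=(\varphi_{v,w}(z_v-z_w))_{vw\in E}$, as claimed.

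The only genuinely delicate point is the equivalence, for the convex function $\|\cdot\|$ on the finite-dimensional space $X$, between Fréchet differentiability at a nonzero point and uniqueness of the support functional (one-sided directional derivative being linear). On finite-dimensional spaces this is standard convex analysis — continuity of a convex function is automatic, Gâteaux and Fréchet differentiability coincide, and differentiability is equivalent to the subdifferential being a singleton (see \cite{rockafellar}). So the argument is essentially bookkeeping around this fact together with the chain rule for the surjective linear reparametrisation $L_{vw}$; I would present it at the level of detail above and defer the convex-analytic input to \cite{rockafellar}, or simply cite \cite[Proposition 6]{kit-sch} directly as the paper does.
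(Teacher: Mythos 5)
Your proposal is correct; the paper itself offers no argument for this lemma beyond the citation to \cite[Proposition~6]{kit-sch}, and your self-contained sketch (reduce to a single edge, factor $g_{vw}$ through the surjective linear map $L_{vw}$, and invoke the equivalence for convex functions between Fr\'echet differentiability at a nonzero point and uniqueness of the support functional) is the standard argument underlying the cited result. The only step you gloss --- that for a convex function the existence of all two-sided directional derivatives already forces their linearity, so that Lemma~\ref{lem:smooth}(i) really does yield differentiability --- is exactly \cite[Theorem~25.2]{rockafellar}, to which you correctly defer.
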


An \emph{infinitesimal flex} of a bar-joint framework $(G,p)$ is a vector $z\in X^V$ such that
\[\lim_{t\to 0} \frac{1}{t}(f_G(p+tz)- f_G(p))=0.\]
The collection of all infinitesimal flexes of $(G,p)$ is denoted $\F(G,p)$. 
Note that, by Lemma \ref{lem:flexcondition}, if $(G,p)$ is well-positioned then $\F(G,p)=\ker df_G(p)$. 

\subsection{Full sets}

 Given a normed space~$(X,\|\cdot\|)$, and  a non-empty subset $S\subseteq X$, consider the restriction map,
\[\rho_{S}: \T(X,\|\cdot\|)\to X^S, \quad \eta\mapsto (\eta(x))_{x\in S}.\] 

\begin{definition}\label{def:full}
  A non-empty subset $S\subseteq X$ is \emph{full} in $(X,\|\cdot\|)$ if the restriction map $\rho_{S}$ is injective; that is, if $S$ is a separating set for~$\T(X,\|\cdot\|)$.
\end{definition}

Recall that  $S$ is said to have  \emph{full affine span} in $X$  if $[S]=X$, where 
$[S]$ is the affine span of~$S$, namely the linear span of $\{s-s_0\colon s\in S\}$ where
$s_0$ is any fixed vector in $S$. (Note that %
$[S]$ is
independent of the choice of~$s_0$). 

\begin{lemma}
\label{lem:full3}
Let $(X,\|\cdot\|)$ be a normed space   and
let $\emptyset\ne S\subseteq X$.  
If~$S$ has full affine span in~$X$, then $S$ is full in $(X,\|\cdot\|)$. 
\end{lemma}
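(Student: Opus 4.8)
The plan is to prove the contrapositive via a direct argument: show that if $S$ has full affine span then the restriction map $\rho_S$ has trivial kernel. Suppose $\eta \in \T(X,\unorm\cdot)$ with $\rho_S(\eta) = 0$, i.e.\ $\eta(s) = 0$ for every $s \in S$. By Lemma~\ref{lem:triv_aff}, $\eta$ is an affine map, so there is a real-linear map $B \colon X \to X$ and a vector $c \in X$ with $\eta(x) = B(x) + c$ for all $x \in X$. Fix some $s_0 \in S$; then for any $s \in S$ we have $0 = \eta(s) - \eta(s_0) = B(s - s_0)$, so $B$ vanishes on every difference $s - s_0$ with $s \in S$.

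The next step is to use the full affine span hypothesis. By definition, $[S]$ is the linear span of $\{s - s_0 \colon s \in S\}$, and the hypothesis says $[S] = X$. Since $B$ is linear and vanishes on a spanning set of $X$, it follows that $B = 0$ on all of $X$. Consequently $\eta(x) = c$ is constant. But then $0 = \eta(s_0) = c$, so $c = 0$ and hence $\eta = 0$. This shows $\rho_S$ is injective, i.e.\ $S$ is full in $(X,\unorm\cdot)$.

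There is essentially no obstacle here; the only mild subtlety to be careful about is the interplay between the \emph{affine} structure (the statement is about affine span and the trivial flexes are affine by Lemma~\ref{lem:triv_aff}) and the linear structure, but this is handled cleanly by passing to differences $s - s_0$ and using that $[S]$ is by definition a genuine linear span. No properties specific to matrix spaces or unitarily invariant norms are needed — only Lemma~\ref{lem:triv_aff}, which holds for arbitrary normed spaces — so the proof is short.
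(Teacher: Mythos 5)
Your proof is correct and follows the same route as the paper: both invoke Lemma~\ref{lem:triv_aff} to see that $\eta$ is affine and then use the full affine span of $S$ to conclude $\eta=0$. The paper states this in one line ($\eta$ affine and vanishing on $S$ forces $\eta(X)=\eta([S])=0$), while you simply unpack the same argument by writing $\eta=B+c$ and checking $B=0$ and $c=0$ explicitly.
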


\begin{proof}
  Let $\eta\in\T(X,\|\cdot\|)$ and suppose $\rho_S(\eta)=0$.
	By Lemma \ref{lem:triv_aff},
	$\eta$ is an affine map and so $\eta(X)=\eta([S])=0$.
 \end{proof}

 \begin{remark}
   	Note that full affine span is not strictly necessary for $S$ to be full in a normed space $(X,\|\cdot\|)$. 
 	For example, if $[S]$ is the set of upper triangular $n\times n$ matrices then it is not difficult to see that %
         $S$ is full in $(\M_n(\bF),\|\cdot\|)$ for any admissible norm. %
 	\end{remark}

\begin{definition}
We say that a bar-joint framework~$(G,p)$ in a normed space~$(X,\|\cdot\|)$  is, 
	\begin{enumerate}[(a)]
	\item \emph{full} if $\{p_v\colon v\in V\}$ is full in $(X,\|\cdot\|)$. 
	\item \emph{completely full} if $(G,p)$, and every subframework $(H,p_H)$ of $(G,p)$ with $|V(H)|\geq 2\dim(X)$, is full in $(X,\|\cdot\|)$.
	\end{enumerate}
\end{definition}

\begin{remark}
 We remark that the property of being completely full, which will be required in Theorem \ref{thm:tight},  is satisfied by almost all bar-joint frameworks.
Indeed, if $(G,p)$ is a bar-joint framework in $X$ and $S=\{p_v:v\in V\}$ is in general position in~$X$, then every subset of $S$ containing at least $\dim(X)+1$ points has full affine span in $X$. Thus, by Lemma \ref{lem:full3}, $(G,p)$ is completely full in $(X,\|\cdot\|)$, for all norms on $X$. 
\end{remark}
		
\subsection{$k(X)$ and $l(X)$ values}

For $X\in \{\M_n(\bF),\H_n(\bF)\}$, we define natural numbers $k(X)$ and $l(X)$ according to the formulae in Table~\ref{table:kl1}. Note that $k(X)=\dim X$ and by Propositions~\ref{prop:dim1} and~\ref{prop:dim2}, we have $l(X)=\dim(\T(X,\|\cdot\|))$ for any admissible norm $\|\cdot\|$ on~$X$. For ease of reference, the cases $n=2$ and $n=3$ are listed in Table~\ref{table:kl2}.

\begin{table}[ht]
\centering
{\renewcommand{\arraystretch}{1.4}%
\begin{tabular}{|c||c|c| }
  \hline
  $X$ & $k(X)$ & $l(X)$ \\ [1ex] \hline 
  $\H_n(\bR)$ & $\tfrac12n(n+1)$ & $n^2$ \\ [1ex] \hline
	$\M_n(\bR)$ & $n^2$ & $2n^2-n$ \\  [1ex] \hline
  $\H_n(\bC)$ & $n^2$ & $2n^2-1$ \\ [1ex] \hline
  $\M_n(\bC)$ & $2n^2$ & $4n^2-1$ \\ \hline
\end{tabular}}
\vspace{4mm}
\caption{$k$ and $l$ values for admissible matrix spaces.} %
\label{table:kl1} %
\end{table}

\begin{table}[ht]
\centering
{\renewcommand{\arraystretch}{1.25}%
\begin{tabular}{|c||c|c| c c |c||c|c| }
  \cline{1-3} \cline{6-8}
  $X$ & $k(X)$ & $l(X)$ & & & $X$ & $k(X)$ & $l(X)$ \\ [1ex] \cline{1-3} \cline{1-3} \cline{6-8} \cline{6-8} 
  $\H_2(\bR)$ & $3$ & $4$ & & & $\H_3(\bR)$ & $6$ & $9$ \\ [1ex] \cline{1-3} \cline{6-8}
	$\M_2(\bR)$ & $4$ & $6$ & & & $\M_3(\bR)$ & $9$ & $15$ \\  [1ex] \cline{1-3} \cline{6-8}
  $\H_2(\bC)$ & $4$ & $7$ & & & $\H_3(\bC)$ & $9$ & $17$\\ [1ex] \cline{1-3} \cline{6-8}
	$\M_2(\bC)$ & $8$ & $15$ & & & $\M_3(\bC)$ & $18$ & $35$\\  \cline{1-3} \cline{6-8}
\end{tabular}}
\vspace{4mm}
\caption{$k$ and $l$ values for admissible matrix spaces when $n=2$ and $n=3$.} 
\label{table:kl2} %
\end{table}

We will require the following result. As usual, we take $n\in\bN$ with $n\ge2$, and for~$m\in \bN$ we write~$K_m$ for the complete graph on~$m$ vertices. 

\begin{lemma}
\label{lem:comb}
Let~$X\in \{\M_n(\bF),\H_n(\bF)\}$, let $(k,l)=(k(X),l(X))$. Consider
$m\in \bN$.
\begin{enumerate}[(i)]
\item $|E(K_m)|\leq km-l$ if and only if $m\in\{2,\ldots,2k-1\}$. 
\item $|E(K_m)|= km-l$ if and only if $\bF=\bC$ and $m=\{2, 2k-1\}$.
\end{enumerate}
\end{lemma}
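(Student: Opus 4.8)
The plan is to reduce both parts to an elementary inequality involving the quadratic $\binom m2 = \tfrac12 m(m-1)$ and the linear function $km-l$, and to handle the four cases of Table~\ref{table:kl1} uniformly wherever possible. Write $d(m):=km-l-\binom m2 = km - l - \tfrac12 m(m-1)$. This is a downward-opening quadratic in~$m$, so $|E(K_m)|\le km-l$ holds exactly on an interval of integer values, and to pin down that interval it suffices to evaluate $d$ at a few candidate endpoints. First I would record the values of $(k,l)$ from Table~\ref{table:kl1} and observe the key arithmetic fact that in every case $l = 2k-1$ when $\bF=\bC$ (indeed $l = 2n^2-1 = 2k-1$ for $\H_n(\bC)$ and $l = 4n^2-1 = 2k-1$ for $\M_n(\bC)$), whereas $l = 2k-n < 2k-1$ when $\bF=\bR$ and $n\ge2$ (since $l = n^2$, $k=\tfrac12 n(n+1)$ gives $l=2k-n$ for $\H_n(\bR)$, and $l=2n^2-n$, $k=n^2$ gives $l=2k-n$ for $\M_n(\bR)$).

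For part~(i), I would first check the lower endpoint: $d(2) = 2k - l - 1$. When $\bF=\bC$ this is $2k-(2k-1)-1 = 0\ge 0$, and when $\bF=\bR$ it is $2k-(2k-n)-1 = n-1 \ge 1 > 0$; so $m=2$ always lies in the good range, consistent with $\{2,\dots,2k-1\}$ being nonempty. Next I would check $m=2k-1$ and $m=2k$. A direct computation gives $d(2k-1) = k(2k-1) - l - \tfrac12(2k-1)(2k-2) = (2k-1)(k - (k-1)) - l = 2k-1-l$, which is $\ge0$ in all four cases by the relation above ($=0$ for $\bF=\bC$, $=n-1>0$ for $\bF=\bR$). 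Similarly $d(2k) = 2k^2 - l - k(2k-1) = k - l < 0$ since $l\ge 2k-1 > k$ for $k\ge2$ (and $k\ge3$ always, being $\dim X$ with $n\ge2$). Since $d$ is concave, $d(m)\ge0$ on exactly the integer interval between its two roots, and we have shown $d(2)\ge0$, $d(2k-1)\ge0$, $d(2k)<0$; it remains to verify the left root lies below~$2$, equivalently $d(1)<0$ or more simply that $d$ is increasing at $m=2$, i.e. the vertex $m^* = k+\tfrac12 > 2k-1$... no: one checks $d(1) = k-l <0$ as above, so indeed the good integer range is exactly $\{2,\dots,2k-1\}$, noting $2k-1\ge 2$ since $k\ge3$.

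For part~(ii), equality $|E(K_m)| = km-l$ means $d(m)=0$. From the computations above, $d(2)=0$ iff $\bF=\bC$, and $d(2k-1)=0$ iff $\bF=\bC$; and for $\bF=\bR$ we have $d(m)>0$ strictly on the whole range $\{2,\dots,2k-1\}$ and $d(m)<0$ outside it, so there is no equality when $\bF=\bR$. When $\bF=\bC$, $d$ is a concave quadratic vanishing at $m=2$ and $m=2k-1$; if $2k-1>2$ (which holds since $k\ge3$) these are its only two integer zeros, and between them $d>0$ while outside $d<0$. Hence for $\bF=\bC$ equality holds precisely for $m\in\{2,\,2k-1\}$, as claimed.

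The only genuinely nonroutine point is bookkeeping: confirming the uniform identity $l=2k-1$ (resp. $l=2k-n$) across all four rows of Table~\ref{table:kl1}, which is what makes the endpoint evaluations $d(2)$ and $d(2k-1)$ come out cleanly and identically in the two complex cases. Once that is in hand, everything else is concavity of $d$ plus the three sign evaluations $d(2)\ge0$, $d(2k-1)\ge0$, $d(2k)<0$ (and $d(1)<0$), so I would present the argument by first establishing the $(k,l)$ identities as a short lemma-style remark and then running the sign analysis. I expect the main obstacle — really just a verification — to be making sure no small-$n$ corner case (e.g. $\H_2(\bR)$ with $k=3$, so $2k-1=5$) violates the stated range, which the concavity argument handles automatically.
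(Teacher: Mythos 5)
Your argument is essentially the paper's own proof: the paper studies $f(m)=\tfrac12(m^2-(2k+1)m+2l)=|E(K_m)|-(km-l)$ (the negative of your $d$), evaluates it at $m=1,2,2k-1,2k$ to get $f(1)=f(2k)=l-k>0$ and $f(2)=f(2k-1)=l+1-2k$, which is $-(n-1)<0$ for $\bF=\bR$ and $0$ for $\bF=\bC$, and concludes by convexity exactly as you do. One small slip in your justification of $d(2k)<0$: the inequality $l\ge 2k-1$ is false in the real cases (there $l=2k-n\le 2k-2$), but the needed fact $l>k$ still holds in all four rows (it is equivalent to $k>n$, which is immediate since $k=\dim X\ge \tfrac12 n(n+1)>n$ for $n\ge2$), so the conclusion stands.
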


\begin{proof}
Consider the quadratic function~$f\colon\bR\to \bR$ given by
\[f(t)= \frac{1}{2}(t^2-(2k+1)t+2l).\]
It is easy to see that $f(m)=|E(K_m)|-(km-l)$, and $f(1)=f(2k)=l-k>0$. Moreover,
\[f(2)=f(2k-1)=l+1-2k=
  \begin{cases}
    -(n-1)&\text{if $\bF=\bR$,}\\
    0&\text{if $\bF=\bC$}
  \end{cases}\]
so $f(2)=f(2k-1)\leq0$, with equality if and only if $\bF=\bC$. 
The claims follow immediately.
\end{proof}

\subsection{Trivial infinitesimal flexes}

Given a bar-joint framework~$(G,p)$, we define
\[\T(G,p)=\{ \zeta\colon V\to X\mid  \zeta=\eta\circ p\text{ for some $\eta\in\T(X,\|\cdot\|)$}\}\subseteq X^V.\]
Note that  $\T(G,p)$ is a subspace of $\F(G,p)$, the space of infinitesimal flexes of $(G,p)$ (see \cite[Lemma 2.3]{kit-pow}).
The elements of $\T(G,p)$ are referred to as the {\em
  trivial infinitesimal flexes} of  $(G,p)$.
  
\begin{example}
\label{example:triv1}
Suppose $(G,p)$ is a bar-joint framework in an admissible matrix space $(X,\|\cdot\|)$.
If $X=\M_n(\bF)$, then by Lemma~\ref{lem:iso} we have
\[ \T(G,p)=\{(ap_v+p_vb+c)_{v\in V}\colon a\in \Skew_n(\bF),\ b\in \Skew_n^0(\bF),\ c\in \M_n(\bF) \},\]
and if $X=\H_n(\bF)$, then by Lemma~\ref{lem:range} we have
  \[\T(G,p)=\{(ap_v-p_va+c)_{v\in V}\colon a\in \Skew_n^0(\bF),\ c\in \H_n(\bF) \}.\] 
\end{example}

\begin{lemma}
\label{lem:triv2}
If $(G,p)$ is a full  bar-joint framework in a normed linear space $(X,\|\cdot\|)$, then
\[ \dim \T(G,p)=\dim \T(X,\|\cdot\|).\] In particular, if $X\in \{\M_n(\bF),\H_n(\bF)\}$ and $(X,\|\cdot\|)$ is an admissible matrix space, then $\dim\T(G,p)=l(X)$.
\end{lemma}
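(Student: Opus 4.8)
The plan is to show that the restriction map $\rho_S$ of Definition~\ref{def:full}, composed with the map $\eta\mapsto\eta\circ p$, has trivial kernel when $(G,p)$ is full, so that $\T(G,p)$ is a linear isomorphic image of $\T(X,\|\cdot\|)$. Concretely, there is a surjective linear map $\T(X,\|\cdot\|)\to\T(G,p)$, $\eta\mapsto\eta\circ p$, by the very definition of $\T(G,p)$; I must show it is also injective. Write $S=\{p_v\colon v\in V\}$. If $\eta\in\T(X,\|\cdot\|)$ satisfies $\eta\circ p=0$ in $X^V$, then $\eta(x)=0$ for every $x\in S$, i.e.\ $\rho_S(\eta)=0$, so $\eta=0$ since $S$ is full by hypothesis. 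Hence the map is injective, giving $\dim\T(G,p)=\dim\T(X,\|\cdot\|)$.

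For the second assertion, I would simply invoke the earlier computations: when $X\in\{\M_n(\bF),\H_n(\bF)\}$ and $(X,\|\cdot\|)$ is admissible, Propositions~\ref{prop:dim1} and~\ref{prop:dim2} give $\dim\T(X,\|\cdot\|)$ in each of the four cases, and by the remark following Table~\ref{table:kl1} these values are exactly $l(X)$ as listed. (Alternatively one can point to the explicit descriptions of $\T(G,p)$ in Example~\ref{example:triv1} together with Lemma~\ref{lem:iso} and Lemma~\ref{lem:range}.) Combining this with $\dim\T(G,p)=\dim\T(X,\|\cdot\|)$ finishes the proof.

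There is essentially no obstacle here: the content is entirely bookkeeping, since fullness was defined precisely so that this isomorphism holds, and the dimension counts are already in place. The only mild care needed is to make sure the map under consideration is $\eta\mapsto(\eta(p_v))_{v\in V}$ and not $\eta\mapsto(\eta(x))_{x\in S}$ — these differ only if $p$ is not injective on $V$, but in that case $\rho_S$ still being injective forces $\eta$ to vanish on all of $S$, hence on all of $X^V$ after composing with $p$, so the conclusion is unaffected. I would therefore keep the proof to a few lines.

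\begin{proof}
Let $S=\{p_v\colon v\in V\}$ and consider the linear map
\[\Phi\colon\T(X,\|\cdot\|)\to\T(G,p),\qquad \eta\mapsto\eta\circ p.\]
By definition of $\T(G,p)$, the map $\Phi$ is surjective. Suppose $\eta\in\ker\Phi$, so that $\eta(p_v)=0$ for all $v\in V$; then $\eta(x)=0$ for every $x\in S$, that is, $\rho_S(\eta)=0$. Since $(G,p)$ is full, $S$ is full in $(X,\|\cdot\|)$, so $\rho_S$ is injective and $\eta=0$. Hence $\Phi$ is a linear isomorphism and $\dim\T(G,p)=\dim\T(X,\|\cdot\|)$.

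For the final statement, suppose $X\in\{\M_n(\bF),\H_n(\bF)\}$ and $(X,\|\cdot\|)$ is admissible. By Propositions~\ref{prop:dim1} and~\ref{prop:dim2}, $\dim\T(X,\|\cdot\|)$ takes the values recorded in the $l(X)$ column of Table~\ref{table:kl1}; that is, $\dim\T(X,\|\cdot\|)=l(X)$. Combining this with the equality above gives $\dim\T(G,p)=l(X)$.
\end{proof}
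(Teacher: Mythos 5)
Your proof is correct and follows essentially the same route as the paper: both arguments consider the linear map $\eta\mapsto(\eta(p_v))_{v\in V}$, observe its range is $\T(G,p)$ by definition, and deduce injectivity from fullness of $\{p_v\colon v\in V\}$, with the final claim read off from Propositions~\ref{prop:dim1} and~\ref{prop:dim2}. Your side remark distinguishing indexing by $V$ versus by $S$ is a sensible precaution but changes nothing of substance.
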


\begin{proof}
Observe that the linear map 
\[\rho_{(G,p)}:\T(X,\|\cdot\|)\to X^V, \,\,\,\,\,\,\, \eta\mapsto(\eta(p_v))_{v\in V},\] has range~$\T(G,p)$.
Since $\{p_v:v\in V\}$ is full in $(X,\|\cdot\|)$, $\rho_{(G,p)}$ is  also injective. Thus, $\dim \T(G,p)=\dim \T(X,\|\cdot\|)$. 
\end{proof}

\subsection{Infinitesimal rigidity}
A bar-joint framework $(G,p)$ is \emph{infinitesimally rigid} if every infinitesimal flex of $(G,p)$ is trivial (i.e., if $\F(G,p)=\T(G,p)$); otherwise, we say that~$(G,p)$ is \emph{infinitesimally flexible}.
A framework $(G,p)$ is said to be \emph{minimally infinitesimally rigid} if it is infinitesimally rigid and every subframework obtained by removing an edge from $G$ is infinitesimally flexible.

The following results are analogous to Maxwell's counting criteria for bar-joint frameworks in Euclidean space \cite{maxwell}.

\begin{theorem}
\label{thm:maxwell}
Let $(G,p)$ be a full and well-positioned bar-joint framework in an admissible matrix space $(X,\|\cdot\|)$, where  
$X\in \{\M_n(\bF),\H_n(\bF)\}$, and let  $(k,l)=(k(X),l(X))$. 
\begin{enumerate}[(i)]
\item If $(G,p)$ is infinitesimally rigid, then $|E|\geq k|V|-l$.
\item If $(G,p)$ is minimally infinitesimally rigid, then
$|E|=k|V|-l$.
\item
If $(G,p)$ is minimally infinitesimally rigid and $(H,p_H)$ is a full subframework of $(G,p)$, then $|E(H)|\leq k|V(H)|-l$.
\end{enumerate}
\end{theorem}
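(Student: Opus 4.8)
\textbf{Proof proposal for Theorem \ref{thm:maxwell}.}

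The plan is to convert each of the three statements into a rank inequality for the differential $df_G(p)$ of the rigidity map, using the fact that for a well-positioned framework, $\F(G,p)=\ker df_G(p)$ (Lemma \ref{lem:flexcondition}), together with the rank–nullity theorem applied to $df_G(p)\colon X^V\to\bR^E$. First I would record the elementary identity
\[
\dim\F(G,p)=\dim\ker df_G(p)=\dim(X^V)-\rank df_G(p)=k|V|-\rank df_G(p),
\]
since $\dim X=k$. For part (i), infinitesimal rigidity means $\F(G,p)=\T(G,p)$, and fullness gives $\dim\T(G,p)=l$ by Lemma \ref{lem:triv2}; hence $\rank df_G(p)=k|V|-l$. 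Since $df_G(p)$ maps into $\bR^E$, its rank is at most $|E|$, giving $|E|\ge k|V|-l$. For part (ii), I would argue that if $(G,p)$ is minimally infinitesimally rigid then the rows of $df_G(p)$ (indexed by edges) must be linearly independent: removing an edge $e$ yields a subframework which is still full (the vertex set and placement are unchanged) and well-positioned, and infinitesimally flexible by minimality, so $\dim\F(G-e,p)>l$; but $\ker df_{G-e}(p)$ differs from $\ker df_G(p)$ only in that one linear constraint has been dropped, so its dimension increases by at most one, forcing $\dim\F(G,p)=l$ and the dropped row to have been independent of the others. Doing this for every edge shows all $|E|$ rows are independent, so $\rank df_G(p)=|E|$; combined with part (i)'s computation $\rank df_G(p)=k|V|-l$, we get $|E|=k|V|-l$.

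For part (iii), let $(H,p_H)$ be a full subframework of a minimally infinitesimally rigid $(G,p)$. The key observation is that $(H,p_H)$ is well-positioned since $(G,p)$ is and $E(H)\subseteq E$, and its rigidity submatrix $df_H(p_H)$ is precisely the submatrix of $df_G(p)$ formed by the rows indexed by $E(H)$ (this is immediate from the formula in Lemma \ref{lem:flexcondition}(ii), noting $\varphi_{v,w}$ depends only on the edge $vw$ and the placement of its endpoints). By part (ii) applied to $(G,p)$, all rows of $df_G(p)$ are linearly independent, so in particular the rows indexed by $E(H)$ are linearly independent, giving $\rank df_H(p_H)=|E(H)|$. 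On the other hand, since $(H,p_H)$ is full and well-positioned, $\F(H,p_H)\supseteq\T(H,p_H)$ with $\dim\T(H,p_H)=l$, so $\dim\F(H,p_H)\ge l$, i.e. $k|V(H)|-\rank df_H(p_H)\ge l$. Combining these two facts yields $|E(H)|=\rank df_H(p_H)\le k|V(H)|-l$, as required.

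The main obstacle I anticipate is the argument in part (ii) that minimal rigidity forces the edge-rows of $df_G(p)$ to be linearly independent — specifically, making precise that deleting one edge increases the flex space dimension by at most one and that minimality forces strict increase, so that the deleted row was not in the span of the others. This requires care that removing an edge preserves fullness and well-positionedness (both clear, since the placement is untouched), and a clean statement of the rank–nullity bookkeeping: $\rank df_{G-e}(p)\ge \rank df_G(p)-1$ always, with equality exactly when row $e$ lies in the span of the remaining rows, and minimality (flexibility of $G-e$) rules out the case where $G-e$ is still rigid, i.e. rules out $\rank df_{G-e}(p)=k|V|-l$. Once this linear-algebra lemma is isolated, the rest is bookkeeping with $\dim X=k$ and $\dim\T(X,\|\cdot\|)=l$ from Table \ref{table:kl1} and Propositions \ref{prop:dim1}, \ref{prop:dim2}.
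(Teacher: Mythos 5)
Your proof is correct and follows essentially the same route as the paper: the paper simply cites \cite[Theorem~10]{kit-sch} together with Lemma~\ref{lem:triv2}, and that cited theorem is precisely the rank--nullity bookkeeping you have written out (using $\F(G,p)=\ker df_G(p)$ for well-positioned frameworks, $\dim\T(G,p)=\dim\T(X,\|\cdot\|)=l$ for full frameworks, and the edge-row independence forced by minimality). Your reconstruction of that argument, including the observation that the rows of $df_G(p)$ indexed by $E(H)$ are supported on the $V(H)$-coordinates and hence coincide with the rows of $df_H(p_H)$, is sound.
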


\begin{proof}
Apply \cite[Theorem 10]{kit-sch} and Lemma \ref{lem:triv2}.
\end{proof}

Let $k,l\in\bN$ with $l\in\{0,\ldots,2k-1\}$.
As is standard in combinatorial rigidity theory, a graph $G=(V,E)$ is said to be \emph{$(k,l)$-sparse} if every subgraph $H=(V(H),E(H))$ with  $|V(H)|\geq 2$ has at most  $k|V(H)|-l$ edges.
If in addition $|E| = k|V|-l$, then $G$ is said to be \emph{$(k,l)$-tight}.

\begin{theorem}
\label{thm:tight}
Let~$\|\cdot\|$ be an admissible norm on~$X\in \{\M_n(\bF),\H_n(\bF)\}$, and let $(k,l)=(k(X),l(X))$. Let $(G,p)$ be a completely full and well-positioned bar-joint framework in $(X,\|\cdot\|)$. If $(G,p)$ is minimally infinitesimally rigid,  then 
$G$ is $(k,l)$-tight.
\end{theorem}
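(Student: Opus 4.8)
The plan is to derive $(k,l)$-tightness of $G$ from Theorem~\ref{thm:maxwell} by showing that the relevant subframeworks are full, which is exactly what the hypothesis ``completely full'' is designed to provide. First I would note that by Theorem~\ref{thm:maxwell}(ii), since $(G,p)$ is full and well-positioned and minimally infinitesimally rigid, we immediately get $|E|=k|V|-l$; this handles the ``tight'' part of the equality. It remains to establish the sparsity condition: every subgraph $H=(V(H),E(H))$ with $|V(H)|\ge 2$ satisfies $|E(H)|\le k|V(H)|-l$.

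The argument for sparsity splits into two regimes according to $|V(H)|$. If $|V(H)|\ge 2\dim(X)=2k$, then by the definition of ``completely full'', the subframework $(H,p_H)$ is full in $(X,\|\cdot\|)$, and moreover it is well-positioned (being a subframework of the well-positioned $(G,p)$, smoothness at $p_v-p_w$ is inherited for each edge $vw\in E(H)\subseteq E$). Since $(G,p)$ is minimally infinitesimally rigid, Theorem~\ref{thm:maxwell}(iii) applies to the full subframework $(H,p_H)$ and yields $|E(H)|\le k|V(H)|-l$. In the remaining regime $2\le |V(H)|\le 2k-1$, I would bound $|E(H)|\le |E(K_{|V(H)|})|$ and invoke Lemma~\ref{lem:comb}(i), which states precisely that $|E(K_m)|\le km-l$ for all $m\in\{2,\ldots,2k-1\}$; this gives $|E(H)|\le k|V(H)|-l$ without needing any fullness of $H$. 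Combining the two regimes establishes $(k,l)$-sparsity, and together with the equality $|E|=k|V|-l$ this gives $(k,l)$-tightness.

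One technical point worth checking carefully: the definition of $(k,l)$-sparse requires $l\in\{0,\ldots,2k-1\}$, so I should confirm that the pairs $(k(X),l(X))$ from Table~\ref{table:kl1} satisfy this; indeed $l(X)-2k(X)+1$ equals $-(n-1)$ in the real cases and $0$ in the complex cases (this is exactly the computation of $f(2k-1)$ in the proof of Lemma~\ref{lem:comb}), and $l(X)\ge 0$ is clear, so $0\le l(X)\le 2k(X)-1$ holds in all cases. I expect the main (minor) obstacle to be simply organizing the case split cleanly and making sure the ``completely full'' hypothesis is invoked with the correct threshold $|V(H)|\ge 2\dim(X)$, matching the definition; the substantive content is entirely carried by Theorem~\ref{thm:maxwell} and Lemma~\ref{lem:comb}, so no new estimates are required.
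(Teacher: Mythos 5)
Your proposal is correct and follows essentially the same route as the paper's proof: Theorem~\ref{thm:maxwell}(ii) for the edge count, then the case split at $|V(H)|\ge 2k=2\dim X$ using the completely full hypothesis with Theorem~\ref{thm:maxwell}(iii), and Lemma~\ref{lem:comb} for the small subgraphs. The extra verification that $l(X)\in\{0,\dots,2k(X)-1\}$ is a sensible sanity check but not needed beyond what the paper already records.
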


\begin{proof}
By Theorem~\ref{thm:maxwell}$(ii)$, $|E|=k|V|-l$. %
Let $H$ be a subgraph of $G$ with $m\geq 2$ vertices.
If $m\geq 2k$ then, since $(G,p)$ is completely full, the subframework $(H,p_H)$ is full in $(X,\unorm{\cdot})$. Thus, by Theorem~\ref{thm:maxwell}$(iii)$, $|E(H)|\leq k|V(H)|-l$. %
If $2\leq  m\leq 2k-1$, then by Lemma \ref{lem:comb},  $|E(H)|\leq |E(K_m)|\leq k|V(H)|-l$. 
Thus $G$ is $(k,l)$-tight.
\end{proof}

\begin{remark}
The  $(k,l)$-sparsity of a multi-graph can be determined for the range $l\in\{0,\ldots,2k-1\}$ by a polynomial time algorithm known as a \emph{pebble game}~\cite{lee-str}. As such, the $(k,l)$-tight conditions obtained above can be verified in $O(|V|^2)$ time. 
\end{remark}
 
 In the case of admissible norms on $\H_2(\bR)$, the following additional graph properties are necessary for minimal infinitesimal rigidity. Here we regard the simple graph $G$ as a member of the wider class of multi-graphs with no loops. We also recall that a subgraph~$H$ of~$G$ is said to be \emph{spanning} if every vertex of~$G$ is the endpoint of some edge of~$H$.
 
 \begin{corollary}
 Let~$\|\cdot\|$ be an admissible norm on $\H_2(\bR)$ and let $(G,p)$ be a completely full, well-positioned and minimally infinitesimally rigid bar-joint framework in $(\H_2(\bR),\unorm{\cdot})$.
  \begin{enumerate}[(i)]
  \item $G$ can be constructed from  a single vertex using a sequence of graph moves of the following form:

 \begin{itemize}
\item Adjoin a new vertex $v$ which is incident with at most three new edges, at most two of which are parallel.
\item Remove a set $E'$ of $i$ edges, where $i\in\{1,2\}$, and let $V'$ be the set of vertices for edges in $E'$.
 Adjoin a new vertex $v$ which is incident with each vertex in $V'$.
 Adjoin $3-i$ additional edges which are each incident with $v$ and  a vertex not in $V'$, such that no three edges in the resulting multi-graph are parallel.
\end{itemize}
 \item If a single edge is added  to $G$ then the resulting multi-graph is an edge disjoint union of three spanning trees. 
\end{enumerate}
 \end{corollary}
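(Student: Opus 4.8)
The plan is to deduce both assertions from Theorem~\ref{thm:tight} together with the product structure of admissible norms on~$\H_2(\bR)$. Since $(\H_2(\bR),\unorm\cdot)$ is admissible and $(k(\H_2(\bR)),l(\H_2(\bR)))=(3,4)$, Theorem~\ref{thm:tight} tells us that $G$ is $(3,4)$-tight; from now on we regard $G$ as a loopless multigraph. It is also worth recording that $\H_2(\bR)=\bR I\oplus Y$, where $Y$ is the $2$-dimensional subspace of trace-zero matrices, and that $\unorm\cdot$ is a product norm over this decomposition: the singular values of $\alpha I+y$ (for $y\in Y$) depend only on $|\alpha|$ and the Frobenius norm $\unorm{y}_{c_2}$, so $\unorm{\alpha I+y}$ is a function of those two quantities, and this function is readily checked to be a symmetric norm on~$\bR^2$. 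Thus Theorem~\ref{ProjThm} applies, with one factor the line $(\bR I,|\cdot|)$ and the other the Euclidean plane $(Y,\unorm\cdot_{c_2})$.

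For part~(ii): let $G^+$ be $G$ with one additional edge $e$ adjoined (possibly parallel to an existing edge), so that $|E(G^+)|=(3|V|-4)+1=3|V(G^+)|-3$. Then $G^+$ is $(3,3)$-sparse: if $H\subseteq G^+$ with $e\notin E(H)$ then $H\subseteq G$, so $|E(H)|\le 3|V(H)|-4\le 3|V(H)|-3$; while if $e\in E(H)$ then $H-e$ is a subgraph of $G$ on the same vertex set, so $|E(H)|=|E(H-e)|+1\le (3|V(H)|-4)+1=3|V(H)|-3$. Hence $G^+$ is $(3,3)$-tight, and the tree-packing theorem of Nash--Williams and Tutte (a loopless multigraph is the edge-disjoint union of $k$ spanning trees exactly when it is $(k,k)$-tight) shows that $G^+$ is the edge-disjoint union of three spanning trees.

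For part~(i): I would first use the product-norm characterisation of Theorem~\ref{ProjThm} to establish, as in the trace-norm case of Theorem~\ref{thm:cyl}, that $G$ is an edge-disjoint union $T\cup L$ of a spanning tree $T$ (the rigid part in the line $\bR I$) and a spanning Laman graph $L$ (the rigid part in the Euclidean plane $Y$). Granting this, part~(i) becomes the combinatorial assertion that every graph of the form $T\cup L$ can be built from a single vertex by the two listed moves, which I would prove by induction on $|V|$, running Henneberg's recursive construction of Laman graphs in tandem with a construction of the tree. Forwards: a Henneberg $0$-extension of $L$ (adjoin a vertex joined to two vertices of $L$) combined with adjoining that vertex as a leaf of $T$ is the first listed move (three new edges, at most two parallel, the parallel case occurring precisely when the tree-leaf edge coincides with a Laman edge); a Henneberg $1$-extension of $L$ (delete an edge of $L$ and adjoin a vertex joined to its two ends and to a third vertex) combined with either adjoining the vertex as a leaf of $T$, or subdividing an edge of $T$ at the vertex, is the second listed move with $i=1$ or $i=2$ respectively. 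Backwards: at each stage one locates a vertex $v$ of degree $2$ or $3$ in $L$ which is simultaneously a leaf of $T$ or a degree-$2$ vertex of $T$, applies the inverse Henneberg move to $L$ (choosing, when $v$ has $L$-degree $3$, an admissible inverse $1$-extension compatible with the tree edges at $v$), undoes the corresponding tree operation, and recurses; the base case $|V|=2$ is the double edge, obtained from a single vertex by the first move.

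The hardest part is the decomposition $G=T\cup L$ for an arbitrary admissible norm on~$\H_2(\bR)$, i.e.\ upgrading Theorem~\ref{thm:cyl} from the trace norm to the general admissible setting via Theorem~\ref{ProjThm}; this amounts to identifying the relevant $(3,4)$-sparsity matroid with the matroid union of the graphic matroid (governing rigidity in the line) and the generic two-dimensional rigidity matroid (governing rigidity in the Euclidean plane). A secondary, purely combinatorial difficulty is ensuring that a suitable vertex for the inductive reduction always exists and that the combined inverse-Henneberg-and-tree operation at that vertex is genuinely an inverse of one of the two stated moves — in particular never introducing three parallel edges and correctly landing in the $i\in\{1,2\}$ cases — which requires care with the positions of the tree edges relative to the Laman neighbours of $v$ and, where necessary, re-choosing the tree or the inverse $1$-extension. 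One could instead attempt a direct combinatorial induction on $G$ via submodularity of the count $3|V(\cdot)|-4-|E(\cdot)|$, but some input beyond $(3,4)$-tightness is unavoidable here, since the doubled four-cycle is $(3,4)$-tight yet admits no inverse of either move.
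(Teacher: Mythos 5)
Your part~(ii) is correct and is essentially the paper's argument: Theorem~\ref{thm:tight} gives $(3,4)$-tightness, one checks that $G$ plus any edge is $(3,3)$-tight, and Nash-Williams/Tutte does the rest. Part~(i), however, rests on a false analytic claim. You assert that every admissible norm on $\H_2(\bR)$ is a \emph{product norm} over the splitting $\bR I\oplus Y$ ($Y$ the trace-zero matrices) because $\unorm{\alpha I+y}$ is a function of $|\alpha|$ and $\unorm{y}_{c_2}$. Being a (symmetric) norm of the pair $(|\alpha|,\unorm{y}_{c_2})$ is much weaker than being their \emph{maximum}, which is what the product norm $\|\cdot\|_\pi$ of Section~\ref{ProductNorms} requires and what Theorem~\ref{ProjThm} uses. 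Writing $\pm\lambda$ for the eigenvalues of $y$, the singular values of $\alpha I+y$ are $|\alpha\pm\lambda|$, so e.g.\ the spectral norm is $|\alpha|+|\lambda|$ (a bicone, not a cylinder) and the Schatten $p$-norms for $p\notin\{1,2\}$ are neither maxima nor sums; only the trace norm gives $2\max\{|\alpha|,|\lambda|\}$. Since the corollary is stated for an arbitrary admissible norm on $\H_2(\bR)$, your route to the decomposition $G=T\cup L$ — and hence to part~(i) — collapses at the first step, and the subsequent ``Henneberg in tandem with a tree'' induction is in any case only sketched, with the reduction step left unresolved by your own admission.

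The paper's proof of~(i) is purely combinatorial and needs none of this: Theorem~\ref{thm:tight} shows $G$ is $(3,4)$-tight, and the two listed moves are exactly the constructive characterisation of $(3,4)$-tight graphs in \cite[Theorem~1.9]{fra-sze}. In particular no input beyond the count is required, contrary to your closing remark. Your proposed obstruction, the doubled four-cycle, is a multigraph, whereas $G$ here is simple (it is the graph of a bar-joint framework; the multigraphs only arise as intermediate stages of the construction), and the obstruction in any case hinges on reading the second move as forbidding the $3-i$ additional edges from landing in $V'$ — under the pinching operation of Frank--Szeg\H{o}, where those edges may attach anywhere subject to the no-three-parallel-edges condition, the doubled four-cycle is obtained from the multigraph on three vertices with edge multiplicities $(2,2,1)$ by pinching the single edge and doubling back the two extra edges. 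So the correct fix is not to import rigidity-theoretic structure but simply to invoke the Frank--Szeg\H{o} characterisation of the $(3,4)$-sparsity count.
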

 
 \begin{proof}
By Theorem \ref{thm:tight}, $G$ is $(3,4)$-tight and so $(i)$ is an application of~\cite[Theorem 1.9]{fra-sze} whereas $(ii)$ follows by an argument of Nash-Williams~\cite{n-w} applied to $(3,3)$-tight graphs.
 \end{proof}

For $G=K_m$, it follows from the Maxwell counting criteria (Theorem~\ref{thm:maxwell}) and Lemma~\ref{lem:comb} that a full and well-positioned bar-joint framework $(K_m,p)$ in an admissible matrix space $(X,\|\cdot\|)$ is not infinitesimally rigid  in the following cases:
\begin{enumerate}[(i)]
\item
$X=\M_n(\bR)$ or $\H_n(\bR)$, $k=\dim X$ and $m\in\{2,\ldots, 2k-1\}$.

\item
$X=\M_n(\bC)$ or $\H_n(\bC)$, $k=\dim X$ and $m\in\{2,\ldots, 2k-2\}$.
\end{enumerate}
We make the following conjectures for larger values of $m$. 

\begin{conjecture}
Let $\|\cdot\|$ be an admissible norm on $X\in \{\M_n(\bF),\H_n(\bF)\}$ and let $k=\dim X$.
\begin{enumerate}[(i)]
\item
If~$\bF=\bR$, then there exists $p\in X^V$ such that $(K_m,p)$ is full, well-positioned and infinitesimally rigid in $(X,\|\cdot\|)$ for all $m\geq 2k$.
\item
If~$\bF=\bC$, then there exists $p\in X^V$ such that $(K_m,p)$ is full, well-positioned and infinitesimally rigid in $(X,\|\cdot\|)$ for all $m\geq 2k-1$.
\end{enumerate}
\end{conjecture}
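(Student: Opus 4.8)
The plan is to reformulate infinitesimal rigidity of $(K_m,p)$ as a rank condition on the differential of the rigidity map, and then to argue by induction on $m$, adjoining at each step a single vertex joined to all existing vertices. Write $(k,l)=(k(X),l(X))$, so $k=\dim X$ and $l=\dim\T(X,\|\cdot\|)$. First I would record the reformulation: for a full, well-positioned placement $p$ of $K_m$ we have $\F(K_m,p)=\ker df_{K_m}(p)$ by Lemma~\ref{lem:flexcondition}, and this kernel contains $\T(K_m,p)$, of dimension exactly $l$ by Lemma~\ref{lem:triv2}; hence $\operatorname{rank} df_{K_m}(p)\le km-l$, with equality precisely when $(K_m,p)$ is infinitesimally rigid, and by Lemma~\ref{lem:comb} the inequality $km-l\le|E(K_m)|$ holds throughout the ranges of $m$ asserted in the conjecture. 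On the open, dense set of well-positioned placements the entries of $df_{K_m}$ vary continuously with $p$ (the support functionals $\varphi_{v,w}$ being continuous where the norm is smooth), so $\operatorname{rank} df_{K_m}$ is lower semicontinuous there; moreover, since $k\ge3$, having full affine span --- hence, by Lemma~\ref{lem:full3}, being full --- is generic once $m\ge k+1$. It follows that the set of full, well-positioned, infinitesimally rigid placements of $K_m$ is open, so it is enough to produce a single such placement for each $m$ in the relevant range.

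For the induction, set $m_0=2k$ if $\bF=\bR$ and $m_0=2k-1$ if $\bF=\bC$. Suppose $(K_m,p)$ is full, well-positioned and infinitesimally rigid for some $m\ge m_0$, with vertex set $V$, and form $K_{m+1}$ on $V\cup\{u\}$. I would show that for generic $y\in X$ the placement $p'$ with $p'|_V=p$ and $p'_u=y$ is again full, well-positioned and infinitesimally rigid. Well-positionedness at the new edges $uw$ holds unless $y-p_w$ is a non-smooth point of $\|\cdot\|$ for some $w\in V$, which excludes only a null set of $y$; and fullness of $p'$ is inherited from $p$. For rigidity, let $z\in\ker df_{K_{m+1}}(p')$; its restriction to $V$ lies in $\ker df_{K_m}(p)=\T(K_m,p)$, so after subtracting the trivial flex of $K_{m+1}$ induced by the corresponding $\eta\in\T(X,\|\cdot\|)$ we may assume $z_v=0$ for all $v\in V$. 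The surviving edge equations read $\varphi_{u,w}(z_u)=0$ for $w\in V$, so $z_u\in\bigcap_{w\in V}\ker\varphi_{u,w}$, which is $\{0\}$ as soon as $\{\varphi_{u,w}:w\in V\}$ spans the dual space $X^*$ --- and then $z$ is trivial. (Conversely, if that span is a proper subspace one obtains a nontrivial flex supported at $u$, using that $\{p_v:v\in V\}$ is full.) Since $|V|=m\ge m_0\ge k=\dim X^*$, there are enough functionals for the spanning condition to be feasible.

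This leaves two things to establish. First, the base case: a single full, well-positioned, infinitesimally rigid placement of $K_{2k}$ (resp.\ $K_{2k-1}$) in $(X,\|\cdot\|)$; a natural attempt is to place the vertices at suitably scaled matrix units or generalised Pauli matrices spanning $X$ and compute $\operatorname{rank} df$ directly. Second, the genericity step in the induction: for $y$ outside an exceptional set, the support functionals $\varphi_{y-p_w}$ ($w\in V$) should span $X^*$. The natural input here is that the ``Gauss map'' $x\mapsto\varphi_x$, defined on the conull set of smooth points, has image affinely spanning the dual unit sphere --- equivalently, the dual norm is a genuine norm on $X^*$ --- so that $m\ge k$ of these functionals generically span; one must rule out the degenerate possibility that $y\mapsto(\varphi_{y-p_w})_{w\in V}$ carries an open set of placements entirely into the proper subvariety of non-spanning tuples.

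The hard part, for both the base case and the genericity step, is that admissible norms need not be smooth or strictly convex --- the trace and spectral norms being the decisive examples --- so the Gauss map is only defined and continuous off a lower-dimensional set. For the model case $(\H_2(\bF),\|\cdot\|_{c_1})$, the cylindrical structure of the trace norm exploited in Section~\ref{sect:applications} supplies explicit rigid placements of $K_6-e$ and $K_7$ and settles the conjecture there, but that argument is special to cylinder norms and does not extend. A uniform proof appears to require a new non-degeneracy statement for unitarily invariant norms, together with an explicit algebraic construction to seed the induction --- which is why we leave it as a conjecture.
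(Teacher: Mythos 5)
The statement you were asked about is a \emph{conjecture} in the paper, not a theorem: the authors give no proof, and your assessment that it remains open is exactly right. What the paper does supply is verification in the single case $X=\H_2(\bF)$ with the trace norm (Theorems~\ref{thm:Km1} and~\ref{thm:hcyl}), and it is worth noting how that partial evidence differs from your sketch. Your plan is the direct vertex-addition ($0$-extension) induction inside the matrix space itself, which stalls on two points you correctly isolate: producing a rigid base placement of $K_{2k}$ (resp.\ $K_{2k-1}$), and showing that for generic $y$ the support functionals $\varphi_{y-p_w}$ span $X^*$ when the norm is not smooth. The paper instead routes everything through Theorem~\ref{ProjThm}: the trace norm on $\H_2(\bF)$ is a product norm, so rigidity decomposes into rigidity of the projected monochrome subframeworks in Euclidean spaces ($\bR^2$ or $\bR^3$) and in $\bR$, where classical genericity (Remark~\ref{rem:regular1}, Laman graphs, block-and-hole graphs) is available. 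The paper's own iterative step in Theorem~\ref{thm:Km1} is also a vertex addition, but performed on the projected Euclidean frameworks, which is precisely what sidesteps your ``Gauss map'' obstruction; as you observe, this trick is special to cylinder-type norms and does not extend to general admissible norms. Two small points in your write-up: your citation of Lemma~\ref{lem:comb} should be read as giving $km-l\le|E(K_m)|$ for $m\ge 2k$ (real case) and $m\ge 2k-1$ (complex case), which is the complementary range to the one the lemma states explicitly; and fullness of frameworks on at least $k+1$ generic points follows from Lemma~\ref{lem:full3} as you say, but for the openness claim you should restrict attention to the (open) set of well-positioned placements, since $\rank df_{K_m}$ is only defined there. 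Neither affects your conclusion.
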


In Section~\ref{sect:applications} we will show that these conjectures 
hold when~$X=\H_2(\bF)$ and the admissible norm is the trace norm.
Namely, we show that there exists $p\in \H_2(\bR)^V$ such that $(K_m,p)$ is full, well-positioned and infinitesimally rigid in $(\H_2(\bR),\|\cdot\|_{c_1})$ for all $m\geq 6$, and, that there exists $p\in \H_2(\bC)^V$ such that $(K_m,p)$ is full, well-positioned and infinitesimally rigid in $(\H_2(\bC),\|\cdot\|_{c_1})$ for all $m\geq 7$.

\section{Product norms}
\label{ProductNorms}
In this section we extend to the setting of product norms a framework colouring technique which was introduced in \cite{kit-pow} to characterise rigidity in $(\bR^d,\|\cdot\|_\infty)$. Our main result is Theorem~\ref{ProjThm}, in which we characterise  infinitesimal rigidity with respect to a product norm in terms of projected monochrome subframeworks. We will apply the results of this section to the admissible matrix space $(\H_2(\bF),\|\cdot\|_{c_1})$ in Section~\ref{sect:applications}.

Let $(X_1,\|\cdot\|_{1}),\ldots,(X_n,\|\cdot\|_{n})$ be a finite collection of finite dimensional real normed linear spaces and let $X=X_1\times \cdots \times X_n$ be the product space. The  \emph{product norm} $\|\cdot\|_\pi$ on $X$ is defined by
\[\|x\|_\pi= \max_{j=1,2,\ldots,n} \, \|x_j\|_{j},\]
for all $x=(x_1,\ldots,x_n)\in X$. For each $j=1,\ldots,n$, denote by $P_j$ the  projection onto $X_j$ given by
\[P_j:X\to X_j, \,\,\,\,\, (x_1,\ldots,x_n)\mapsto x_j,\] and denote
by $P^*_j$ the embedding of $X_j$ into $X$ given by
\[P^*_j:X_j\to X, \,\,\,\,\, y\mapsto (0,\ldots,\overset{j^{\text{th}}}y,\ldots,0).\]
Clearly, $P^*_j$ is an isometry and $\|P_j\|:=\sup\{\|P_j(x)\|_j\colon x\in X,\|x\|_\pi\leq 1\}=1$. Moreover, $\sum_{j=1}^n P_j^* P_j$ and $P_i P_i^*$ are the identity maps on~$X$ and $X_i$, respectively (where, as usual, we write $AB$ for the composition of two linear maps~$A$ and~$B$), and $P_j P_i^*=0$ if $i\ne j$.

\begin{lemma}
\label{Product_SuppFunctional}
Let $x\in X$ with $\|x\|_\pi = \|P_j(x)\|_j=1$.
If $\varphi_j$ is a support functional for $P_j(x)$ in $(X_j,\|\cdot\|_j)$,
then $\varphi=\varphi_j\circ P_j$ is a support functional for $x$ in $(X,\|\cdot\|_\pi)$. 
\end{lemma}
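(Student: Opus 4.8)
The plan is to verify directly that $\varphi = \varphi_j \circ P_j$ satisfies the two defining conditions of a support functional for $x$ in $(X, \|\cdot\|_\pi)$: that $\|\varphi\| \leq 1$ and that $\varphi(x) = 1$. The second is immediate: $\varphi(x) = \varphi_j(P_j(x)) = 1$ since $\varphi_j$ is a support functional for the unit vector $P_j(x)$ in $(X_j, \|\cdot\|_j)$.

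For the norm bound, I would argue that $\varphi$ is the composition of the linear map $P_j \colon X \to X_j$, which has $\|P_j\| = 1$ as noted in the text immediately preceding the lemma, with the linear functional $\varphi_j \colon X_j \to \bR$, which has $\|\varphi_j\| \leq 1$ since it is a support functional. Hence for any $y \in X$ with $\|y\|_\pi \leq 1$ we have $|\varphi(y)| = |\varphi_j(P_j(y))| \leq \|\varphi_j\| \, \|P_j(y)\|_j \leq \|P_j(y)\|_j \leq \|P_j\| \, \|y\|_\pi \leq 1$, so $\|\varphi\| \leq 1$. Combined with $\varphi(x) = 1$ (which also forces $\|\varphi\| = 1$), this shows $\varphi$ is a support functional for $x$.

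There is essentially no obstacle here; the statement is a routine consequence of the fact that $P_j$ is a norm-one projection and that support functionals are by definition norm-contractive. The only point worth stating carefully is that $\varphi$ is indeed linear (clear, as a composition of linear maps) and real-valued (clear, since $\varphi_j$ is), so that it genuinely is a candidate support functional. I would present the proof in one short paragraph chaining the inequalities above, perhaps remarking that the conclusion $\|\varphi\|=1$ is automatic once $\varphi(x)=1$ and $\|\varphi\|\le 1$ are both in hand.
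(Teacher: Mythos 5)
Your proof is correct and is essentially identical to the paper's, which likewise just notes that $\|\varphi\|\leq\|\varphi_j\|\,\|P_j\|\leq 1$ and $\varphi(x)=\varphi_j(P_j(x))=1$. Nothing further is needed.
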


\begin{proof}
We have $\|\varphi\|\leq \|\varphi_j\|\,\|P_j\|=\|\varphi_j\|\le 1$ and $\varphi(x)=1$.
\end{proof}

\subsection{Framework colours}
\label{sect:FrameworkColours}
For $x=(x_1,\ldots,x_n)\in X$, we write
\[\kappa(x) = \big\{j\in\{1,2,\dots,n\}:\|x\|_\pi = \|x_j\|_j\big\}.\]
We think of the non-empty set $\kappa(x)$ as a set of colours assigned to~$x$ by the product norm~$\|\cdot\|_\pi$ .
\begin{lemma}
  \label{lem:Product_limits}
  If~$x$ is a unit vector in~$X$ and $\kappa(x)=\{j\}$ is a singleton,
  then there exists~$\delta>0$ so that $\kappa(x+y)=\{j\}$ whenever
  $y\in X$ with $\|y\|_\pi\leq \delta$.
\end{lemma}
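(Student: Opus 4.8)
The claim is a continuity/openness statement for the colour map $\kappa$ at vectors with a single colour. The plan is to exploit the strict inequalities that define $\kappa(x)=\{j\}$ and the continuity of each seminorm $x\mapsto\|x_i\|_i=\|P_i(x)\|_i$ on $X$. First I would normalise: since $\kappa(x)=\{j\}$ and $\|x\|_\pi=1$, we have $\|x_j\|_j=1$ and $\|x_i\|_i<1$ for every $i\ne j$. Because there are only finitely many indices, set
\[\epsilon=1-\max_{i\ne j}\|x_i\|_i>0.\]

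Next I would control the perturbation. For $y\in X$ with $\|y\|_\pi\le\delta$ we have $\|y_i\|_i=\|P_i(y)\|_i\le\|P_i\|\,\|y\|_\pi\le\delta$ for every $i$, since $\|P_i\|=1$. By the triangle inequality, for each $i\ne j$,
\[\|x_i+y_i\|_i\le\|x_i\|_i+\delta\le 1-\epsilon+\delta,\]
while
\[\|x_j+y_j\|_j\ge\|x_j\|_j-\delta=1-\delta.\]
Now choose $\delta=\epsilon/3$ (any $\delta<\epsilon/2$ works): then for $i\ne j$ we get $\|x_i+y_i\|_i\le 1-\epsilon+\epsilon/3=1-2\epsilon/3<1-\epsilon/3\le\|x_j+y_j\|_j$. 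Hence $\|x+y\|_\pi=\max_i\|x_i+y_i\|_i$ is attained only at $i=j$, i.e. $\kappa(x+y)=\{j\}$, as required.

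There is essentially no obstacle here: the only subtlety is remembering that the relevant perturbation bound is componentwise, which is immediate from $\|P_i\|=1$ (already recorded in the paragraph before the lemma), and that the finiteness of the index set lets us take a uniform gap $\epsilon$. I would present the argument in two or three lines along the lines above.
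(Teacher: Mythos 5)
Your proof is correct and follows essentially the same route as the paper: both arguments take the gap $\epsilon=\|x_j\|_j-\max_{i\ne j}\|x_i\|_i$, set $\delta=\epsilon/3$, and conclude via the triangle inequality applied componentwise. No issues.
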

\begin{proof}
  Set $\delta=\tfrac13(\|x_j\|_j-\max\limits_{\{k\colon k\ne
    j\}}\|x_k\|_k)$. Since $\kappa(x)=\{j\}$, we have $\delta>0$. For
  $\|y\|_\pi\le\delta$ and~$k\in \{1,2,\dots,n\}\setminus\{j\}$,
  \[\|(x+y)_j\|_j-\|(x+y)_k\|_k\geq \|x_j\|_j-\|y_j\|_j-(\|x_k\|_k+\|y_k\|_k)\geq 3\delta - 2\|y\|_\pi>0,\]
  so $\kappa(x+y)=\{j\}$.
\end{proof}

Let $(G,p)$ be a bar-joint framework in $(X,\|\cdot\|_\pi)$. There is a natural edge-labelling $\kappa_p$
where for each edge $vw\in E$, we define $\kappa_p(vw) = \kappa(p_v-p_w)$.
An edge $vw\in E$ is said to have \emph{framework colour $j$} if $j\in \kappa_p(vw)$. The set of all edges in $G$ which have framework colour $j$ is denoted $E_j$, and we have $E=E_1\cup\dots\cup E_n$.

Let $G_j=(V,E_j)$ denote the subgraph of $G$ with the same vertex set as $G$ and edge set $E_j$ consisting of all edges with framework colour $j$. We refer to $G_j$ as a \emph{monochrome subgraph} of $G$. Note that $G_j$ may contain vertices of degree $0$ (even if $G$ does not).  
The pair $(G_j,p)$ is a bar-joint framework in $X$ and is referred to as the \emph{monochrome subframework of $(G,p)$ with framework colour $j$}.

For each $j=1,\ldots,n$, we write $p_j=P_j\circ p$.
If $vw\in E_j$, then \[\|p_j(v)-p_j(w)\|_j=\|p(v)-p(w)\|_\pi\ne 0,\] so
$(G_j,p_j)$ is a bar-joint framework in~$X_j$. We call $(G_j,p_j)$ the
\emph{projected monochrome subframework  with framework colour
  $j$}. 

If~$(G_j,p_j)$ is well-positioned
in~$(X_j,\|\cdot\|_j)$ and $vw\in E_j$, then we write
$\varphi_{v,w}^j$ for the support functional at the unit vector
$P_j(p_0)$ in $X_j$, where $p_0=\frac{p_v-p_w}{\|p_v-p_w\|_\pi}$.

\begin{proposition}
\label{prop:ProductWP}
A framework $(G,p)$ in~$(X,\|\cdot\|_\pi)$ is well-positioned in $(X,\|\cdot\|_\pi)$  if and only if 
\begin{enumerate}[(i)]
\item each edge $vw\in E$ has exactly one framework colour, and
\item $(G_j,p_j)$ is well-positioned in $(X_j,\|\cdot\|_j)$  for each $j=1,2,\ldots,n$.
\end{enumerate}
Moreover, in this case we have $\varphi_{v,w}=\varphi_{v,w}^j\circ
P_j$ for every edge~$vw\in E_j$.
\end{proposition}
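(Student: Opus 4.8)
The statement has two implications, and a "moreover" clause identifying the support functionals. The plan is to handle the forward direction ($(G,p)$ well-positioned $\Rightarrow$ (i), (ii)) by a direct smoothness argument using the structure of the product norm, and the backward direction by combining Lemma~\ref{lem:Product_limits} with Lemma~\ref{Product_SuppFunctional} to show uniqueness of the support functional at $p_v-p_w$. By Lemma~\ref{lem:flexcondition}, well-positionedness is exactly smoothness of $\|\cdot\|_\pi$ at each edge vector $p_v-p_w$, so everything reduces to a pointwise statement about the product norm: for a nonzero $x\in X$, the norm $\|\cdot\|_\pi$ is smooth at $x$ if and only if $\kappa(x)$ is a singleton $\{j\}$ and $\|\cdot\|_j$ is smooth at $P_j(x)=x_j$, and in that case the unique support functional at $x/\|x\|_\pi$ is $\varphi_j\circ P_j$, where $\varphi_j$ is the unique support functional at $x_j/\|x_j\|_j$ in $X_j$.

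\emph{Forward direction.} Suppose $\|\cdot\|_\pi$ is smooth at $x:=p_v-p_w\ne 0$; normalise so $\|x\|_\pi=1$. First I would show $\kappa(x)$ is a singleton: if $j,k\in\kappa(x)$ with $j\ne k$, pick any support functional $\psi_j$ for $x_j$ in $X_j$ and any support functional $\psi_k$ for $x_k$ in $X_k$; then by Lemma~\ref{Product_SuppFunctional} both $\psi_j\circ P_j$ and $\psi_k\circ P_k$ are support functionals for $x$ in $(X,\|\cdot\|_\pi)$, and they are distinct (they differ on the vectors $P_j^*P_j x$ vs.\ $P_k^*P_k x$, e.g.\ $\psi_j\circ P_j$ vanishes on $P_k^*X_k$ while $\psi_k\circ P_k$ takes the value $1$ there), contradicting smoothness. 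So $\kappa(x)=\{j\}$. Next, if $\|\cdot\|_j$ were not smooth at $x_j$, there would be two distinct support functionals $\psi_j,\psi_j'$ for $x_j$, and then $\psi_j\circ P_j\ne \psi_j'\circ P_j$ would again be two distinct support functionals for $x$, a contradiction. Hence (i) and (ii) hold for the edge $vw$; running this over all edges gives the claim. Along the way this also pins down the support functional: since $\kappa(x)=\{j\}$ and $\varphi_{v,w}$ is the unique support functional at $p_0=x/\|x\|_\pi$, and $\varphi_{v,w}^j\circ P_j$ is a support functional for $p_0$ by Lemma~\ref{Product_SuppFunctional}, uniqueness forces $\varphi_{v,w}=\varphi_{v,w}^j\circ P_j$.

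\emph{Backward direction.} Now assume (i) and (ii) hold. Fix $vw\in E$, let $x=p_v-p_w$, $\|x\|_\pi=1$, and let $\{j\}=\kappa(x)$. By (ii), $\|\cdot\|_j$ is smooth at $x_j$, with unique support functional $\varphi_j$; set $\varphi:=\varphi_j\circ P_j$, which is a support functional for $x$ by Lemma~\ref{Product_SuppFunctional}. To conclude that $\|\cdot\|_\pi$ is smooth at $x$ I must show $\varphi$ is the \emph{only} support functional at $x$. Equivalently, by Lemma~\ref{lem:smooth}, it suffices to show the directional derivative $\lim_{t\to 0}\tfrac1t(\|x+ty\|_\pi-1)$ exists for every $y\in X$ (this limit, when it exists for all $y$, is automatically the unique support functional). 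Here I would invoke Lemma~\ref{lem:Product_limits}: since $\kappa(x)$ is the singleton $\{j\}$, there is $\delta>0$ such that $\kappa(x+ty)=\{j\}$ for all $|t|$ small (depending on $y$), i.e.\ $\|x+ty\|_\pi=\|x_j+ty_j\|_j$ for $|t|$ small. Hence
\[
\lim_{t\to 0}\frac{\|x+ty\|_\pi-\|x\|_\pi}{t}
=\lim_{t\to 0}\frac{\|x_j+ty_j\|_j-\|x_j\|_j}{t}
=\varphi_j(y_j)=\varphi(y),
\]
the middle limit existing because $\|\cdot\|_j$ is smooth at $x_j$ (Lemma~\ref{lem:smooth} applied in $X_j$). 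Thus the directional derivative exists for all $y$, so $\|\cdot\|_\pi$ is smooth at $x$; as $vw$ was arbitrary, $(G,p)$ is well-positioned, and the displayed computation re-confirms $\varphi_{v,w}=\varphi_{v,w}^j\circ P_j$.

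\emph{Main obstacle.} The only genuinely delicate point is the backward direction: smoothness is a statement about \emph{all} support functionals, not just the existence of one nice one, so I cannot merely exhibit $\varphi_j\circ P_j$ — I must rule out every competitor. Lemma~\ref{lem:Product_limits} is exactly the tool that does this, by showing the product norm coincides locally (along any line through $x$) with the smooth norm $\|\cdot\|_j$ on the $j$-th coordinate; once that local coincidence is in hand the directional-derivative criterion of Lemma~\ref{lem:smooth} closes the argument cleanly. The bookkeeping to make "distinct support functionals stay distinct after composing with $P_j$" precise in the forward direction is routine: evaluate on a vector supported in the relevant coordinate block where the two functionals already differ.
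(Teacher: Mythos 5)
Your proof is correct and follows essentially the same route as the paper's: both directions reduce to uniqueness of the support functional via Lemma~\ref{lem:flexcondition}, the forward direction uses Lemma~\ref{Product_SuppFunctional} to manufacture two distinct support functionals from either two colours or two functionals on a factor (the paper distinguishes them by composing with $P_i^*$, you by evaluating on a coordinate block --- the same computation), and the backward direction uses Lemma~\ref{lem:Product_limits} to identify the directional derivative of $\|\cdot\|_\pi$ at $p_0$ with that of $\|\cdot\|_j$ at $P_j(p_0)$. No gaps.
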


\begin{proof}
Let $vw\in E$ and write $p_0=\frac{p_v-p_w}{\|p_v-p_w\|_\pi}$. 
If $(G,p)$ is well-positioned in $(X,\|\cdot\|_\pi)$ then by %
Lemma~\ref{lem:flexcondition},
the product norm is smooth at $p_0$ and so $p_0$ has  exactly one support functional.
Suppose $i$ and $j$ are two distinct framework colours for $vw$.
Then $\|p_v-p_w\|_\pi = \|P_i(p_v-p_w)\|_i = \|P_j(p_v-p_w)\|_j$
and so $1=\|p_0\|_\pi = \|P_i(p_0)\|_i = \|P_j(p_0)\|_j$.
Choose support functionals $\varphi_i$ and $\varphi_j$ for $P_i(p_0)$ and $P_j(p_0)$ in $(X_i,\|\cdot\|_i)$ and $(X_j,\|\cdot\|_j)$ respectively.
By Lemma~\ref{Product_SuppFunctional}, 
both $\varphi_i\circ P_i$ and $\varphi_j\circ P_j$ are support functionals for $p_0$, so by smoothness, $\varphi_i\circ P_i=\varphi_j\circ P_j$.
Now $\varphi_i = \varphi_i\circ P_i\circ P^*_i
= \varphi_j\circ P_j\circ P^*_i=0$.
This is a contradiction since $\varphi_i(P_i(p_0))=1$ and so~$(i)$ holds.

Suppose $vw\in E_j$. 
By Lemma~\ref{Product_SuppFunctional}, 
if $\psi_1$ and $\psi_2$ are two support functionals
for $P_j(p_0)$, then $\psi_1\circ P_j$ and $\psi_2\circ P_j$ are both support functionals for $p_0$, hence are equal.
Now $\psi_1 = \psi_1\circ P_j\circ P^*_j=\psi_2\circ P_j\circ P^*_j=\psi_2$ and so $P_j(p_0)$ has exactly one support functional.
Thus the norm $\|\cdot\|_j$ is smooth at $P_j(p_0)$ and 
so $(G_j,p_j)$ is well-positioned in $(X_j,\|\cdot\|_j)$ by Lemma~\ref{lem:flexcondition}.
This proves~$(ii)$.

For the converse, if $(i)$ and $(ii)$ hold then consider an edge
$vw\in E$ and again write $p_0=\frac{p_v-p_w}{\|p_v-p_w\|_\pi}$.  By
$(i)$, $vw$ has a unique framework colour, say~$j$.  By
Lemma~\ref{lem:Product_limits}, for any $z\in X$ we have
\[\lim_{t\to0}\frac{1}{t}(\|p_0+tz\|_\pi-\|p_0\|_\pi)
=\lim_{t\to0}\frac{1}{t}(\|P_j(p_0)+tP_j(z)\|_j-\|P_j(p_0)\|_j),\]
where, by $(ii)$ and Lemma \ref{lem:flexcondition}, the latter limit exists (and is in fact equal to $\varphi_{v,w}^j(P_j(p_0))$). Thus the product norm is smooth at $p_0$ and so $(G,p)$ is well-positioned in $(X,\|\cdot\|_\pi)$.

The final claim follows directly from Lemma~\ref{Product_SuppFunctional}
and the uniqueness of the support functional~$\varphi_{v,w}$.
\end{proof}

If $z:V\to X$, then we write 
$z_j:V\to X_j$, $v\mapsto P_j(z(v))$, and define the linear isomorphism
\[\Phi_V: X^V\to \bigoplus_{j=1}^n X_j^V, \,\,\,\,\,\,\,\,\, z\mapsto (z_1,\ldots,z_n).\]
By Proposition~\ref{prop:ProductWP}, the monochrome edge sets $E_1,\dots,E_n$ arising from a well-positioned bar-joint framework $(G,p)$ partition $E$. Hence, writing $\lambda_j\colon E_j\to \bR$ for the restriction to~$E_j$ of a map $\lambda:E\to \bR$, we have a linear isomorphism
\[\Phi_E: \bR^E\to \bigoplus_{j=1}^n \bR^{E_j}, \,\,\,\,\,\,\,\, \lambda\mapsto (\lambda_1,\ldots, \lambda_n).\]

\begin{corollary}
\label{cor:diff_product}
If $(G,p)$ is well-positioned in $(X,\|\cdot\|_\pi)$, then 
\[df_G(p) = \Phi_E^{-1}\circ (df_{G_1}(p_1)\oplus\cdots \oplus df_{G_n}(p_n))\circ \Phi_V.\]
\end{corollary}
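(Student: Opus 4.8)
The plan is to verify the claimed factorisation of the differential $df_G(p)$ by a direct computation, using the formulas for the differential of the rigidity map from Lemma~\ref{lem:flexcondition}(ii) together with the identification of support functionals for the product norm obtained in Proposition~\ref{prop:ProductWP}. Since $(G,p)$ is well-positioned, each edge $vw\in E$ has a unique framework colour $j=j(vw)$, so the sets $E_1,\dots,E_n$ partition $E$ and the isomorphisms $\Phi_V$ and $\Phi_E$ are well-defined; moreover each projected monochrome subframework $(G_j,p_j)$ is well-positioned in $(X_j,\|\cdot\|_j)$, so each $df_{G_j}(p_j)$ exists and is given by the expected formula.

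First I would fix $z\in X^V$ and compute $(df_G(p)z)_{vw}$ for an edge $vw\in E_j$. By Lemma~\ref{lem:flexcondition}(ii), this equals $\varphi_{v,w}(z_v-z_w)$, and by the final assertion of Proposition~\ref{prop:ProductWP} we have $\varphi_{v,w}=\varphi_{v,w}^j\circ P_j$, so this equals $\varphi_{v,w}^j(P_j(z_v-z_w))=\varphi_{v,w}^j((z_j)_v-(z_j)_w)$ where $z_j=P_j\circ z$. On the other hand, applying Lemma~\ref{lem:flexcondition}(ii) to the well-positioned framework $(G_j,p_j)$, the component of $df_{G_j}(p_j)(z_j)$ indexed by the edge $vw\in E_j$ is precisely $\varphi_{v,w}^j((z_j)_v-(z_j)_w)$. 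Thus for every edge $vw\in E_j$,
\[
(df_G(p)z)_{vw} = \bigl(df_{G_j}(p_j)(z_j)\bigr)_{vw}.
\]

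Next I would assemble this across all colours. Since $E=E_1\sqcup\cdots\sqcup E_n$, the element $df_G(p)z\in\bR^E$ corresponds under $\Phi_E$ to the tuple whose $j$th entry is the restriction of $df_G(p)z$ to $E_j$; by the identity just established, that restriction is exactly $df_{G_j}(p_j)(z_j)$. Hence $\Phi_E(df_G(p)z) = (df_{G_1}(p_1)(z_1),\dots,df_{G_n}(p_n)(z_n)) = (df_{G_1}(p_1)\oplus\cdots\oplus df_{G_n}(p_n))(\Phi_V(z))$. Rearranging gives $df_G(p)z = \Phi_E^{-1}\circ(df_{G_1}(p_1)\oplus\cdots\oplus df_{G_n}(p_n))\circ\Phi_V(z)$, and since $z$ was arbitrary the operator identity follows.

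There is no real obstacle here; the proof is essentially bookkeeping once Proposition~\ref{prop:ProductWP} is in hand. The one point requiring a little care is making sure the indexing conventions line up—that the ``$j$th block'' of $\Phi_E$ really is the restriction to the monochrome edge set $E_j$, and that each edge contributes to exactly one block (which is guaranteed by the uniqueness of framework colours for well-positioned frameworks, part~(i) of Proposition~\ref{prop:ProductWP}). With that observed, the chain of equalities above completes the argument.
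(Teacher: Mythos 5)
Your argument is correct and follows exactly the paper's route: the paper's proof is the one-line ``Apply Lemma~\ref{lem:flexcondition} and Proposition~\ref{prop:ProductWP}'', and your computation is precisely the bookkeeping that this instruction abbreviates, using the formula $\varphi_{v,w}=\varphi_{v,w}^j\circ P_j$ and the partition $E=E_1\sqcup\cdots\sqcup E_n$ guaranteed by Proposition~\ref{prop:ProductWP}. Nothing further is needed.
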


\begin{proof}
Apply Lemma  \ref{lem:flexcondition} and Proposition  \ref{prop:ProductWP}.
\end{proof}

\begin{corollary}
\label{cor:ProductFlex}
Let $(G,p)$ be a well-positioned framework in $(X,\|\cdot\|_\pi)$.
\begin{enumerate}[(i)]
\item $z\in \F(G,p)$ if and only if  $z_j\in\F(G_j,p_j)$ for each $j=1,\ldots,n$.
\item The map
\[\Phi_{(G,p)}:\F(G,p) \to \bigoplus_{j=1}^n \F(G_j,p_j),\,\,\,\,\,\,
z\mapsto (z_1,\ldots,z_n),\]
is a linear isomorphism.
\item $\dim\F(G,p)=\sum_{j=1}^n\dim\F(G_j,p_j)$.
\end{enumerate}
\end{corollary}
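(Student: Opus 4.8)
The plan is to deduce all three parts directly from Corollary~\ref{cor:diff_product}, using the fact that for a well-positioned framework $\F(G,p)=\ker df_G(p)$ (and similarly $\F(G_j,p_j)=\ker df_{G_j}(p_j)$, since each $(G_j,p_j)$ is well-positioned by Proposition~\ref{prop:ProductWP}).

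First I would prove~(i). Since $\Phi_E$ and $\Phi_V$ are linear isomorphisms, Corollary~\ref{cor:diff_product} shows that $df_G(p)(z)=0$ if and only if $(df_{G_1}(p_1)\oplus\cdots\oplus df_{G_n}(p_n))(\Phi_V(z))=0$, which in turn holds if and only if $df_{G_j}(p_j)(z_j)=0$ for each $j$, because a direct sum of linear maps annihilates a tuple precisely when each component map annihilates the corresponding coordinate. Rewriting this in terms of flex spaces gives $z\in\F(G,p)$ if and only if $z_j\in\F(G_j,p_j)$ for all $j$, which is~(i).

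Next, for~(ii), the map $\Phi_{(G,p)}$ is just the restriction of the linear isomorphism $\Phi_V$ to the subspace $\F(G,p)$. By~(i), $\Phi_V$ carries $\F(G,p)$ into $\bigoplus_{j=1}^n\F(G_j,p_j)$, and conversely any tuple $(z_1,\dots,z_n)$ in this direct sum is the image under $\Phi_V$ of some $z\in X^V$ which, again by~(i), lies in $\F(G,p)$. Hence $\Phi_{(G,p)}$ is a bijection, and being a restriction of a linear map it is linear, so it is a linear isomorphism. Finally, (iii) is immediate from~(ii): isomorphic vector spaces have equal dimension, and the dimension of a finite direct sum is the sum of the dimensions, giving $\dim\F(G,p)=\sum_{j=1}^n\dim\F(G_j,p_j)$.

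I do not anticipate a genuine obstacle here; the result is essentially a bookkeeping consequence of Corollary~\ref{cor:diff_product}. The only point requiring a small amount of care is the surjectivity half of~(ii): one must check that every tuple in the target direct sum genuinely arises from a flex of $(G,p)$, rather than merely that flexes map into the direct sum. This is handled by the "if" direction of~(i), so once~(i) is in hand the rest is routine.
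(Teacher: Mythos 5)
Your proposal is correct and follows exactly the paper's route: the paper also deduces all three parts from Corollary~\ref{cor:diff_product} together with the observation that $\Phi_{(G,p)}$ is the restriction of $\Phi_V$ to $\ker df_G(p)=\F(G,p)$. You have merely spelled out the bookkeeping (including the use of Proposition~\ref{prop:ProductWP} to ensure each $(G_j,p_j)$ is well-positioned) that the paper leaves implicit.
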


\begin{proof}
The statements   follow immediately from Corollary \ref{cor:diff_product} and the observation that $\Phi_{(G,p)}$ is the restriction of $\Phi_V$ to the kernel of $df_G(p)$.
\end{proof}

\subsection{Rigid motions of product spaces}

We will now see that the infinitesimal rigid motions
of~$(X,\|\cdot\|_\pi)$ coincide with direct sums of infinitesimal rigid motions of
the factor spaces~$(X_j,\|\cdot\|_j)$.

Given $\alpha_j\in \R(X_j,\|\cdot\|_j)$ and $\eta_j\in\T(X_j,\|\cdot\|_j)$ for $j=1,\dots,n$, let us define
\[\bigoplus_{j=1}^n \alpha_j\colon X\times [-1,1]\to X,\quad (x,t) \mapsto \sum_{j=1}^n P^*_j \alpha_j(P_j(x),t)\]
and 
\[ \bigoplus_{j=1}^n\eta_j\colon X\to X,\quad x\mapsto \sum_{j=1}^n P_j^* \eta_j(P_j(x)).\]
\begin{lemma}
\label{lem:productrigidmotion} 
For $j=1,\dots,n$, let $\alpha_j\in \R(X_j,\|\cdot\|_j)$ and
let~$\eta_j\in \T(X_j,\|\cdot\|_j)$ be the infinitesimal rigid motion
induced by~$\alpha_j$, and consider
$\alpha=\bigoplus_{j=1}^n \alpha_j$. We have
\begin{enumerate}[(i)]
\item $\alpha\in \R(X,\|\cdot\|_\pi)$; and
\item the infinitesimal rigid motion induced by~$\alpha$ is $\eta:=\bigoplus_{j=1}^n \eta_j$; and
\item $\eta_j=P_j\circ \eta\circ P^*_j$ for each $j=1,\ldots,n$.
\end{enumerate}
\end{lemma}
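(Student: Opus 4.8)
The plan is to verify the three claims essentially by unravelling the definitions and using the fact that $P_j^*$ is an isometric embedding, $P_jP_j^*=\id_{X_j}$, $P_jP_i^*=0$ for $i\neq j$, and $\sum_j P_j^*P_j = \id_X$. First I would check property (i), namely that $\alpha$ is a rigid motion of $(X,\|\cdot\|_\pi)$. Property (a) of the definition, $\alpha_x(0)=x$, follows from $\alpha_j(P_j(x),0)=P_j(x)$ and $\sum_j P_j^*P_j(x)=x$. For property (b), differentiability at $t=0$ holds since each $t\mapsto \alpha_j(P_j(x),t)$ is differentiable at $t=0$ and $\alpha$ is a finite sum of compositions of these with the fixed linear maps $P_j^*$. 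The distance-preservation property (c) is the substantive point: for $x,y\in X$ we have
\[
\alpha_x(t)-\alpha_y(t) = \sum_{j=1}^n P_j^*\bigl(\alpha_j(P_j(x),t)-\alpha_j(P_j(y),t)\bigr),
\]
and applying $P_i$ to this and using $P_iP_j^* = \delta_{ij}\id_{X_i}$ shows that $P_i(\alpha_x(t)-\alpha_y(t)) = \alpha_i(P_i(x),t)-\alpha_i(P_i(y),t)$, whose $\|\cdot\|_i$-norm equals $\|P_i(x)-P_i(y)\|_i = \|P_i(x-y)\|_i$ by property (c) for $\alpha_i$. Taking the maximum over $i$ gives $\|\alpha_x(t)-\alpha_y(t)\|_\pi = \|x-y\|_\pi$, as required.

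Next I would establish (ii), that the infinitesimal rigid motion induced by $\alpha$ is $\eta=\bigoplus_j\eta_j$. By definition the induced infinitesimal rigid motion sends $x$ to $\frac{d}{dt}\alpha_x(t)\big|_{t=0}$; since $\alpha_x(t) = \sum_j P_j^*\alpha_j(P_j(x),t)$ and each $P_j^*$ is a fixed continuous linear map, differentiating term by term gives
\[
\frac{d}{dt}\alpha_x(t)\Big|_{t=0} = \sum_{j=1}^n P_j^*\,\frac{d}{dt}\alpha_j(P_j(x),t)\Big|_{t=0} = \sum_{j=1}^n P_j^*\,\eta_j(P_j(x)) = \Bigl(\bigoplus_{j=1}^n\eta_j\Bigr)(x),
\]
using that $\eta_j$ is by hypothesis the infinitesimal rigid motion induced by $\alpha_j$. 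Finally, (iii) follows from (ii) by a direct computation: for $y\in X_j$,
\[
(P_j\circ\eta\circ P_j^*)(y) = P_j\Bigl(\sum_{i=1}^n P_i^*\eta_i(P_i(P_j^*(y)))\Bigr) = \sum_{i=1}^n (P_jP_i^*)\,\eta_i\bigl((P_iP_j^*)(y)\bigr) = \eta_j(y),
\]
since $P_iP_j^* = 0$ unless $i=j$, in which case it is the identity on $X_j$.

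I do not anticipate a serious obstacle here; the only point requiring a little care is confirming that the distance-preservation identity in (i) genuinely reduces coordinatewise — that is, that the product-norm distance decouples under the projections $P_i$ — which is exactly what the relations $P_iP_j^*=\delta_{ij}\id$ deliver. Everything else is a routine verification that finite sums of the building-block maps inherit the continuity and differentiability properties.
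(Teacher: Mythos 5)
Your proof is correct and follows essentially the same route as the paper: verify the rigid-motion axioms coordinatewise using $P_iP_j^*=\delta_{ij}\id$ and $\|\sum_j P_j^*z_j\|_\pi=\max_j\|z_j\|_j$, differentiate term by term for (ii), and compute $P_j\circ\eta\circ P_j^*$ directly for (iii). The only cosmetic omission is an explicit remark that each $\alpha_x$ is continuous on all of $[-1,1]$ (part of the definition of a rigid motion), which is as routine as the differentiability check you do give.
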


\begin{proof}
It is clear that $\alpha_x\colon[-1,1]\to X$ is continuous for each $x\in X$
and 
\[\alpha_x(0) = \sum_{j=1}^n P^*_j \alpha_j(P_j(x),0)
=\sum_{j=1}^n P^*_j P_j(x) = x.\]
Also, for any $x,y\in X$ and $t\in[-1,1]$,
\begin{eqnarray*}
\|\alpha_x(t)-\alpha_y(t)\|_\pi
&=& \|\sum_{j=1}^n P^*_j\alpha_j(P_j(x),t)-\sum_{j=1}^n P^*_j\alpha_j(P_j(y),t)\|_\pi \\
&=& \max_{j=1,\ldots,n} \|\alpha_j(P_j(x),t)-\alpha_j(P_j(y),t)\|_j \\
&=& \max_{j=1,\ldots,n} \|P_j(x)-P_j(y)\|_j \\
&=& \|x-y\|_\pi.
\end{eqnarray*}
Note that for each $x\in X$,  
\[\alpha'_x(0)=\sum_{j=1}^n P^*_j ((\alpha_j)_{P_j(x)})'(0) =\sum_{j=1}^n P^*_j\eta_j(P_j(x)).\] 
Thus $\alpha\in \R(X,\|\cdot\|_\pi)$ and $\eta=\sum_{j=1}^n P_j^*\circ \eta_j\circ P_j$ is its induced infinitesimal rigid motion.
Finally, for each $j=1,\ldots,n$,
\[P_j\circ \eta\circ  P_j^*
=\sum_{k=1}^n (P_j P^*_k)\circ \eta_k\circ (P_k P_j^*)
=\eta_j.\qedhere\]
\end{proof}

For a finite dimensional normed vector space~$Y$, we write $\Isom(Y)$
for the set of linear isometries of~$Y$, equipped with the norm topology.

\begin{proposition}\label{prop:isomproduct}
If~$A\colon
  [-1,1]\to \Isom(X)$ is continuous and $A(0)=I_X$, then there exists
  $\delta>0$ so that 
  $A(t)\in \bigoplus_{i=1}^n \Isom(X_i)$ for every~$t\in [-\delta,\delta]$.
\end{proposition}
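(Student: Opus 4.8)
The plan is to reduce the proposition to a quantitative, static fact: there is an absolute constant (I will take $\tfrac13$) such that every $A\in\Isom(X)$ with $\|A-I_X\|<\tfrac13$ in the operator norm for $\|\cdot\|_\pi$ already lies in $\bigoplus_{i=1}^n\Isom(X_i)$, i.e.\ is of the form $x\mapsto\sum_{i=1}^nP_i^*A_i(P_ix)$ with each $A_i\in\Isom(X_i)$. Granting this, continuity of $t\mapsto A(t)$ together with $A(0)=I_X$ yields a $\delta>0$ with $\|A(t)-I_X\|<\tfrac13$ for all $t\in[-\delta,\delta]$, and the proposition follows at once. So I fix such an $A$ and set $A_i:=P_iAP_i^*\colon X_i\to X_i$. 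Two things need to be proved: (1) each $A_i$ is a surjective linear isometry of $X_i$, and (2) $P_iAP_j^*=0$ whenever $i\ne j$. Once these are established, the identities $\sum_kP_k^*P_k=I_X$ and $P_kP_j^*=\delta_{kj}I_{X_j}$ give $AP_j^*=P_j^*A_j$, hence $A=\sum_jP_j^*A_jP_j=\bigoplus_jA_j$, as wanted.

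For (1), fix a unit vector $v\in X_i$. Then $P_i^*v$ has norm $1$ and colour $\kappa(P_i^*v)=\{i\}$, with a ``gap'' of $1$ over the remaining coordinates. Since $\|AP_i^*v-P_i^*v\|_\pi\le\|A-I_X\|<\tfrac13$, a direct colour-stability estimate---$\|P_i(AP_i^*v)\|_i>\tfrac23$ while $\|P_k(AP_i^*v)\|_k<\tfrac13$ for $k\ne i$ (compare Lemma~\ref{lem:Product_limits})---shows $\kappa(AP_i^*v)=\{i\}$, so that $\|A_iv\|_i=\|P_i(AP_i^*v)\|_i=\|AP_i^*v\|_\pi=\|P_i^*v\|_\pi=1$. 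By homogeneity $A_i$ is a linear isometry of $X_i$; being injective on a finite-dimensional space, it is onto, so $A_i\in\Isom(X_i)$.

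Fact (2) is the crux. Fix $i\ne j$ and a unit vector $w\in X_j$, put $q:=P_iAP_j^*w\in X_i$, and suppose for contradiction that $q\ne0$. The geometric observation is that for \emph{any} unit vector $v_0\in X_i$ the segment $\{\,P_i^*v_0+sP_j^*w:0\le s\le1\,\}$ lies on the unit sphere of $(X,\|\cdot\|_\pi)$, since its norm is $\max(\|v_0\|_i,s\|w\|_j)=1$; applying the isometry $A$, so does $\{\,AP_i^*v_0+sAP_j^*w:0\le s\le1\,\}$. Using the surjectivity of $A_i$ from (1), choose the unit vector $v_0\in X_i$ with $A_iv_0=q/\|q\|_i$. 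Then, for $0<s\le1$,
\[1=\|AP_i^*v_0+sAP_j^*w\|_\pi\ \ge\ \|A_iv_0+sq\|_i=\Bigl\|\tfrac{q}{\|q\|_i}+sq\Bigr\|_i=1+s\|q\|_i\ >\ 1,\]
a contradiction. Hence $P_iAP_j^*=0$ for all $i\ne j$, and the proof is completed as indicated in the first paragraph.

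I expect the subtle point to be locating precisely where the hypothesis $A(0)=I_X$ is used. It is used in step (1): being an isometry of the \emph{product} only gives $\|A_iv\|_i\le\|v\|_i$ for free, and it is the nearness of $A$ to $I_X$---via the stability of the colour of $P_i^*v$---that promotes $A_i$ to a genuine isometry, thereby furnishing the surjectivity on which the choice of $v_0$ in (2) depends. The hypothesis cannot be dropped: for $X=\bR\times\bR$ with the maximum norm, the transposition $(a,b)\mapsto(b,a)$ is an isometry of $X$ lying in no $\bigoplus_i\Isom(X_i)$. The remaining manipulations are routine bookkeeping with the maps $P_j$ and $P_j^*$.
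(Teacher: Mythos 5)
Your proof is correct and follows essentially the same route as the paper's: both decompose $A$ into blocks $P_iAP_j^*$, first show the diagonal blocks are isometries and then kill the off-diagonal blocks by exhibiting a vector on the unit sphere whose image would have norm $1+\|A_{ij}y\|>1$ (your choice $v_0=A_i^{-1}(q/\|q\|_i)$ is exactly the paper's $x=\|A_{12}y\|_1^{-1}A_{11}^{-1}A_{12}y$). The only difference is cosmetic: for the diagonal step the paper argues by contradiction along a sequence $t_n\to0$ using $\|A_{21}(t_n)\|\to0$, whereas you give a direct quantitative colour-stability estimate with the explicit constant $\tfrac13$, which packages the result as a static statement about the $\tfrac13$-neighbourhood of $I_X$ in $\Isom(X)$.
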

\begin{proof}
  To simplify notation, we consider the case $n=2$; the general case
  is similar. Write \[A(t)=
  \begin{bmatrix}
    A_{11}(t)&A_{12}(t)\\A_{21}(t)&A_{22}(t)
  \end{bmatrix}\] where each $A_{ij}(t)$ is a linear map from~$X_j$
  to~$X_i$. It is then easy to see that $\|A_{ij}(t)\|\leq \|A(t)\|=1$
  for every~$i,j$ and~$t$. We first claim that $A_{jj}(t)\in
  \Isom(X_j)$ for $j=1,2$ and $|t|$ sufficiently small. If not, taking
  $j=1$ without loss of generality, there exist $t_n\to 0$ and
  $x_n\in X_1$ with $\|x_n\|_1=1$ so that $\|A_{11}(t_n)x_n\|_1<1$ for
  every~$n\in \bN$. Now
  \[ 1=\left\|A(t_n)
    \begin{bmatrix}
      x_n\\0
    \end{bmatrix}\right\|_\pi = \left\|
    \begin{bmatrix}
      A_{11}(t_n)x_n\\A_{21}(t_n)x_n
    \end{bmatrix}\right\|_\pi = \max\{     \|A_{11}(t_n)x_n\|_1,\|A_{21}(t_n)x_n\|_2\}\]
  so we necessarily have $\|A_{21}(t_n)x_n\|_2=1$ for every~$n$. However,
  \[ \|A_{21}(t_n)x_n\|_2 \leq \|A_{21}(t_n)\| \to 0\text{ as~$n\to \infty$},\]
  a contradiction which establishes the claim.
  
  Hence there exists~$\delta>0$ so that $A_{jj}(t)\in \Isom(X_j)$ for
  $j=1,2$ and $|t|\le\delta$. We now claim that $A_{ij}(t)=0$ for
  $|t|\le\delta$ and $i\ne j$. We show this for $(i,j)=(1,2)$, and the
  other case follows by symmetry. Suppose instead that this claim
  fails at some~$t\in [-\delta,\delta]$, and write $A=A(t)$ and
  $A_{ij}=A_{ij}(t)$. Since $A_{12}\ne 0$, we can find a unit
  vector~$y\in X_2$ with $A_{12}y\ne 0$. Since~$A_{11}$ is an
  isometry, it is invertible; let $x=\|A_{12}y\|_1^{-1}A_{11}^{-1}A_{12}y$. Since~$A_{11}$ is an isometry, we have $\|x\|_1=1$, so 
  \begin{align*} 
    \left\|A
      \begin{bmatrix}
        x\\y
      \end{bmatrix}\right\|_\pi&=
    \left\|
      \begin{bmatrix}
        x\\y
      \end{bmatrix}\right\|_\pi = \max\{\|x\|_1,\|y\|_2\}=1.
      \end{align*}
      On the other hand,
  \begin{align*} 
    \left\|A
      \begin{bmatrix}
        x\\y
      \end{bmatrix}\right\|_\pi&=
    \left\|\begin{bmatrix}
        (\|A_{12}y\|_1^{-1}+1)A_{12}y\\{}*
      \end{bmatrix}\right\|_{\pi}\\&\geq \|    (\|A_{12}y\|_1^{-1}+1)A_{12}y\|_1 
    \\&= 1+\|A_{12}y\|_1 >1\end{align*} where $*$ denotes an
  unimportant matrix entry. This contradiction shows that
  $A_{12}=0$. Hence $A(t)=A_{11}(t)\oplus A_{22}(t)$ for $|t|\leq\delta$, as desired.
\end{proof}

\begin{theorem}\label{thm:productflex}
  $\T(X,\|\cdot\|_\pi)=\bigoplus_{i=1}^n \T(X_i,\|\cdot\|_i)$.
\end{theorem}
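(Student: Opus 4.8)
The plan is to prove the two inclusions separately, with the easy direction coming directly from Lemma~\ref{lem:productrigidmotion} and the harder direction relying on Proposition~\ref{prop:isomproduct}.

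For the inclusion $\bigoplus_{i=1}^n \T(X_i,\|\cdot\|_i)\subseteq\T(X,\|\cdot\|_\pi)$: given $\eta_i\in\T(X_i,\|\cdot\|_i)$ for $i=1,\dots,n$, choose $\alpha_i\in\R(X_i,\|\cdot\|_i)$ inducing $\eta_i$. By Lemma~\ref{lem:productrigidmotion}, $\alpha:=\bigoplus_{i=1}^n\alpha_i$ lies in $\R(X,\|\cdot\|_\pi)$ and induces $\bigoplus_{i=1}^n\eta_i$, so $\bigoplus_{i=1}^n\eta_i\in\T(X,\|\cdot\|_\pi)$.

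For the reverse inclusion, I would take $\eta\in\T(X,\|\cdot\|_\pi)$, induced by some $\alpha\in\R(X,\|\cdot\|_\pi)$, and apply Lemma~\ref{lem:rigidmotion} to write $\alpha_x(t)=A_t(x)+c(t)$ with each $A_t$ a real-linear isometry of $X$, with $A_0=I_X$ and $c(0)=0$, and with $t\mapsto A_t(x)$ and $t\mapsto c(t)$ continuous on $[-1,1]$ and differentiable at $0$. Since $X$ is finite-dimensional, pointwise continuity of $t\mapsto A_t$ (checked on a basis) upgrades to norm-continuity of $t\mapsto A_t$ as a map into $\Isom(X)$, so Proposition~\ref{prop:isomproduct} supplies $\delta>0$ with $A_t=\bigoplus_{i=1}^nA_t^{(i)}$ for $|t|\le\delta$, where $A_t^{(i)}:=P_iA_tP_i^*\in\Isom(X_i)$. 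Writing $c^{(i)}(t):=P_i(c(t))$, we then have $\alpha_x(t)=\sum_{i=1}^n P_i^*\bigl(A_t^{(i)}(P_i(x))+c^{(i)}(t)\bigr)$ for $|t|\le\delta$, and differentiating at $t=0$ (each summand is differentiable there, being assembled from $t\mapsto A_t(\cdot)$, $t\mapsto c(t)$ and fixed linear maps) gives $\eta=\bigoplus_{i=1}^n\eta_i$ with $\eta_i:=P_i\circ\eta\circ P_i^*$. It then remains to check $\eta_i\in\T(X_i,\|\cdot\|_i)$: I would verify that the paths $\beta^{(i)}_y(s):=A_{\delta s}^{(i)}(y)+c^{(i)}(\delta s)$, for $y\in X_i$ and $s\in[-1,1]$, form a rigid motion of $(X_i,\|\cdot\|_i)$ — they fix every point at $s=0$, are continuous on $[-1,1]$ and differentiable at $0$, and satisfy $\|\beta^{(i)}_y(s)-\beta^{(i)}_z(s)\|_i=\|A_{\delta s}^{(i)}(y-z)\|_i=\|y-z\|_i$ because $A_{\delta s}^{(i)}$ is an isometry for $|s|\le1$ — and that this rigid motion induces $\delta\,\eta_i$, so $\eta_i\in\T(X_i,\|\cdot\|_i)$ since $\T(X_i,\|\cdot\|_i)$ is a linear subspace. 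Hence $\eta\in\bigoplus_{i=1}^n\T(X_i,\|\cdot\|_i)$.

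The main obstacle I anticipate is purely technical: ensuring that $t\mapsto A_t$ is continuous as a map into $\Isom(X)$ (so that Proposition~\ref{prop:isomproduct} genuinely applies), and checking that the rescaled, truncated paths $\beta^{(i)}$ really do satisfy all three defining conditions of a rigid motion on the full interval $[-1,1]$; everything else is routine bookkeeping with the maps $P_i$ and $P_i^*$.
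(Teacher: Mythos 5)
Your proof is correct and follows essentially the same route as the paper: the easy inclusion comes from Lemma~\ref{lem:productrigidmotion}, and the reverse inclusion is obtained by block-diagonalising $A_t$ near $t=0$ via Proposition~\ref{prop:isomproduct} and assembling rigid motions of the factor spaces. The only cosmetic difference is that you rescale time by $\delta$ and then divide out that factor using linearity of $\T(X_i,\|\cdot\|_i)$, whereas the paper instead truncates the time parameter so that it stays in $[-\delta,\delta]$.
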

\begin{proof}
  The inclusion ``$\supseteq$'' follows from
  Lemma~\ref{lem:productrigidmotion}. For the reverse inclusion, let
  $\eta\in \T(X,\|\cdot\|_\pi)$ and choose
  $\alpha\in \R(X,\|\cdot\|_\pi)$ which induces~$\eta$.  By the
  Mazur-Ulam theorem, we may write \[\alpha(x,t)=A(t)x+c(t)\] where
  $c\colon [-1,1]\to X$ and $A\colon [-1,1]\to \Isom(X)$ are
  continuous and differentiable at~$t=0$ with $c(0)=0$ and
  $A(0)=I_X$. Choose~$\delta>0$ by Proposition~\ref{prop:isomproduct},
  so that $A(t)=\bigoplus_{i=1}^n A_i(t)$ for~$t\in [-\delta,\delta]$
  where $A_i\colon [-\delta,\delta]\to \Isom(X_i)$ are continuous and
  differentiable at~$t=0$.  Consider the map
   \[\alpha_i\colon X_i\times[-1,1]\to X_i,\quad \alpha_i(y,t)= P_i\alpha(P_i^*y,\tau)\quad\text{where $\tau=\min\{\delta,t\}$}.\]
   This map is plainly continuous in~$t$ and differentiable at~$t=0$,
   and it is isometric in~$y$ since
  \begin{align*}
    \|\alpha_i(y,t)-\alpha_i(z,t)\|_i&=\|A_i(\tau)y+P_ic(\tau)-(A_i(\tau)z+P_ic(\tau))\|_i\\
    &=\|A_i(y-z)\|_i=\|y-z\|_i.
  \end{align*}
  Hence $\alpha_i\in\R(X_i,\|\cdot\|_i)$.  Moreover, for $|t|\leq \delta$ and~$x\in X$,
  we have 
  \begin{align*}
    \sum_{j=1}^n P_j^* \alpha_j(P_j(x),t) &= \sum_{j=1}^n P_j^*P_j \alpha(P_j^*P_j x,t) = \sum_{j=1}^n P_j^* (A_j(t)P_jx+P_jc(t))\\&=A(t)x+c(t)=\alpha_x(t).
  \end{align*}
  This shows that on a neighbourhood of~$t=0$, the rigid motion
  $\alpha$ coincides with~$\bigoplus_{j=1}^n \alpha_j$.  Hence
  $\alpha$ and~$\bigoplus_{j=1}^n \alpha_j$~induce the same
  infinitesimal rigid motion, namely~$\eta$. Hence
  $\eta\in\bigoplus_{j=1}^n
  \T(X_j,\|\cdot\|_j)$ by Lemma~\ref{lem:productrigidmotion}.
\end{proof}

As a corollary we obtain the following characterisation for full sets in product spaces.

\begin{corollary}
\label{cor:full_product}
A set $S\subset X$ is full in $(X,\|\cdot\|_\pi)$ if and only if $P_j(S)$ is full in $(X_j,\|\cdot\|_j)$ for each $j=1,\ldots,n$.
\end{corollary}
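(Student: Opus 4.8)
The plan is to reduce everything to Theorem~\ref{thm:productflex}, which identifies $\T(X,\|\cdot\|_\pi)$ with $\bigoplus_{j=1}^n \T(X_j,\|\cdot\|_j)$ via the correspondence $\eta\leftrightarrow(\eta_1,\dots,\eta_n)$, where $\eta_j=P_j\circ\eta\circ P_j^*$ and, conversely, $\eta(x)=\sum_{j=1}^n P_j^*\eta_j(P_j(x))$. Recall that $S$ is full in $(X,\|\cdot\|_\pi)$ precisely when the restriction map $\rho_S\colon\T(X,\|\cdot\|_\pi)\to X^S$ is injective, and likewise $P_j(S)$ is full in $(X_j,\|\cdot\|_j)$ precisely when $\rho_{P_j(S)}\colon\T(X_j,\|\cdot\|_j)\to X_j^{P_j(S)}$ is injective.

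The first step I would carry out is the elementary observation that, since $X=\bigoplus_{j=1}^n X_j$ with the maps $P_j^*$ being the coordinate embeddings (so $P_iP_j^*=0$ for $i\ne j$ and $P_jP_j^*=\id_{X_j}$, as recorded before Lemma~\ref{Product_SuppFunctional}), a sum $\sum_{j=1}^n P_j^* y_j$ with $y_j\in X_j$ vanishes if and only if every $y_j$ vanishes. Applying this with $y_j=\eta_j(P_j(s))$ for each $s\in S$ yields, for any $\eta=\bigoplus_{j=1}^n\eta_j\in\T(X,\|\cdot\|_\pi)$, the equivalence
\[\rho_S(\eta)=0 \iff \eta_j(P_j(s))=0 \text{ for all } j \text{ and all } s\in S \iff \rho_{P_j(S)}(\eta_j)=0 \text{ for every } j.\]
Both implications of the corollary then follow at once. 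If each $P_j(S)$ is full and $\rho_S(\eta)=0$, then each $\rho_{P_j(S)}(\eta_j)=0$ forces $\eta_j=0$, hence $\eta=0$; so $S$ is full. Conversely, if $S$ is full, fix $k\in\{1,\dots,n\}$ and suppose $\eta_k\in\T(X_k,\|\cdot\|_k)$ satisfies $\rho_{P_k(S)}(\eta_k)=0$; setting $\eta_j=0$ for $j\ne k$ gives, by Theorem~\ref{thm:productflex}, an element $\eta=\bigoplus_{j=1}^n\eta_j\in\T(X,\|\cdot\|_\pi)$ with $\rho_S(\eta)=0$, whence $\eta=0$ and therefore $\eta_k=P_k\circ\eta\circ P_k^*=0$; so $\rho_{P_k(S)}$ is injective and $P_k(S)$ is full.

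There is no substantive obstacle here once Theorem~\ref{thm:productflex} is available: the only point requiring a moment's care is the first step, namely that the decomposition $\eta(s)=\sum_{j=1}^n P_j^*\eta_j(P_j(s))$ is a genuine internal direct-sum decomposition and hence vanishes coordinatewise, together with the (routine) verification that the ``glued'' vector field $\bigoplus_j\eta_j$ with $\eta_j=0$ for $j\ne k$ genuinely lies in $\T(X,\|\cdot\|_\pi)$, which is exactly the content of the ``$\supseteq$'' inclusion in Theorem~\ref{thm:productflex}.
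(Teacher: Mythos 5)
Your proof is correct and is exactly the argument the paper intends: the paper states this as an immediate corollary of Theorem~\ref{ProjThm}'s neighbour, Theorem~\ref{thm:productflex}, without writing out the details, and your coordinatewise reduction (using $P_iP_j^*=\delta_{ij}\id$ to split $\rho_S(\eta)=0$ into the conditions $\rho_{P_j(S)}(\eta_j)=0$, plus the ``glue in zeros'' construction for the converse) is precisely the omitted verification.
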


\subsection{Trivial infinitesimal flexes of frameworks}
Let $(G,p)$ be a %
bar-joint framework in $(X,\|\cdot\|_\pi)$.  
 For $j=1,\dots,n$ and~$z\in \T(G,p)$ (so that $z\colon V\to X$), we define (as before)
 $z_j=P_j\circ z\colon V\to X_j$.%

\begin{proposition}
\label{prop:ProductTriv}
Let $(G,p)$ be a %
bar-joint framework in $(X,\|\cdot\|_\pi)$.
\begin{enumerate}[(i)]
\item
If $z\in \T(G,p)$, then $ z_j\in\T(G_j,p_j)$ for each $j=1,\ldots,n$.
\item
The map
\[\tilde \Phi_{(G,p)}:\T(G,p)\to\bigoplus_{j=1}^n\T(G_j,p_j),
\,\,\,\,\,
z\mapsto ( z_1,\ldots, z_n)\]
is a linear isomorphism.
\item $\dim \T(G,p)=\sum_{j=1}^n \dim \T(G_j,p_j)$.
\end{enumerate}
\end{proposition}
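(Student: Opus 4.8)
The plan is to leverage the structural results already established: Theorem~\ref{thm:productflex}, which identifies $\T(X,\|\cdot\|_\pi)$ with $\bigoplus_{j=1}^n\T(X_j,\|\cdot\|_j)$, together with Lemma~\ref{lem:productrigidmotion}, which tells us precisely how an element $\eta=\bigoplus_j\eta_j$ acts, namely $\eta=\sum_j P_j^*\circ\eta_j\circ P_j$, and that $\eta_j=P_j\circ\eta\circ P_j^*$. For item~(i), suppose $z\in\T(G,p)$, so $z=\eta\circ p$ for some $\eta\in\T(X,\|\cdot\|_\pi)$. By Theorem~\ref{thm:productflex} we may write $\eta=\bigoplus_j\eta_j$ with each $\eta_j\in\T(X_j,\|\cdot\|_j)$. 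Then for each vertex $v$ and each $j$, we compute $z_j(v)=P_j(z(v))=P_j(\eta(p_v))=P_j\big(\sum_k P_k^*\eta_k(P_k p_v)\big)=\eta_j(P_j(p_v))=\eta_j(p_j(v))$, using $P_jP_k^*=\delta_{jk}I$. Hence $z_j=\eta_j\circ p_j\in\T(G_j,p_j)$, as required.

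For item~(ii), the map $\tilde\Phi_{(G,p)}$ is visibly linear, and it is well-defined into $\bigoplus_j\T(G_j,p_j)$ precisely by~(i). To see it is a bijection, I would first observe that $\tilde\Phi_{(G,p)}$ is simply the restriction to $\T(G,p)\subseteq X^V$ of the linear isomorphism $\Phi_V\colon X^V\to\bigoplus_j X_j^V$ from Section~\ref{sect:FrameworkColours}; since $\Phi_V$ is injective, so is $\tilde\Phi_{(G,p)}$. For surjectivity, take $(w_1,\dots,w_n)$ with each $w_j\in\T(G_j,p_j)$, say $w_j=\eta_j\circ p_j$ with $\eta_j\in\T(X_j,\|\cdot\|_j)$. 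Set $\eta=\bigoplus_j\eta_j\in\T(X,\|\cdot\|_\pi)$ (this lies in $\T(X,\|\cdot\|_\pi)$ by Lemma~\ref{lem:productrigidmotion}), and put $z=\eta\circ p\in\T(G,p)$. The computation in~(i) shows $z_j=\eta_j\circ p_j=w_j$ for each $j$, so $\tilde\Phi_{(G,p)}(z)=(w_1,\dots,w_n)$. Thus $\tilde\Phi_{(G,p)}$ is a linear isomorphism. Item~(iii) is then immediate from~(ii) by comparing dimensions.

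I do not anticipate a genuine obstacle here: all the work has been done in Theorem~\ref{thm:productflex} and Lemma~\ref{lem:productrigidmotion}, and the remaining content is the bookkeeping identity $z_j=\eta_j\circ p_j$, which follows from the orthogonality relations $P_jP_k^*=\delta_{jk}I$ among the projections and embeddings. The only point requiring a little care is to make sure that in the surjectivity argument the $\eta_j$ chosen to realise the $w_j$ really do assemble, via $\bigoplus_j$, to an element of $\T(X,\|\cdot\|_\pi)$ rather than merely a formal direct sum of vector fields; this is exactly guaranteed by Lemma~\ref{lem:productrigidmotion}(ii), so there is nothing further to check. The proof is therefore a short assembly of the preceding lemmas.
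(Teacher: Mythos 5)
Your proof is correct and follows essentially the same route as the paper: part (i) by decomposing $\eta=\bigoplus_j\eta_j$ via Theorem~\ref{thm:productflex} and computing $z_j=\eta_j\circ p_j$, injectivity by viewing $\tilde\Phi_{(G,p)}$ as a restriction of the isomorphism $\Phi_V$, and surjectivity by assembling the $\eta_j$ into $\bigoplus_j\eta_j\in\T(X,\|\cdot\|_\pi)$. The only cosmetic difference is that you cite Lemma~\ref{lem:productrigidmotion} where the paper cites Theorem~\ref{thm:productflex} for the containment $\bigoplus_j\eta_j\in\T(X,\|\cdot\|_\pi)$; both are valid.
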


\begin{proof}
$(i)$
Since~$z\in \T(G,p)$, there exists %
an infinitesimal rigid motion $\eta\in \T(X,\|\cdot\|_\pi)$ with $z(v)=\eta(p_v)$ for each $v\in V$. By Theorem~\ref{thm:productflex}, $\eta=\bigoplus_{i=1}^n\eta_i$ where $\eta_i\in \T(X_i,\|\cdot\|_i)$ for $i=1,\dots,n$, so
\[ z_j(v)=P_j\left(\bigoplus_{i=1}^n\eta_i\right)(p_v)=\eta_j(P_j(p_v))=\eta_j(p_j(v)).\]
Thus $z_j$ is the trivial infinitesimal flex of $(G_j,p_j)$ induced by the infinitesimal rigid motion $\eta_j$, so $z_j\in \T(G_j,p_j)$.

$(ii)$   By $(i)$ this map is well defined, and it is easily seen to be linear. Since $z=\sum_{j=1}^n P_j^*\circ P_j\circ z=\sum_{j=1}^n P_j^*\circ z_j$ for any~$z\in \T(G,p)$, we see immediately that~$\tilde \Phi_{(G,p)}$ is injective.
  For surjectivity, observe that if $w=(w_1,\ldots,w_n)\in \oplus_{j=1}^n\T(G_j,p_j)$, then $w_j=\eta_j\circ p_j$ for some $\eta_j\in \T(X_j,\|\cdot\|_j)$, hence $w_j=\eta_j\circ P_j\circ p$. We have $\eta:=\bigoplus_{j=1}^n \eta_j\in \T(X,\|\cdot\|_\pi)$ by Theorem~\ref{thm:productflex}, so $\eta\circ p\in \T(G,p)$. Let $j\in\{1,\dots,n\}$. We have $P_j\circ \eta=\eta_j\circ P_j$, so the $j$th component of $\tilde \Phi_{(G,p)}(\eta\circ p)$ is 
  $P_j\circ \eta\circ p = \eta_j\circ P_j \circ p=w_j$,
  hence $\tilde\Phi_{(G,p)}(\eta\circ p)=w$. Assertion~$(iii)$ follows immediately.
\end{proof}

\subsection{A characterisation of infinitesimal rigidity}
We can now characterise infinitesimal rigidity for well-positioned bar-joint frameworks in terms of their projected monochrome subframeworks. 

\begin{theorem}
\label{ProjThm}
If $(G,p)$ is a well-positioned %
bar-joint framework in $(X,\|\cdot\|_\pi)$, then
the following statements are equivalent.
\begin{enumerate}[(i)]
\item
$(G,p)$ is (minimally) infinitesimally rigid in $(X,\|\cdot\|_\pi)$. 
\item
The projected monochrome subframeworks $(G_j,p_j)$ are (minimally) infinitesimally rigid in $(X_j,\|\cdot\|_j)$ for each $j=1,2,\ldots,n$.
\end{enumerate}
\end{theorem}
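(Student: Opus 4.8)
The plan is to derive both equivalences (with and without the parenthetical ``minimally'') from two structural facts already established. By Corollary~\ref{cor:ProductFlex} (and the observation in its proof that $\Phi_{(G,p)}$ is the restriction of $\Phi_V$) the injective linear map $\Phi_V\colon X^V\to\bigoplus_{j=1}^n X_j^V$ restricts to a linear isomorphism $\F(G,p)\to\bigoplus_j\F(G_j,p_j)$, and by Proposition~\ref{prop:ProductTriv} the \emph{same} map $\Phi_V$ restricts to a linear isomorphism $\T(G,p)\to\bigoplus_j\T(G_j,p_j)$. Since $\T(G,p)\subseteq\F(G,p)$ and $\T(G_j,p_j)\subseteq\F(G_j,p_j)$ for each $j$, I would argue the non-minimal statement thus: $(G,p)$ is infinitesimally rigid iff $\F(G,p)=\T(G,p)$ iff (by injectivity of $\Phi_V$) $\Phi_V(\F(G,p))=\Phi_V(\T(G,p))$ iff $\bigoplus_j\F(G_j,p_j)=\bigoplus_j\T(G_j,p_j)$ iff $\F(G_j,p_j)=\T(G_j,p_j)$ for every $j$ iff each $(G_j,p_j)$ is infinitesimally rigid. (Equivalently one may work dimension-theoretically via $\dim\F(G,p)=\sum_j\dim\F(G_j,p_j)$, $\dim\T(G,p)=\sum_j\dim\T(G_j,p_j)$ and $\dim\F(G_j,p_j)\ge\dim\T(G_j,p_j)$.)

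For the minimal statement I would use that, since $(G,p)$ is well-positioned, Proposition~\ref{prop:ProductWP}$(i)$ partitions the edge set as $E=E_1\sqcup\dots\sqcup E_n$. Fix an edge $e$, with (unique) framework colour $j$, so $e\in E_j$. Deleting $e$ yields a framework $(G-e,p)$ which is still well-positioned (smoothness concerns only the surviving edges), and since framework colour depends only on $p_v-p_w$ its monochrome subframeworks are $(G_i,p_i)$ for $i\ne j$ together with $(G_j-e,p_j)$ in colour $j$. Applying the non-minimal equivalence to $(G-e,p)$ then shows that $(G-e,p)$ is infinitesimally flexible if and only if at least one of $(G_i,p_i)$ (for $i\ne j$) or $(G_j-e,p_j)$ is infinitesimally flexible.

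Both implications of the minimal statement now follow. If $(G,p)$ is minimally infinitesimally rigid then it is infinitesimally rigid, so every $(G_i,p_i)$ is infinitesimally rigid by the first part; were some $(G_j-e,p_j)$ with $e\in E_j$ still infinitesimally rigid, the criterion above would force $(G-e,p)$ to be infinitesimally rigid, contradicting minimality, so each $(G_j,p_j)$ is minimally infinitesimally rigid. Conversely, if each $(G_j,p_j)$ is minimally infinitesimally rigid then each is infinitesimally rigid, so $(G,p)$ is infinitesimally rigid; and for any edge $e$, say $e\in E_j$, minimality of $(G_j,p_j)$ makes $(G_j-e,p_j)$ infinitesimally flexible, so the criterion makes $(G-e,p)$ infinitesimally flexible. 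Hence $(G,p)$ is minimally infinitesimally rigid.

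The only step needing care is the bookkeeping in the second paragraph: checking that deleting an edge preserves well-positionedness (so that the non-minimal equivalence genuinely applies to $(G-e,p)$) and alters only the monochrome subframework carrying that edge's colour. This is routine, and no input beyond Corollary~\ref{cor:ProductFlex}, Proposition~\ref{prop:ProductTriv} and Proposition~\ref{prop:ProductWP} is required.
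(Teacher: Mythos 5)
Your proof is correct and, for the plain (non-minimal) equivalence, follows essentially the same route as the paper: both arguments rest on Corollary~\ref{cor:ProductFlex} and Proposition~\ref{prop:ProductTriv}, comparing $\F(G,p)$ with $\T(G,p)$ via the decomposition into the factors $\F(G_j,p_j)\supseteq\T(G_j,p_j)$ (the paper phrases this with dimensions, you phrase it with the common restriction of $\Phi_V$; these are interchangeable). The one genuine difference is that the paper's proof is silent on the parenthetical ``minimally'' claim, whereas your second and third paragraphs supply a correct and complete justification — the key observations being that deleting an edge preserves well-positionedness and changes only the monochrome subframework of that edge's unique colour, so the non-minimal equivalence can be applied to $(G-e,p)$ — which is a worthwhile addition rather than a deviation.
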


\begin{proof}
The statement follows from Corollary~\ref{cor:ProductFlex} and Proposition~\ref{prop:ProductTriv}. Indeed, if $(G,p)$ is infinitesimally rigid then
\[\sum_{j=1}^n \dim \F(G_j,p_j) = 
\dim \F(G,p)= \dim \T(G,p)=\sum_{j=1}^n \dim \T(G_j,p_j).\]
and, since $\T(G_j,p_j)$ is a subspace of $\F(G_j,p_j)$ for each $j$, condition $(ii)$ follows. Conversely, if $(ii)$ holds then 
\[\dim \F(G,p)=\sum_{j=1}^n \dim \F(G_j,p_j) = \sum_{j=1}^n \dim \T(G_j,p_j)= \dim \T(G,p).\]
and so condition $(i)$ follows.
\end{proof}

The following result was obtained by different methods in \cite{kit-pow}. 

\begin{corollary}
Let $(G,p)$ be a well-positioned  framework in $(\bR^d,\|\cdot\|_\infty)$.
The following statements are equivalent.
\begin{enumerate}[(i)]
\item $(G,p)$ is minimally infinitesimally rigid in $(\bR^d,\|\cdot\|_\infty)$.
\item The monochrome subgraphs $G_1,\ldots,G_d$ are spanning trees in $G$.
\end{enumerate}
\end{corollary}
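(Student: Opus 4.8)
The plan is to recognise $(\bR^d,\|\cdot\|_\infty)$ as the product space $X_1\times\cdots\times X_d$ in which every factor $(X_j,\|\cdot\|_j)$ equals $(\bR,|\cdot|)$ and the product norm $\|\cdot\|_\pi$ is $\|\cdot\|_\infty$, and then to invoke Theorem~\ref{ProjThm}. Under this identification the monochrome subgraphs $G_1,\dots,G_d$ of $(G,p)$ coincide with the underlying graphs of the projected monochrome subframeworks $(G_j,p_j)$; moreover Proposition~\ref{prop:ProductWP} guarantees both that each $(G_j,p_j)$ is well-positioned in $(\bR,|\cdot|)$ and that $E_1,\dots,E_d$ partition $E$. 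Theorem~\ref{ProjThm}, read with ``minimally'' throughout, then reduces the corollary to the single assertion that a well-positioned framework $(H,q)$ in $(\bR,|\cdot|)$ on the vertex set $V$ is minimally infinitesimally rigid if and only if $H$ is a spanning tree of $G$.

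To establish that assertion I would first pin down the two relevant subspaces of $\bR^V$. The real-linear isometries of $\bR$ are exactly multiplication by $\pm1$, so in the Mazur--Ulam decomposition of Lemma~\ref{lem:rigidmotion} each $A_t$ lies in the discrete set $\{\id,-\id\}$; since $t\mapsto A_t(x)$ is continuous with $A_0=\id$, we get $A_t=\id$ for all $t$ near $0$. Hence $\T(\bR,|\cdot|)$ consists precisely of the constant vector fields, and $\T(H,q)$ is the one-dimensional space of constant maps $V\to\bR$. On the flex side, for each edge $vw\in E(H)$ the absolute-value norm is smooth at $q_v-q_w\ne0$ with support functional $x\mapsto\sgn(q_v-q_w)\,x$, so by Lemma~\ref{lem:flexcondition} we have $\F(H,q)=\ker df_H(q)=\{z\in\bR^V:z_v=z_w\text{ for every }vw\in E(H)\}$, which is the space of functions that are constant on each connected component of the graph $H$ (a graph on the full vertex set $V$), of dimension equal to the number of these components.

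The combinatorial translation is then immediate. The framework $(H,q)$ is infinitesimally rigid, i.e.\ $\F(H,q)=\T(H,q)$, precisely when $H$ has a single connected component on $V$, that is, when $H$ is connected and spanning. Deleting an edge $e$ from $H$ leaves $(H-e,q)$ infinitesimally flexible exactly when $H-e$ is disconnected, so $(H,q)$ is minimally infinitesimally rigid if and only if $H$ is connected and spanning and every edge of $H$ is a bridge; equivalently, if and only if $H$ is a spanning tree. Applying this to each projected monochrome subframework $(G_j,p_j)$ and feeding the result back through Theorem~\ref{ProjThm} gives the stated equivalence of (i) and (ii).

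I do not anticipate a genuine obstacle; the content lies in assembling the ingredients correctly. The step demanding the most care is the explicit computation of $\T(\bR,|\cdot|)$---showing that the only infinitesimal rigid motions of the line are the translations---together with the verification that the product-space apparatus of Section~\ref{ProductNorms} legitimately applies here, namely that $\|\cdot\|_\infty$ really is the product norm of $d$ copies of $(\bR,|\cdot|)$ and that Proposition~\ref{prop:ProductWP} identifies the graphs $G_1,\dots,G_d$ with the monochrome subgraphs of the general theory. Everything downstream of that is a direct unwinding of definitions.
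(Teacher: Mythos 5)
Your proposal is correct and follows essentially the same route as the paper: identify $(\bR^d,\|\cdot\|_\infty)$ with the product of $d$ copies of $(\bR,|\cdot|)$, apply Theorem~\ref{ProjThm}, and reduce to the fact that a framework in $\bR$ is (minimally) infinitesimally rigid if and only if its graph is connected (respectively, a spanning tree). The paper simply states this one-dimensional fact as an observation, whereas you verify it by computing $\T(\bR,|\cdot|)$ and $\F(H,q)$ explicitly; that verification is accurate.
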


\begin{proof}
By Theorem~\ref{ProjThm}, $(G,p)$ is infinitesimally rigid if and only if each $(G_j,p_j)$ is infinitesimally rigid. The result now follows from the observation that a framework in $\bR$ is (minimally) infinitesimally rigid if and only if the underlying graph is connected (respectively, a tree).
\end{proof}

\section{Application to \texorpdfstring{$(\H_2(\bF),\|\cdot\|_{c_1})$}{H2(F),||c1}}
\label{sect:applications}
In this section we apply Theorem \ref{ProjThm} to characterise infinitesimal rigidity in  the matrix space $\H_2(\bF)$ endowed with the trace norm, for both $\bF=\bR$ and $\bF=\bC$. These normed spaces can be identified, under a suitable isometric isomorphism, with a product norm on $\bR^3$ or $\bR^4$ respectively. We also show how to construct an infinitesimally rigid placement of the complete graph $K_m$ in $(\H_2(\bF),\|\cdot\|_{c_1})$ for sufficiently large values of  $m$.

\subsection{Symmetric matrices}
\newcommand{\cyl}{\text{\upshape{cyl}}}
Denote by $\|\cdot\|_{\cyl}$ the product norm on $\bR^3=X_1\times X_2$, where $X_1=\bR^2$ and $X_2=\bR$, given by
\[\|(x,y,z)\|_{\cyl} = \max \{ \sqrt{x^2+y^2},|z|\}.\]
Note that the closed unit ball in $(\bR^3,\|\cdot\|_{\cyl})$ is a cylinder $D\times [-1,1]$ where $D$ is the closed unit disk in the Euclidean plane. 
We refer to a normed linear space which is isometrically isomorphic to $(\bR^3,\|\cdot\|_{\cyl})$ as a \emph{cylindrical normed space}.

\begin{lemma}
\label{CylLemma}
\begin{enumerate}[(i)]
\item $(\H_2(\bR),\|\cdot\|_{c_1})$ is a cylindrical normed space. 

\item Every cylindrical normed space $(X,\|\cdot\|)$ 
satisfies $\dim \T(X, \|\cdot\|) =4$.

\item In a cylindrical normed space, every bar-joint framework on a graph containing at least one edge is full.

\end{enumerate}
\end{lemma}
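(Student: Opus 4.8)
The plan is to prove the three parts in sequence, exploiting Part~(i) and the product-norm machinery of Section~\ref{ProductNorms} for Parts~(ii) and~(iii).

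\emph{Part (i).} The approach is to diagonalise. For $M=\mat a&b\\b&c\emat\in\H_2(\bR)$ the eigenvalues are $w\pm r$, where $w=\tfrac12(a+c)$ and $r=\sqrt{\tfrac14(a-c)^2+b^2}\ge0$, so the two singular values are $|w+r|$ and $|w-r|$, whence $\|M\|_{c_1}=|w+r|+|w-r|=2\max\{|w|,r\}$. I would then verify that the linear bijection
\[\Theta\colon\H_2(\bR)\to\bR^3,\qquad \mat a&b\\b&c\emat\mapsto(a-c,\ 2b,\ a+c),\]
satisfies $\|\Theta(M)\|_{\cyl}=\max\bigl\{\sqrt{(a-c)^2+4b^2},\ |a+c|\bigr\}=2\max\{r,|w|\}=\|M\|_{c_1}$, so $\Theta$ is an isometric isomorphism onto $(\bR^3,\|\cdot\|_{\cyl})$; thus $(\H_2(\bR),\|\cdot\|_{c_1})$ is a cylindrical normed space.

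\emph{Part (ii).} The key observation is that an isometric isomorphism $U\colon X\to Y$ of finite-dimensional normed spaces induces a linear isomorphism $\T(X,\|\cdot\|_X)\to\T(Y,\|\cdot\|_Y)$ by $\eta\mapsto U\circ\eta\circ U^{-1}$: if $\alpha\in\R(X,\|\cdot\|_X)$ induces $\eta$, then the paths $\beta_y(t)=U(\alpha_{U^{-1}(y)}(t))$ define an element $\beta\in\R(Y,\|\cdot\|_Y)$ inducing $U\circ\eta\circ U^{-1}$, and this assignment is invertible. Hence $\dim\T$ is an isometric invariant. Since every cylindrical space is isometrically isomorphic to $(\bR^3,\|\cdot\|_{\cyl})$, which by Part~(i) is isometrically isomorphic to $(\H_2(\bR),\|\cdot\|_{c_1})$, and the latter is an admissible matrix space (Remark~\ref{rk:admissible}), Proposition~\ref{prop:dim2} with $n=2$ and $\bF=\bR$ gives $\dim\T=n^2=4$. (Alternatively one could argue directly: $(\bR^3,\|\cdot\|_{\cyl})$ is the product norm on $(\bR^2,\|\cdot\|_2)\times(\bR,|\cdot|)$, so Theorem~\ref{thm:productflex} together with the classical facts $\dim\T(\bR^2,\|\cdot\|_2)=3$ and $\dim\T(\bR,|\cdot|)=1$ yields $\dim\T=3+1=4$.)

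\emph{Part (iii).} Using $\Theta$ I would reduce to $X=(\bR^3,\|\cdot\|_{\cyl})$, viewed as the product norm on $X_1\times X_2$ with $X_1=(\bR^2,\|\cdot\|_2)$ and $X_2=(\bR,|\cdot|)$. Let $(G,p)$ be a framework in $X$ on a graph with at least one edge, and put $S=\{p_v\colon v\in V\}$. By Corollary~\ref{cor:full_product}, $S$ is full in $X$ if and only if $P_1(S)$ is full in $X_1$ and $P_2(S)$ is full in $X_2$. Since $\T(\bR,|\cdot|)$ consists of the constant maps, $\rho_{P_2(S)}$ is injective, so $P_2(S)$ is automatically full in $X_2$. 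For $X_1$, every nonzero element of $\T(\bR^2,\|\cdot\|_2)$ has the form $v\mapsto Av+b$ with $(A,b)\ne0$ and $A$ skew-symmetric, and such a map vanishes at no more than one point of $\bR^2$ (if it vanished at $v_1\ne v_2$ then $A(v_1-v_2)=0$, forcing $A=0$ and then $b=0$); hence $P_1(S)$ is full in $X_1$ as soon as it contains two distinct points. So part~(iii) comes down to exhibiting an edge $vw$ with $P_1(p_v)\ne P_1(p_w)$. This is the step I expect to be the main obstacle: an edge only gives $p_v\ne p_w$, and if $P_1(p_v)=P_1(p_w)$ then $p_v-p_w$ is a nonzero multiple of the cylinder-axis vector $(0,0,1)$ --- in the matrix picture, a nonzero scalar multiple of the identity --- so the crux is to handle (or rule out) this degenerate configuration before the reduction above closes the argument.
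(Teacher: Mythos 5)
Parts (i) and (ii) of your proposal are correct and essentially coincide with the paper's proof: the paper writes down the inverse of your map $\Theta$ (up to a permutation of coordinates) and verifies the same eigenvalue identity, and for (ii) it likewise transports $\dim\T(\H_2(\bR),\|\cdot\|_{c_1})=4$ from Proposition~\ref{prop:dim2} through the isometric isomorphism; your alternative derivation from Theorem~\ref{thm:productflex} is equally valid.

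For part (iii), the obstacle you flag at the end is not a failure of imagination on your part: the degenerate configuration cannot be ruled out, and it is in fact a counterexample to the statement as written. Place $K_2$ at $p_{v_1}=(0,0,0)$ and $p_{v_2}=(0,0,1)$ in $(\bR^3,\|\cdot\|_{\cyl})$, equivalently at $0$ and $I$ in $(\H_2(\bR),\|\cdot\|_{c_1})$. The infinitesimal rotation about the cylinder axis, $\eta(x,y,z)=(-y,x,0)$ --- in the matrix model $\eta(x)=[a,x]$ with $a=\left[\begin{smallmatrix}0&-1\\1&0\end{smallmatrix}\right]$, which annihilates $\bR I$ --- is a nonzero element of $\T(\bR^3,\|\cdot\|_{\cyl})$ vanishing at both joints, so this framework, which has an edge and is even well-positioned, is not full. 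The paper's own one-line proof makes exactly the leap you decline to make: it appeals to the fact that a planar framework with an edge is full, but the relevant planar point set is $P_1(S)$, and an edge only guarantees $p_v\ne p_w$, not $P_1(p_v)\ne P_1(p_w)$; when every edge is parallel to the axis, $P_1(S)$ collapses to a single point, which is not full in $(\bR^2,\|\cdot\|_2)$ because of rotations about that point.

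Your reduction does establish the correct statement, which is what (iii) should say: $(G,p)$ is full in a cylindrical normed space if and only if $P_1(\{p_v\colon v\in V\})$ contains two distinct points, i.e.\ the joints do not all lie on a single line parallel to the cylinder axis. This corrected version suffices for the paper's applications: in Theorem~\ref{thm:cylmain}(i), for instance, if $P_1\circ p$ were constant then $G_1$ would have no edges and $(G_1,p_1)$ could not be infinitesimally rigid in the plane when $|V|\ge2$, contradicting Theorem~\ref{ProjThm}; so fullness still holds for the frameworks considered there.
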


\begin{proof}
The map 
\[\Psi: (\bR^3,\|\cdot\|_{\cyl}) 
\to (\H_2(\bR),\|\cdot\|_{c_1}), \,\,\,\,\,\,
(x,y,z)\mapsto \frac{1}{2}\left(\begin{array}{cc}
z+y & x \\
x & z-y
\end{array}
\right)
\]
is an isometric isomorphism. Indeed, the eigenvalues of $\Psi(x,y,z)$ are $\lambda_{\pm}=\tfrac12(z\pm\sqrt{x^2+y^2})$, hence $\|\Psi(x,y,z)\|_{c_1}=|\lambda_+|+|\lambda_-|=\|(x,y,z)\|_{\cyl}$.
Statement $(ii)$ follows from the corresponding property of $(\H_2(\bR),\|\cdot\|_{c_1})$, established in Proposition~\ref{prop:dim2}.
Statement $(iii)$ follows from Corollary \ref{cor:full_product} and the easily verified fact that once we have at least one edge, every bar-joint framework in the Euclidean plane, and every bar-joint framework in $\bR$, is full.
\end{proof}

\begin{lemma}
\label{lem:cylwp}
Let $(G,p)$ be a bar-joint framework in $(\bR^3,\|\cdot\|_{\cyl})$.

\begin{enumerate}[(i)]
\item For $p_v-p_w=(x,y,z)$ where $vw\in E$, we have $1\in \kappa_p(vw)$ if and only if $x^2+y^2\geq z^2$, and $2\in \kappa_p(vw)$ if and only if $x^2+y^2\leq z^2$.

\item
$(G,p)$ is well-positioned in $(\bR^3,\|\cdot\|_{\cyl})$ if and only if $p_v-p_w$ does not lie in the cone $C=\{(x,y,z)\in\bR^3: x^2+y^2=z^2\}$ for each edge $vw\in E$. 
\end{enumerate}
\end{lemma}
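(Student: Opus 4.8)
The plan is to deduce both parts directly from the definitions together with Proposition~\ref{prop:ProductWP}. For part~(i) I would simply unwind the definition of the colour set. With $X_1=\bR^2$ carrying the Euclidean norm and $X_2=\bR$ carrying the absolute value, for $p_v-p_w=(x,y,z)$ we have $\|P_1(p_v-p_w)\|_1=\sqrt{x^2+y^2}$, $\|P_2(p_v-p_w)\|_2=|z|$, and $\|p_v-p_w\|_{\cyl}=\max\{\sqrt{x^2+y^2},|z|\}$. Hence $1\in\kappa_p(vw)$ exactly when $\sqrt{x^2+y^2}\ge|z|$, i.e.\ when $x^2+y^2\ge z^2$, and likewise $2\in\kappa_p(vw)$ exactly when $\sqrt{x^2+y^2}\le|z|$, i.e.\ when $x^2+y^2\le z^2$.

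For part~(ii) the key observation is that both factor norms are smooth at every nonzero vector: the Euclidean norm on $\bR^2$ is smooth since it is an inner product norm, and the absolute value on $\bR$ is smooth at every nonzero point. Consequently, whenever $vw\in E_j$ we have $\|p_j(v)-p_j(w)\|_j=\|p_v-p_w\|_{\cyl}\ne0$ by Proposition~\ref{prop:ProductWP}, so $(G_j,p_j)$ is automatically well-positioned in $(X_j,\|\cdot\|_j)$. Thus condition~(ii) of Proposition~\ref{prop:ProductWP} holds trivially in this setting, and $(G,p)$ is well-positioned in $(\bR^3,\|\cdot\|_{\cyl})$ if and only if condition~(i) of that proposition holds, namely that each edge of $G$ has exactly one framework colour.

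Finally I would combine this with part~(i). Since $\kappa_p(vw)$ is always nonempty, each edge $vw$ carries at least one colour; by part~(i) it carries both colours precisely when $x^2+y^2=z^2$ for $p_v-p_w=(x,y,z)$, i.e.\ precisely when $p_v-p_w\in C$. Therefore every edge has exactly one framework colour if and only if no edge difference lies in the cone $C$, which is the desired characterisation. I do not anticipate a genuine obstacle here; the only point requiring a little care is confirming that the projected monochrome subframeworks really are frameworks (their edge differences are nonzero), and this is exactly what Proposition~\ref{prop:ProductWP} supplies once an edge has been assigned a colour.
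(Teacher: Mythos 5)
Your proposal is correct and follows essentially the same route as the paper: part (i) by unwinding the definition of $\kappa_p$, and part (ii) by noting that smoothness of the Euclidean and absolute-value norms makes condition (ii) of Proposition~\ref{prop:ProductWP} automatic, so that well-positionedness reduces to each edge having a unique colour, which by part (i) is exactly the condition $p_v-p_w\notin C$.
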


\begin{proof}
Part $(i)$ follows immediately from the definitions in Section~\ref{sect:FrameworkColours}. Since the Euclidean norm is smooth, every bar-joint framework in the Euclidean plane and every bar-joint framework in $\bR$ is well-positioned. Also note that an edge $vw\in E$ has exactly one framework colour if and only if $p_v-p_w\notin C$.
Thus $(ii)$ follows from Proposition~\ref{prop:ProductWP}.
\end{proof}

Let $\omega=\omega(G,X,\|\cdot\|)\subset X^V$ denote the set of all well-positioned placements of a graph $G$ in a normed space $(X,\|\cdot\|)$.
A placement $p\in\omega$ is said to be {\em regular} if the function,
\[\omega\to \{1,\ldots,|E|\}, \,\,\,\,\,\, x\mapsto \rank\,df_G(x),\]
achieves its maximum value at $p$.

\begin{remark}
\label{rem:regular1}
The set $\omega(G,\bR^d,\|\cdot\|_2)$ of regular placements for a graph $G=(V,E)$ in Euclidean space is an open and dense subset of $(\bR^d)^V$. Moreover, if $G$ admits an infinitesimally rigid placement in $(\bR^d,\|\cdot\|_2)$ then all regular placements of $G$ in $(\bR^d,\|\cdot\|_2)$ are infinitesimally rigid. (See \cite[p.283 and Corollary 2]{asi-rot} for example). In this case, $G$ is said to be {\em generically rigid} in $(\bR^d,\|\cdot\|_2)$.
\end{remark}

A graph is said to be a \emph{Laman graph} if it is $(2,3)$-tight.

 \begin{theorem}
 \label{thm:cyl}
 Let $(G,p)$ be a well-positioned  bar-joint framework in the cylindrical normed space $(\bR^3,\|\cdot\|_{\cyl})$.
 The following statements are equivalent.
 \begin{enumerate}[(i)]
 \item $(G,p)$ is minimally infinitesimally rigid in $(\bR^3,\|\cdot\|_{\cyl})$.
 \item The projected monochrome subframeworks $(G_1,p_1)$ and $(G_2,p_2)$ are  minimally infinitesimally rigid in the Euclidean plane and the real line respectively.
 \item The monochrome subgraphs $G_1$ and $G_2$ are respectively a Laman graph and a tree, and $p_1$ is a regular placement of $G_1$ in the Euclidean plane. 
 \end{enumerate}
 \end{theorem}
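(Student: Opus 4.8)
The plan is to prove Theorem~\ref{thm:cyl} by chaining together the product-norm machinery already developed. The equivalence of~(i) and~(ii) is immediate from Theorem~\ref{ProjThm}, applied to the product decomposition $\bR^3 = X_1\times X_2$ with $X_1=\bR^2$ (Euclidean) and $X_2=\bR$: the monochrome subframeworks of $(G,p)$ are exactly the projected monochrome subframeworks $(G_1,p_1)$ in the Euclidean plane and $(G_2,p_2)$ on the real line, and Theorem~\ref{ProjThm} says $(G,p)$ is (minimally) infinitesimally rigid iff each of these is. So the content of the theorem lies in translating the analytic condition in~(ii) into the combinatorial condition in~(iii).

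First I would treat the $\bR$-factor. A bar-joint framework on the real line $\bR$ is minimally infinitesimally rigid precisely when its underlying graph (with the degree-zero vertices discarded, or rather: spanning) is a tree; this is the one-dimensional Euclidean fact already invoked in the corollary following Theorem~\ref{ProjThm}. One must be slightly careful because $G_2=(V,E_2)$ may have isolated vertices; but minimal infinitesimal rigidity of $(G_2,p_2)$ forces each vertex of $V$ to lie in some edge of $E_2$ (otherwise an isolated vertex carries a nontrivial flex that is not removable), so $G_2$ is a spanning tree on $V$. Conversely a spanning tree gives minimal rigidity on the line. This handles the $G_2$ half of the (ii)$\Leftrightarrow$(iii) equivalence.

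Next I would treat the $\bR^2$-factor, where the statement becomes: $(G_1,p_1)$ is minimally infinitesimally rigid in the Euclidean plane iff $G_1$ is a Laman graph (i.e.\ $(2,3)$-tight) and $p_1$ is a regular placement. This is exactly the classical Pollaczek-Geiringer--Laman theorem together with the genericity/regularity remarks recorded in Remark~\ref{rem:regular1}: for a Laman graph, all regular placements are infinitesimally rigid, and minimality of the edge count $|E_1|=2|V|-3$ together with the matroidal $(2,3)$-sparsity of proper subgraphs gives that every edge is essential. Conversely, if $(G_1,p_1)$ is minimally infinitesimally rigid then $p_1$ is regular (the rank of $df_{G_1}$ is as large as possible, since otherwise a perturbation would increase it and destroy minimality—or more directly, a non-regular minimally rigid placement would still be rigid but the regular placement of the same graph would be rigid with strictly more edges than needed, contradicting Laman), and Laman's theorem identifies the graph as $(2,3)$-tight. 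I would cite \cite{asi-rot,gra-ser-ser} for these standard facts.

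The main obstacle — really the only delicate point — is the bookkeeping around isolated vertices and the meaning of ``regular'' across the two factors: the placement $p$ of $G$ is well-positioned by hypothesis, but regularity of $p_1$ as a placement of $G_1$ in $\bR^2$ is a condition on the plane alone, and I must make sure the notion of regular placement from Remark~\ref{rem:regular1} for $G_1$ is the one that matches the rank of $df_{G_1}(p_1)$ appearing in Corollary~\ref{cor:diff_product}. Since $df_G(p)$ decomposes as $df_{G_1}(p_1)\oplus df_{G_2}(p_2)$ (up to the fixed isomorphisms $\Phi_V,\Phi_E$), and $df_{G_2}$ on the line has no genericity issues (all well-positioned placements on $\bR$ are regular), the rank of $df_G(p)$ is maximised exactly when that of $df_{G_1}(p_1)$ is; so regularity of $p$ for $G$ is equivalent to regularity of $p_1$ for $G_1$, and the condition in~(iii) is stated correctly. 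With that observation in hand the proof is just the concatenation of the two coordinate-wise analyses above. I would write it as: (i)$\Leftrightarrow$(ii) by Theorem~\ref{ProjThm}; (ii)$\Leftrightarrow$(iii) by the Laman theorem in the plane and the tree criterion on the line, together with Remark~\ref{rem:regular1}.
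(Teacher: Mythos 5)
Your proof is correct and takes essentially the same route as the paper: the equivalence of (i) and (ii) is Theorem~\ref{ProjThm} applied to the decomposition $\bR^3=\bR^2\times\bR$, and the equivalence of (ii) and (iii) is the classical tree criterion on the line together with Laman's theorem and the Asimow--Roth regularity facts in the plane (the paper simply cites these). The only remark worth making is that your closing claim that regularity of $p$ for $G$ in the cylinder norm is equivalent to regularity of $p_1$ for $G_1$ in the plane is neither needed nor obviously true (the induced colouring, and hence the decomposition of $df_G$, changes as $p$ varies over $\omega$), but since condition (iii) refers only to regularity of $p_1$ as a placement of $G_1$ in the Euclidean plane, this aside does not affect the argument.
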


\begin{proof}
Theorem~\ref{ProjThm} shows that $(i)$ and $(ii)$ are equivalent.
The equivalence of $(ii)$ and $(iii)$ is an application of standard results on infinitesimal rigidity for bar-joint frameworks in Euclidean space.
See for example \cite[\S3-4]{asi-rot}, Laman~\cite[Theorem 6.5]{laman} and \cite[Propositions 2.4 and 2.5]{whi84}. 
\end{proof}

The following theorem shows that $K_6-e$, the complete graph $K_6$ with a single edge removed, is the smallest graph which admits a well-positioned and minimally infinitesimally rigid bar-joint framework in a cylindrical normed space.

\begin{theorem}
\label{thm:cylmain}
Let $(X,\|\cdot\|)$ be a cylindrical normed space.
\begin{enumerate}[(i)]
\item If~$(G,p)$ is a  well-positioned and minimally infinitesimally rigid bar-joint framework in $(X,\|\cdot\|)$, then either $G=K_6-e$ or $|V(G)|\geq 7$.

\item There is a placement $p$ of $K_6-e$ in~$X$ so that $(K_6-e,p)$ is  well-positioned and minimally infinitesimally rigid in~$(X,\|\cdot\|)$. 
\end{enumerate}
\end{theorem}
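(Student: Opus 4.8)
The plan is to reduce both parts to Theorem~\ref{thm:cyl} after identifying $(X,\|\cdot\|)$ with $(\bR^3,\|\cdot\|_{\cyl})$ via Lemma~\ref{CylLemma}(i). Recall that for a well-positioned framework $(G,p)$ in $(\bR^3,\|\cdot\|_{\cyl})$, Proposition~\ref{prop:ProductWP}(i) shows that each edge has a unique framework colour, so $G$ is the edge-disjoint union of its monochrome subgraphs $G_1,G_2$, and by Theorem~\ref{thm:cyl} the framework is minimally infinitesimally rigid precisely when $G_1=(V,E_1)$ is Laman, $G_2=(V,E_2)$ is a spanning tree, and $p_1=P_1\circ p$ is a regular placement of $G_1$ in the Euclidean plane.

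\emph{Part (i).} Suppose $(G,p)$ is well-positioned and minimally infinitesimally rigid. By Theorem~\ref{thm:cyl}, $G_1$ is Laman and $G_2$ is a spanning tree; in particular $|V|\ge2$. Since $E_1,E_2$ partition $E$, we get $|E(G)|=|E_1|+|E_2|=(2|V|-3)+(|V|-1)=3|V|-4$. As $G$ is simple, $3|V|-4\le\binom{|V|}{2}$, equivalently $|V|^2-7|V|+8\ge0$, which (together with $|V|\ge2$) forces $|V|\ge6$. If $|V|=6$ then $|E(G)|=14=\binom 6 2-1$, so $G=K_6-e$; otherwise $|V|\ge7$.

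\emph{Part (ii).} By Theorem~\ref{thm:cyl} and Lemma~\ref{lem:cylwp} it suffices to produce a placement $p=(q,z)\colon V\to\bR^2\times\bR$ of $K_6-e$ such that no difference $p_v-p_w$ lies on the cone $C$, the edges $vw$ with $\|q_v-q_w\|>|z_v-z_w|$ form a Laman graph $G_1$, those with $\|q_v-q_w\|<|z_v-z_w|$ form a spanning tree $G_2$, and $q$ is a regular planar placement of $G_1$. I would write $V=\{v_1,\dots,v_6\}$, delete $e=v_1v_6$, and take $G_2$ to be the Hamiltonian path $v_1v_2v_3v_4v_5v_6$; then $G_1=(K_6-e)\setminus G_2$ is the triangular prism with triangles $\{v_1,v_3,v_5\}$, $\{v_2,v_4,v_6\}$ and rungs $v_1v_4,v_2v_5,v_3v_6$, and a short computation confirms $G_1$ is Laman. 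Fixing $h>0$, set $z_{v_i}=0$ for $i$ odd and $z_{v_i}=h$ for $i$ even, so that the six triangle edges have height-difference $0$ while the three rung edges and the five path edges all have height-difference $h$; and place $q_{v_1},\dots,q_{v_6}$ in general position near the colinear points $((i-1)c,0)$ with $h/3<c<h$. Then the path edges have planar length near $c<h$ (colour $2$), the rung edges join path-vertices three steps apart and so have planar length near $3c>h$ (colour $1$), and the triangle edges have positive planar length (colour $1$); all of these inequalities being strict, $(K_6-e,p)$ is well-positioned with monochrome subgraphs exactly $G_1$ and $G_2$, while the general-position choice makes $q$ a regular, hence infinitesimally rigid, placement of the Laman graph $G_1$. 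Theorem~\ref{thm:cyl} then gives that $(K_6-e,p)$ is minimally infinitesimally rigid.

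The crux is the construction in~(ii): one cannot separate the two colour classes by a global rescaling, since $G_1$ and $G_2$ are each connected and spanning, so forcing every $G_2$-edge short in the plane makes the whole configuration small in diameter and hence every $G_1$-edge short as well. The height function must do the separating, subject to the triangle inequality along the $G_2$-paths joining $G_1$-neighbours; the alternating-height choice for the prism resolves this tension by sending the ``short'' planar constraints onto the consecutive path edges and making the triangle edges insensitive to height (height-difference $0$). With the set-up fixed, checking regularity of the perturbed planar placement and verifying the colour of each of the $14$ edges of $K_6-e$ is routine.
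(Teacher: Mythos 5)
Your proof is correct, and it reaches the conclusion by a genuinely different route in both parts. For part (i), the paper first invokes fullness (Lemma~\ref{CylLemma}(iii)) to apply the Maxwell count of Theorem~\ref{thm:maxwell}(ii), obtaining $|E|=3|V|-4$, and then rules out $|V|\le 5$ via Lemma~\ref{lem:comb}; you instead read the count $|E|=(2|V|-3)+(|V|-1)=3|V|-4$ directly off the Laman-plus-spanning-tree decomposition supplied by Theorem~\ref{thm:cyl}, and finish with the elementary simplicity bound $3|V|-4\le\binom{|V|}{2}$. Both are valid, and yours bypasses the fullness/Maxwell machinery entirely. For part (ii), the paper uses a different Laman graph $G_1$ together with an explicit $9\times 12$ rigidity-matrix whose rank it checks by hand to certify that $p_1$ is regular; you take $G_2$ to be a Hamiltonian path, identify $G_1=(K_6-e)\setminus G_2$ as the triangular prism (which is indeed $(2,3)$-tight), and separate the colours with an alternating height function so that the five path edges get colour $2$, the three rungs get colour $1$ via $3c>h$, and the six triangle edges get colour $1$ because their height difference vanishes --- a cleaner combinatorial picture than the paper's ad hoc configuration. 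The one spot that needs a touch-up is the sentence asserting that the general-position choice \emph{makes} $q$ regular: general position does not by itself imply regularity of a placement of $G_1$. The fix is immediate and is exactly the argument the paper itself uses in the proof of Theorem~\ref{thm:Km1}: regular placements of $G_1$ form an open dense subset of $(\bR^2)^V$ (Remark~\ref{rem:regular1}), and the fourteen strict colour inequalities together with the well-positioning condition of Lemma~\ref{lem:cylwp} are open conditions, so one may choose $q$ regular within the required neighbourhood of the collinear configuration. With that wording corrected, the proof is complete.
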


\begin{proof}
$(i)$ By Lemma \ref{CylLemma}, $(G,p)$ is full and we may assume, without loss of generality, that $(X,\|\cdot\|)=(\H_2(\bR),\|\cdot\|_{c_1})$. By the Maxwell condition for $(\H_2(\bR),\|\cdot\|_{c_1})$ given in Theorem \ref{thm:maxwell}$(ii)$, we have $|E|=3|V|-4$. 
If $|V|\in\{2,3,4,5\}$ then, by Lemma \ref{lem:comb}, 
$|E|<3|V|-4$. Thus $|V|\geq 6$.
If $|V|=6$ then $|E|=3|V|-4=14=\binom 62-1$ edges. Thus $G=K_6-e$.

$(ii)$ 
It is sufficient to construct such a placement of $K_6-e$ in  
$(\bR^3,\|\cdot\|_{\cyl})$.
Let $V=\{v_i\colon 1\leq i\leq 6\}$ be the vertex set of~$G=K_6-e$ where $e=v_5v_6$.
Let $\epsilon,\delta\in(0,\tfrac12)$  and consider the placement $p:V\to\bR^3$ with
\begin{align*} 
  v_1&\mapsto(0,-1,-1),&
  v_2&\mapsto(0,1,-1)\\
  v_3&\mapsto(0,1,1+2\epsilon),&
  v_4&\mapsto(0,-1,1-2\epsilon)\\
v_5&\mapsto (2\delta,1,-1),& v_6&\mapsto (2\delta ,-1,1-2\epsilon).
\end{align*}
A calculation using Lemma~\ref{lem:cylwp} shows that $(G,p)$ is well-positioned, with monochrome subgraphs~$G_1=\kappa_p^{-1}(\{1\})$ and~$G_2=\kappa_p^{-1}(\{2\})$ as indicated in Figures~\ref{fig:K6-e-colouring} and~\ref{fig:mono-subgraphs}. Note that $G_1$ is a Laman graph and $G_2$ is a tree.
We claim that $p_1$ is a regular placement of $G_1$ in the Euclidean plane. This is an exercise in elementary planar rigidity. The rank of the differential $df_{G_1}(p_1)$ may be computed as the rank of an associated (Euclidean) {\em rigidity matrix} $R(G_1,p_1)$ with rows indexed by 
$E(G_1)$
and block columns indexed by~$V$. The $(v_iv_j,v_i)$-entry, for each edge $v_iv_j$, is the row vector $(p_1(v_i)-p_1(v_j))^t$. All remaining entries are zero. (See \cite[Chapter~2]{gra-ser-ser}). In this case, ordering the edges of~$G_1$ as $(15,  45, 25, 12, 46, 26, 24, 34, 36)$ we obtain
\[\newcommand{\hl}[1]{#1}%
R(G_1,p_1)=
2\left[\begin{smallmatrix}
    \hl{-\delta} & -1      & 0            & 0           & 0       & 0      & 0            & 0       & \delta      & 1           & 0           & 0     \\
    0            & 0       & 0            & 0           & 0       & 0      & -\delta      & -1 & \delta      & \hl{1} & 0           & 0     \\
    0            & 0       & -\delta      & 0           & 0       & 0      & 0            & 0       & \hl{\delta} & 0           & 0           & 0     \\
    0            & \hl{-1} & 0            & 1           & 0       & 0      & 0            & 0       & 0           & 0           & 0           & 0     \\
    0            & 0       & 0            & 0           & 0       & 0      & \hl{-\delta} & 0    & 0           & 0           & \delta      & 0 \\
    0            & 0       & \hl{-\delta} & 1           & 0       & 0      & 0            & 0       & 0           & 0           & \delta      & -1    \\
    0            & 0       & 0            & \hl{1} & 0       & 0      & 0            & -1 & 0           & 0           & 0           & 0     \\
    0            & 0       & 0            & 0           & 0       & 1      & 0            & \hl{-1} & 0           & 0           & 0           & 0     \\
    0            & 0       & 0            & 0           & -\delta & 1 & 0            & 0       & 0           & 0           & \hl{\delta} & -1 
\end{smallmatrix}\right]
\]
Note that each row contains a nonzero entry with only zeros below. Hence, the rows of $R(G_1,p_1)$ are linearly independent and the rank of the differential $df_{G_1}(x)$ at $p_1$ is maximal. Thus $p_1$ is a regular placement of $G_1$ and so,  by Theorem~\ref{thm:cyl}, $(G,p)$ is minimally infinitesimally rigid in $(\bR^3,\|\cdot\|_{\cyl})$.
\end{proof}

  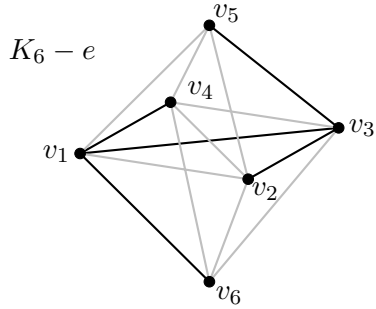
\begin{figure}[ht]
 \centering
 \begin{tabular}{  l c r  }

\hspace{10mm}
\begin{minipage}{.6\textwidth}
  \begin{tikzpicture}[scale=1.7]
 
  \clip (-1.8,-1.3) rectangle (1.5cm,1.3cm); %
  
  \coordinate (A1) at (-1,0);
  \coordinate (A2) at (0.3,-0.2);
	\coordinate (A3) at (1,0.2);
  \coordinate (A4) at (-0.3,0.4);
  \coordinate (B) at (0,1);
  \coordinate (C) at (0,-1);
  
 \draw[thick] (B) -- (A3) -- (A1) -- (C);
 \draw[thick] (A3) -- (A2);
 \draw[thick] (A1) -- (A4);
 \draw[thick,lightgray] (B) -- (A1) -- (A2) -- (C) -- (A4) -- (B) -- (A2);
 \draw[thick,lightgray] (C) -- (A3) -- (A4) -- (A2);
	
  \node[draw,circle,inner sep=1.4pt,fill] at (A1) {};
  \node[draw,circle,inner sep=1.4pt,fill] at (A3) {};
  \node[draw,circle,inner sep=1.4pt,fill] at (A4) {};
  \node[draw,circle,inner sep=1.4pt,fill] at (A2) {};
  \node[draw,circle,inner sep=1.4pt,fill] at (B) {};
  \node[draw,circle,inner sep=1.4pt,fill] at (C) {};

  \node[left] at (-0.8,0.8) {$K_6-e$};
	
  \node[left] at (A1) {$v_1$};
  \node[right] at (A3) {$v_3$};
	\node[above right] at (-0.25,0.35) {$v_4$};
  \node[below right] at (0.25,-0.15) {$v_2$};
	\node[above right] at (-0.05,0.95) {$v_5$};
  \node[below right] at (-0.03,-0.96) {$v_6$};
  \end{tikzpicture}
\end{minipage}

 \end{tabular}
\caption{The framework colouring of $K_6-e$ in the proof of Theorem~\ref{thm:cylmain}. 
The monochrome subgraphs $G_1$ and $G_2$ are indicated in gray and black respectively.}
\label{fig:K6-e-colouring}
\end{figure}

\begin{figure}[ht]
 \centering
 \begin{tabular}{  l c r  }

 \begin{minipage}{.35\textwidth}
 \begin{tikzpicture}[scale=1.2]
 
    \clip (-2,-0.7) rectangle (1.8cm,1.5cm); %
  
  \coordinate (A5) at (-1,1);
  \coordinate (A6) at (1,0);
  \coordinate (A1) at (-1,0);
  \coordinate (A2) at (0,0);
  \coordinate (A3) at (1,1);
  \coordinate (A4) at (0,1);
  
 \draw[thick,lightgray] (A5) -- (A1) -- (A2) -- (A5) -- (A4) -- (A2) -- (A6) -- (A4) -- (A3) -- (A6);

  \node[draw,circle,inner sep=1.4pt,fill] at (A5) {};
  \node[draw,circle,inner sep=1.4pt,fill] at (A6) {};
  \node[draw,circle,inner sep=1.4pt,fill] at (A1) {};
  \node[draw,circle,inner sep=1.4pt,fill] at (A2) {};
  \node[draw,circle,inner sep=1.4pt,fill] at (A3) {};
  \node[draw,circle,inner sep=1.4pt,fill] at (A4) {};

  \node[left] at (-1.4,0.7) {$G_1$};
  
	\node[left] at (A5) {$v_5$};
  \node[right] at (A6) {$v_6$};
	\node[left] at (A1) {$v_1$};
  \node[below] at (A2) {$v_2$};
	\node[right] at (A3) {$v_3$};
  \node[above] at (A4) {$v_4$};
  \end{tikzpicture}
\end{minipage}

\hspace{10mm}
 \begin{minipage}{.35\textwidth}
 \begin{tikzpicture}[scale=1.2]
 
    \clip (-2,-0.7) rectangle (1.8cm,1.5cm); %
  
  \coordinate (A5) at (1,1);
  \coordinate (A6) at (-1,0);
  \coordinate (A1) at (-1,0.5);
  \coordinate (A2) at (1,0);
  \coordinate (A3) at (1,0.5);
  \coordinate (A4) at (-1,1);
  
\draw[thick] (A4) -- (A1) -- (A3) -- (A5);
\draw[thick] (A6) -- (A1);
\draw[thick] (A2) -- (A3);

  \node[draw,circle,inner sep=1.4pt,fill] at (A1) {};
  \node[draw,circle,inner sep=1.4pt,fill] at (A2) {};
  \node[draw,circle,inner sep=1.4pt,fill] at (A3) {};
  \node[draw,circle,inner sep=1.4pt,fill] at (A4) {};
  \node[draw,circle,inner sep=1.4pt,fill] at (A5) {};
  \node[draw,circle,inner sep=1.4pt,fill] at (A6) {};

  \node[left] at (-1.45,0.7) {$G_2$};
 
	\node[right] at (A5) {$v_5$};
  \node[left] at (A6) {$v_6$};
	\node[left] at (A1) {$v_1$};
  \node[right] at (A2) {$v_2$};
	\node[right] at (A3) {$v_3$};
  \node[left] at (A4) {$v_4$};
	
  \end{tikzpicture}
\end{minipage}

\end{tabular}
\caption{The monochrome subgraphs $G_1$ and $G_2$ of $K_6-e$ constructed in Theorem~\ref{thm:cylmain} are respectively a Laman graph and a tree.}
\label{fig:mono-subgraphs}
\end{figure}
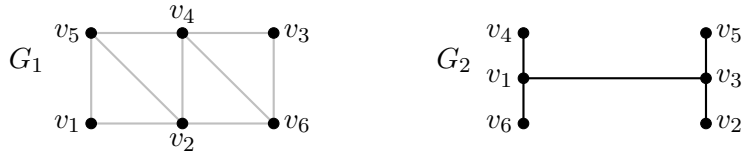

\begin{remark}
Applying the isometric  isomorphism~$\Psi$ from the proof of Lemma~\ref{CylLemma} to the framework constructed in Theorem~\ref{thm:cylmain}, we obtain the following matrices which, for $\epsilon,\delta\in (0,\tfrac12)$  %
form a minimally infinitesimally rigid framework $(K_6-e,p)$ in $(\H_2(\bR),\|\cdot\|_{c_1})$, where $e=v_5v_6$.
\begin{align*}
p_{v_1}&=\left(\begin{array}{cc}
-1         & 0      \\
0          & 0
\end{array}\right),&
p_{v_2}&=\left(\begin{array}{cc}
0          & 0      \\
0          & -1
\end{array}\right),
\\
p_{v_3}&=\left(\begin{array}{cc}
1+\epsilon & 0      \\
0          & \epsilon
\end{array}\right),&
p_{v_4}&=\left(\begin{array}{cc}
-\epsilon  & 0      \\
0          & 1-\epsilon
\end{array}\right),
\\
p_{v_5}&=\left(\begin{array}{cc}
0         & \delta \\
\delta     & -1
\end{array}\right),&
p_{v_6}&=\left(\begin{array}{cc}
-\epsilon & \delta \\
\delta     & 1-\epsilon
\end{array}\right).
\end{align*}
\end{remark}

\begin{theorem}
\label{thm:Km1}
Let $(X,\|\cdot\|)$ be a cylindrical normed space.
If~$m\ge6$, then there is a placement $p$ of the complete graph $K_m$ in $X$ so that $(K_m,p)$ is  well-positioned and infinitesimally rigid in $(X,\|\cdot\|)$.
\end{theorem}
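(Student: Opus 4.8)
The plan is first to use Lemma~\ref{CylLemma}(i) together with the definition of a cylindrical normed space to reduce to the concrete space $(\bR^3,\|\cdot\|_{\cyl})$: an isometric isomorphism preserves well-positionedness, flexes and trivial flexes, hence infinitesimal rigidity. One then proves the statement for $(\bR^3,\|\cdot\|_{\cyl})$ by induction on $m\ge6$, using Theorem~\ref{ProjThm} to reduce at each stage to the two projected monochrome subframeworks, for which infinitesimal rigidity means, respectively, classical infinitesimal rigidity in the Euclidean plane, and the monochrome graph being spanning and connected on the real line.

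For the base case $m=6$, I would take the well-positioned, minimally infinitesimally rigid placement $p$ of $K_6-e$ (with $e=v_5v_6$) produced in Theorem~\ref{thm:cylmain}(ii) and simply reinstate the edge $v_5v_6$. A direct computation gives $p_{v_5}-p_{v_6}=(0,2,-2+2\epsilon)$, which for $\epsilon\in(0,\tfrac12)$ does not lie on the cone $C=\{(x,y,z)\colon x^2+y^2=z^2\}$ (indeed $4>4(1-\epsilon)^2$), so by Lemma~\ref{lem:cylwp} the framework $(K_6,p)$ is still well-positioned, with monochrome subgraphs $G_1+v_5v_6$ and $G_2$. Since $(G_1,p_1)$ is infinitesimally rigid in the plane, and adding an edge cannot enlarge the flex space while leaving the space of trivial flexes unchanged, $(G_1+v_5v_6,p_1)$ is infinitesimally rigid in the plane; $(G_2,p_2)$ is unchanged, hence still infinitesimally rigid in $\bR$. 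Theorem~\ref{ProjThm} then yields infinitesimal rigidity of $(K_6,p)$.

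For the inductive step, suppose $(K_m,p)$ is well-positioned and infinitesimally rigid in $(\bR^3,\|\cdot\|_{\cyl})$ with $m\ge6$, and place the new vertex at $p'_{v_{m+1}}=p_{v_1}+(0,0,s)$ for a sufficiently small $s>0$. For small $s$, each new edge $v_{m+1}v_j$ with $j\ge2$ has difference close to $p_{v_1}-p_{v_j}\notin C$, so it stays off $C$ and keeps the framework colour of $v_1v_j$, while $v_{m+1}v_1$ has difference $(0,0,s)$ and hence receives colour~$2$. Thus $(K_{m+1},p')$ is well-positioned; its colour-$2$ subgraph is the old one with $v_{m+1}$ joined to $v_1$ (and to all colour-$2$ neighbours of $v_1$), so it remains spanning and connected; and its colour-$1$ subgraph is the old $G_1$ with $v_{m+1}$ joined to exactly $N_{G_1}(v_1)$ and placed, in the plane, at the point $p_1(v_1)$. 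The key observation is that, since $(G_1,p_1)$ is infinitesimally rigid in the plane on $m\ge3$ vertices, $v_1$ has at least two $G_1$-neighbours $v_a,v_b$ with $p_1(v_1),p_1(v_a),p_1(v_b)$ not collinear — otherwise the infinitesimal flex moving $v_1$ orthogonally to the line through its neighbours and fixing all other vertices would be non-trivial. Hence the new colour-$1$ framework contains a non-degenerate vertex addition ($0$-extension) of $(G_1,p_1)$ and is infinitesimally rigid in the plane. Applying Theorem~\ref{ProjThm} again completes the induction, and transporting back through the isometry gives the assertion for every cylindrical normed space.

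The only real subtlety lies in the inductive step: one must ensure that the new vertex simultaneously gains at least two colour-$1$ edges in a non-degenerate planar configuration (to keep $G_1$ rigid) \emph{and} at least one colour-$2$ edge (to keep $G_2$ spanning). Perturbing purely in the $z$-direction from an existing vertex $v_1$ handles both at once: $v_{m+1}$ inherits the colour pattern of $v_1$ towards the other vertices, which by infinitesimal rigidity already supplies a non-degenerate pair of colour-$1$ neighbours, while the single genuinely new edge $v_{m+1}v_1$ is forced to be colour~$2$. The remaining points — that small $s$ keeps all edge differences off $C$, that $p'$ is a genuine placement (all adjacent pairs distinct), and the standard fact that a $0$-extension preserves planar infinitesimal rigidity — are routine.
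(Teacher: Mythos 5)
Your proof is correct and follows the same overall strategy as the paper: reduce via the isometric isomorphism to the concrete space $(\bR^3,\|\cdot\|_{\mathrm{cyl}})$, obtain $(K_6,p)$ by reinstating the edge $v_5v_6$ in the $K_6-e$ framework of Theorem~\ref{thm:cylmain} (the paper likewise notes that this edge receives colour $1$), and then induct on $m$ by placing each new vertex as a small perturbation of an existing vertex so that it inherits that vertex's colour pattern towards all the others, finishing with Theorem~\ref{ProjThm}. Where you genuinely diverge is in the planar part of the inductive step. The paper perturbs the new vertex generically near $v_5$, observes that the new colour-$1$ graph contains a spanning Laman graph, and invokes generic rigidity together with the density of regular placements (Remark~\ref{rem:regular1}) to select a regular, hence infinitesimally rigid, planar placement. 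You instead perturb purely in the $z$-direction, so the new vertex projects onto $p_1(v_1)$, and verify rigidity of the resulting \emph{specific} planar placement directly as a $0$-extension: infinitesimal rigidity of $(G_1,p_1)$ forces $v_1$ to have two colour-$1$ neighbours $v_a,v_b$ with $p_1(v_1),p_1(v_a),p_1(v_b)$ non-collinear, and attaching the new vertex at $p_1(v_1)$ to $v_a$ and $v_b$ is then a non-degenerate vertex addition. This buys a deterministic construction that needs no genericity or regularity argument at each stage. Two small points are worth making explicit: in your non-collinearity argument one must exclude the degenerate case where all of $p_1(v_2),\dots,p_1(v_m)$ coincide (so that a flex fixing them could still be trivial), but then $G_1$ would be contained in a star centred at $v_1$ and would have too few edges to be infinitesimally rigid, so this cannot occur; and $s$ must avoid the finitely many values for which $p'(v_{m+1})$ collides with an existing joint, as you note.
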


\begin{proof}
  Again, it is sufficient to construct such a placement of $K_m$ in  
$(\bR^3,\|\cdot\|_{\cyl})$.
Consider the  well-positioned and minimally infinitesimally rigid framework $(K_6-e,p)$ constructed in Theorem~\ref{thm:cylmain}, with corresponding induced monochrome subgraphs $G_1$ and $G_2$ of $G=K_6-e$. Since $p(v_5)\ne p(v_6)$, the placement $p$ also yields a  bar-joint framework $(K_6,p)$, with respect to which $\kappa_p(v_5v_6)=\{1\}$. Thus $(K_6,p)$ is  well-positioned and infinitesimally rigid in $(\bR^3,\|\cdot\|_{\cyl})$. 

Now consider the complete graph $K_7$ obtained by adjoining a vertex $v_7$ to $K_6$. We will show that we can extend $p$ to a suitable placement of $K_7$ by choosing $p(v_7)$ to be a small perturbation of $p(v_5)$.
By Lemma~\ref{lem:cylwp}, there is an open neighbourhood $U$ of $p(v_5)$ which does not contain $p(v_i)$ for $1\leq i\leq 6$ with $i\ne 5$, such that for any choice of $p(v_7)$ in~$U\setminus\{p(v_5)\}$, the extended bar-joint framework $(K_7,p)$ 
is well-positioned and satisfies $\kappa_p(v_iv_7)=\kappa_p(v_iv_5)$  for $i=1,2,3,4,6$. 
Let $G_1'$ and $G_2'$ be the induced monochrome subgraphs of $K_7$ with framework colours $1$ and $2$ respectively.
Note that $G_2'$  contains a spanning tree obtained by adjoining the vertex $v_7$ and the edge $v_3v_7$ to $G_2$. Also observe that $G_1'$ has a spanning subgraph, obtained by adjoining the vertex $v_7$ and the edges $v_1v_7$, $v_2v_7$ to $G_1$, which is also a Laman graph, hence is minimally infinitesimally rigid in~$(\bR^2,\|\cdot\|_2)$. By Remark~\ref{rem:regular1}, every regular placement of $G_1'$ in $(\bR^2,\|\cdot\|_2)$ is infinitesimally rigid and the set of regular placements for $G_1'$ is dense in $(\bR^2)^V$, so we may choose $p(v_7)$ in~$U\setminus\{p(v_5)\}$ such that $p_1$ is a regular placement of $G_1'$. 
Thus, by %
Theorem~\ref{thm:cyl},
$(K_7,p)$ has a minimally infinitesimally rigid subframework and so is itself infinitesimally rigid. 

We can now apply this method iteratively to obtain a well-positioned and infinitesimally rigid placement of $K_m$ for any $m>6$. 
\end{proof}

\subsection{Hermitian matrices}
\newcommand{\hcyl}{\text{\upshape{hcyl}}}
Similar methods may be applied in the case of $(\H_2(\bC),\|\cdot\|_{c_1})$. Denote by $\|\cdot\|_{\hcyl}$ the product norm on $\bR^4=\bR^3\times \bR$ given by
\[\|(w,x,y,z)\|_{\hcyl} = \max \{ \sqrt{w^2+x^2+y^2},|z|\}.\]
A normed space which is isometrically isomorphic to $(\bR^4,\|\cdot\|_{\hcyl})$ will be referred to as a \emph{hyper-cylindrical normed space}.

\begin{lemma}
\label{HCylLemma}
\begin{enumerate}[(i)]
\item $(\H_2(\bC),\|\cdot\|_{c_1})$ is a hyper-cylindrical normed space. 

\item Every hyper-cylindrical normed space $(X,\|\cdot\|)$ 
satisfies $\dim \T(X, \|\cdot\|) =7$.

\end{enumerate}
\end{lemma}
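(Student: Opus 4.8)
The plan is to mirror the cylindrical case treated in Lemma~\ref{CylLemma}. For part~(i), I would exhibit the explicit map
\[\Psi\colon(\bR^4,\|\cdot\|_{\hcyl})\to(\H_2(\bC),\|\cdot\|_{c_1}),\qquad
\Psi(w,x,y,z)=\frac12\begin{pmatrix} z+y & x+iw\\ x-iw & z-y\end{pmatrix},\]
which is the natural complexification of the isometric isomorphism used in Lemma~\ref{CylLemma}: the real off-diagonal entry $x$ is replaced by $x+iw$. It is immediate that $\Psi$ is a real-linear bijection onto $\H_2(\bC)$, since the hermitian matrix with diagonal entries $a,d$ and off-diagonal entry $\beta$ equals $\Psi(2\Im\beta,\,2\Re\beta,\,a-d,\,a+d)$. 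To verify that $\Psi$ is isometric I would compute the spectrum of $M=\Psi(w,x,y,z)$: since $\tr M=z$ and $\det M=\frac14(z^2-w^2-x^2-y^2)$, the eigenvalues are $\lambda_\pm=\frac12\bigl(z\pm\sqrt{w^2+x^2+y^2}\bigr)$, so the same short case analysis as in Lemma~\ref{CylLemma} gives $\|M\|_{c_1}=|\lambda_+|+|\lambda_-|=\max\{|z|,\sqrt{w^2+x^2+y^2}\}=\|(w,x,y,z)\|_{\hcyl}$.

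For part~(ii), the dimension of the space of infinitesimal rigid motions is clearly invariant under isometric isomorphism, so by part~(i) it suffices to compute $\dim\T(\H_2(\bC),\|\cdot\|_{c_1})$. The trace norm is admissible on $\H_2(\bC)$ by Remark~\ref{rk:admissible}, so Proposition~\ref{prop:dim2} applies with $\bF=\bC$ and $n=2$ and yields $\dim\T(\H_2(\bC),\|\cdot\|_{c_1})=2n^2-1=7$. As a cross-check one can instead use the product structure directly: $(\bR^4,\|\cdot\|_{\hcyl})$ is the product norm on $\bR^3\times\bR$ with $\bR^3$ carrying its Euclidean norm, so Theorem~\ref{thm:productflex} gives $\T(\bR^4,\|\cdot\|_{\hcyl})\cong\T(\bR^3,\|\cdot\|_2)\oplus\T(\bR,|\cdot|)$, and the classical count $\dim\T(\bR^3,\|\cdot\|_2)=\binom{4}{2}=6$ together with $\dim\T(\bR,|\cdot|)=1$ again gives $7$.

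I do not expect any real obstacle here: both parts are routine once $\Psi$ is written down, and the only point requiring minor care is the eigenvalue computation establishing that $\Psi$ is an isometry. Note that, unlike Lemma~\ref{CylLemma}, this lemma carries no fullness statement; indeed in $(\bR^3,\|\cdot\|_2)$ the space $\T(\bR^3,\|\cdot\|_2)$ is not separated by a two-point set (a single edge cannot pin down the rotations of $\bR^3$), so a one-edge framework in a hyper-cylindrical space need not be full.
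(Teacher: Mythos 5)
Your proof is correct and follows essentially the same route as the paper: the same explicit isometric isomorphism $\Psi$ (up to a harmless sign on $w$), with the isometry verified by the trace/determinant eigenvalue computation that the paper carries out in the analogous Lemma~\ref{CylLemma}, and part~(ii) deduced from Proposition~\ref{prop:dim2} exactly as in the paper. Your cross-check via Theorem~\ref{thm:productflex} and the closing remark on fullness are both correct but not needed.
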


\begin{proof}
The map
\[\Psi: (\bR^4,\|\cdot\|_{\hcyl}) 
\to (\H_2(\bC),\|\cdot\|_{c_1}), \,\,\,\,\,\,
(w,x,y,z)\mapsto \frac{1}{2}\left(\begin{array}{cc}
z+y & x-wi \\
x+wi & z-y
\end{array}
\right)
\]
is an isometric isomorphism. 
Statement $(ii)$ follows from the corresponding property of $(\H_2(\bC),\|\cdot\|_{c_1})$, established in Proposition~\ref{prop:dim2}.
\end{proof}

\begin{lemma}
\label{lem:hcylwp}
Let $(G,p)$ be a bar-joint framework in $(\bR^4,\|\cdot\|_{\hcyl})$.

\begin{enumerate}[(i)]
\item For $p_v-p_w=(u,x,y,z)$ where $vw\in E$, we have $1\in \kappa_p(vw)$ if and only if $u^2+x^2+y^2\geq z^2$, and $2\in \kappa_p(vw)$ if and only if $u^2+x^2+y^2\leq z^2$.

\item
$(G,p)$ is well-positioned in $(\bR^4,\|\cdot\|_{\hcyl})$ if and only if $p_v-p_w$ does not lie in the cone $C=\{(u,x,y,z)\in\bR^4: u^2+x^2+y^2=z^2\}$ for each edge $vw\in E$. 
\end{enumerate}
\end{lemma}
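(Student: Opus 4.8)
The plan is to transcribe the proof of the cylindrical case, Lemma~\ref{lem:cylwp}, since $(\bR^4,\|\cdot\|_{\hcyl})$ is by definition the product norm on $X_1\times X_2$ with $X_1=\bR^3$ carrying the Euclidean norm and $X_2=\bR$ carrying the absolute value, both of which are smooth norms on their respective spaces (away from the origin).

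For part~(i), I would simply unwind the definition of the colour set $\kappa$ from Section~\ref{sect:FrameworkColours}. Writing $p_v-p_w=(u,x,y,z)$, the two factor norms of this vector are $\|P_1(p_v-p_w)\|_1=\sqrt{u^2+x^2+y^2}$ and $\|P_2(p_v-p_w)\|_2=|z|$, while $\|p_v-p_w\|_{\hcyl}$ is the maximum of the two. Hence $1\in\kappa_p(vw)$ precisely when $\sqrt{u^2+x^2+y^2}\ge|z|$, i.e.\ $u^2+x^2+y^2\ge z^2$, and likewise $2\in\kappa_p(vw)$ precisely when $u^2+x^2+y^2\le z^2$.

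For part~(ii), I would invoke Proposition~\ref{prop:ProductWP}: $(G,p)$ is well-positioned in the product norm if and only if (a)~every edge has exactly one framework colour, and (b)~each projected monochrome subframework $(G_j,p_j)$ is well-positioned in $(X_j,\|\cdot\|_j)$ for $j=1,2$. Condition~(b) is automatic here, because the Euclidean norm on $\bR^3$ and the absolute value on $\bR$ are both smooth, so every bar-joint framework in either factor is well-positioned (by Lemma~\ref{lem:flexcondition}, using that $p_v\ne p_w$ on edges). By part~(i), an edge $vw$ carries both colours exactly when $u^2+x^2+y^2=z^2$, that is, when $p_v-p_w\in C$; so condition~(a) holds for every edge if and only if no edge difference lies in~$C$. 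Combining (a) and (b) yields the stated characterisation.

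There is essentially no obstacle: the argument is a direct adaptation of Lemma~\ref{lem:cylwp}, with the $\bR^2$-factor replaced by an $\bR^3$-factor, and the only point worth remarking is the smoothness of the two factor norms, which is standard.
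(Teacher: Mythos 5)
Your proof is correct and follows exactly the paper's route: the paper proves the cylindrical case (Lemma~\ref{lem:cylwp}) by unwinding the definition of $\kappa_p$ for part~(i) and invoking Proposition~\ref{prop:ProductWP} together with smoothness of the two factor norms for part~(ii), and then simply declares the hyper-cylindrical case analogous. Your transcription, with the $\bR^2$-factor replaced by $\bR^3$, is precisely what the authors intend.
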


\begin{proof}
The proof is analogous to Lemma \ref{lem:cylwp}.
\end{proof}

Note that in contrast to cylindrical normed spaces, a hyper-cylindrical normed space admits bar-joint frameworks which are not full.

We can now show that $K_7$ is the smallest graph which admits a full, well-positioned rigid and infinitesimally rigid bar-joint framework in hyper-cylindrical normed spaces.

\begin{theorem}
\label{thm:hcyl}
Let $(X,\|\cdot\|)$ be a hyper-cylindrical normed space.
\begin{enumerate}[(i)]
\item If~$(G,p)$ is   a  full, well-positioned and infinitesimally rigid bar-joint framework in $(X,\|\cdot\|)$, then either $G=K_7$ or $|V|\geq 8$.

\item For every $G\in \{K_m\colon m\ge7\}$, there is a placement $p$  in~$X$ so that $(G,p)$ is full, well-positioned and infinitesimally rigid in~$(X,\|\cdot\|)$. 
\end{enumerate}
\end{theorem}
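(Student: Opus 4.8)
The plan is to deduce (i) from the Maxwell-type bound together with the combinatorics of Lemma~\ref{lem:comb}, and to prove (ii) by an explicit base case followed by a vertex-addition induction, throughout exploiting the product-norm machinery of Section~\ref{ProductNorms}. By Lemma~\ref{HCylLemma}(i) we may identify $(X,\|\cdot\|)$ isometrically with $(\H_2(\bC),\|\cdot\|_{c_1})$, which is admissible with $(k(X),l(X))=(4,7)$; since fullness, well-positionedness, infinitesimal rigidity and the integers $k$ and $l$ are all isometric invariants, this is harmless, and we freely use the model $(\bR^4,\|\cdot\|_{\hcyl})=\bR^3\times\bR$. For (i), let $(G,p)$ be full, well-positioned and infinitesimally rigid with $|V|=m$, so Theorem~\ref{thm:maxwell}(i) gives $|E|\ge 4m-7$. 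If $m\le 7$ then $|E|\le|E(K_m)|\le 4m-7$ by Lemma~\ref{lem:comb}(i), forcing $G=K_m$ and $|E(K_m)|=4m-7$, whence $m\in\{2,7\}$ by Lemma~\ref{lem:comb}(ii). To exclude $m=2$ I would show that no two-point set is full in a hyper-cylindrical space: by Corollary~\ref{cor:full_product} it suffices to see that a set of at most two points is never full in the Euclidean space $\bR^3$, and indeed, given $q_1,q_2\in\bR^3$ one may choose a nonzero skew-symmetric $S\in\M_3(\bR)$ with $S(q_1-q_2)=0$ (possible because $q_1-q_2$ lies in the kernel of the skew map $x\mapsto(q_1-q_2)\times x$), producing a nonzero infinitesimal rigid motion $\eta(x)=S(x-q_1)$ that vanishes at $q_1$ and $q_2$. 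Hence $G=K_7$ or $|V|\ge 8$.

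For (ii), the base case $m=7$ would be handled exactly as in Theorem~\ref{thm:cylmain}: one writes down an explicit placement $p$ of $K_7$ in $\bR^3\times\bR$ whose monochrome subgraphs $G_2=\kappa_p^{-1}(\{2\})$ and $G_1=\kappa_p^{-1}(\{1\})$ are, respectively, a spanning tree (so $|E(G_2)|=6$) and a graph on $7$ vertices with $|E(G_1)|=15$ that is generically minimally rigid in $\bR^3$ and for which $p_1$ is a regular placement, the latter being confirmed by exhibiting the associated Euclidean rigidity matrix and reading off its rank as in Theorem~\ref{thm:cylmain}. Lemma~\ref{lem:hcylwp} is used to verify that $p$ is well-positioned with the stated colour classes; then $(G_2,p_2)$ is minimally infinitesimally rigid on the line (its graph is a spanning tree) and $(G_1,p_1)$ is minimally infinitesimally rigid in $\bR^3$ (a regular placement of a generically isostatic graph), so $(K_7,p)$ is minimally infinitesimally rigid in $(\bR^3\times\bR,\|\cdot\|_{\hcyl})$ by Theorem~\ref{ProjThm}; and $(K_7,p)$ is full by Corollary~\ref{cor:full_product}, since $p_1$ affinely spans $\bR^3$ and $p_2$ takes at least two values.

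For the inductive step, suppose $(K_m,p)$ with $m\ge 7$ is full, well-positioned and infinitesimally rigid, with monochrome subgraphs $G_1,G_2$. Then $G_1$ is generically rigid in $\bR^3$, hence has minimum degree at least $3$, and $G_2$ is spanning, so every vertex of $K_m$ is incident with at least three colour-$1$ edges and at least one colour-$2$ edge. Fix such a vertex $v_k$ and adjoin a new vertex $v_{m+1}$ with $p_2(v_{m+1})=p_2(v_k)$ and $p_1(v_{m+1})$ a sufficiently small generic perturbation of $p_1(v_k)$ chosen off the cone of Lemma~\ref{lem:hcylwp}. By Lemma~\ref{lem:Product_limits}, for the perturbation small enough each edge $v_{m+1}v_i$ with $i\ne k$ inherits the colour of $v_kv_i$, while $v_{m+1}v_k$ is colour $1$; hence $G_1'=G_1+v_{m+1}$ is a $0$-extension of $G_1$ onto at least three vertices, so $(G_1',p_1)$ is infinitesimally rigid in $\bR^3$ for generic $p_1(v_{m+1})$, and $G_2'=G_2+v_{m+1}$ is a spanning tree with a pendant edge adjoined, hence spanning and connected, so $(G_2',p_2)$ is infinitesimally rigid on the line. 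By Theorem~\ref{ProjThm}, $(K_{m+1},p)$ is infinitesimally rigid; it is full by Corollary~\ref{cor:full_product}; and $v_{m+1}$ again meets at least three colour-$1$ edges and at least one colour-$2$ edge, so the induction proceeds and yields the result for every $m\ge 7$.

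The chief obstacle is the base case. In contrast to the cylindrical setting, where every placement on at least one edge is automatically full (Lemma~\ref{CylLemma}(iii)) and a suitable monochrome Laman graph is easy to exhibit, here one must produce a concrete $7$-point configuration in $\bR^3\times\bR$ whose colour classes are exactly a prescribed spanning tree $G_2$ and a prescribed generically isostatic graph $G_1$ in $\bR^3$. Since $K_7\setminus G_2$ is forced to be dense (it must have $15=3\cdot 7-6$ edges), the $\bR^3$-distances and the line-coordinates interact strongly through the colouring rule of Lemma~\ref{lem:hcylwp}, so choosing $G_1$, $G_2$ and the two families of coordinates simultaneously — and verifying via the rigidity matrix that $p_1$ is regular — is where the real work lies.
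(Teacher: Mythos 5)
Your part~(i) and your inductive step for~(ii) follow essentially the same route as the paper: the Maxwell bound from Theorem~\ref{thm:maxwell} combined with Lemma~\ref{lem:comb} pins down $m\in\{2,7\}$ when $m\le 7$, and the $K_2$ case is excluded by fullness. (Your explicit argument that a two-point set cannot be full — via Corollary~\ref{cor:full_product} and a rotation of Euclidean $\bR^3$ fixing the axis through the two projected points — is a correct justification of a step the paper merely asserts.) The vertex-addition induction, placing $v_{m+1}$ near an existing vertex so that the new edges inherit colours via Lemma~\ref{lem:Product_limits} and then invoking Theorem~\ref{ProjThm}, is likewise the paper's strategy, adapted from Theorem~\ref{thm:Km1}.

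The genuine gap is the base case of~(ii). You describe what a placement of $K_7$ in $\bR^3\times\bR$ would have to achieve — a monochrome spanning tree $G_2$ with $6$ edges and a $15$-edge monochrome graph $G_1$ that is generically isostatic in $\bR^3$ with $p_1$ regular — and you correctly identify this as ``where the real work lies,'' but you do not produce it, so the existence claim for $m=7$ is not actually established. The paper does the work: it exhibits explicit coordinates (e.g.\ $v_1\mapsto(0,-1,-1,0)$, $v_2\mapsto(0,1,-1,0)$, \dots, $v_7\mapsto(0,1,1,\delta)$ with $\delta\in(1,\tfrac65)$, $\epsilon\in(\tfrac\delta3,1-\tfrac\delta2)$), verifies the colour classes via Lemma~\ref{lem:hcylwp}, and certifies generic minimal rigidity of the resulting $G_1$ not by a rank computation but structurally — $G_1$ is a block-and-hole graph in the sense of \cite{whi88}, or equivalently is obtained from $K_4$ by successively adjoining degree-three vertices — after which a perturbation makes $p_1$ regular and $(G,p)$ full. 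Note in particular that the paper's explicit $p_1$ lies in a hyperplane of $\bR^3$ (all first coordinates vanish), so the perturbation step is genuinely needed to obtain regularity and fullness; your assertion that ``$p_1$ affinely spans $\bR^3$'' is a property one must arrange, not one that comes for free from the colouring constraints. Without a concrete configuration satisfying the strongly interacting constraints you yourself point out, part~(ii) remains unproved.
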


\begin{proof}
(i) By Lemma \ref{HCylLemma}, we may assume, without loss of generality, that $(X,\|\cdot\|)=(\H_2(\bC),\|\cdot\|_{c_1})$.
By the Maxwell condition for $(\H_2(\bC),\|\cdot\|_{c_1})$ given in Theorem \ref{thm:maxwell}$(ii)$, we have $|E|=4|V|-7$. 
If $|V|\in\{3,4,5,6\}$ then, by Lemma \ref{lem:comb}, 
$|E|<4|V|-7$. 
The complete graph $K_2$ does not admit a full bar-joint framework in a hyper-cylindrical space. Thus $|V|\geq 7$.
If $|V|=7$ then $|E|=4|V|-7=21=\binom 72$ edges and so $G=K_7$.

$(ii)$ It is again sufficient to construct a suitable placement of $K_m$ in  
$(\bR^4,\|\cdot\|_{\hcyl})$.
First consider the case $m=7$. Let $\delta\in(1,\tfrac65)$ and $\epsilon\in(\tfrac\delta3,1-\tfrac\delta2)$,
and consider the placement $p:V\to\bR^4$ with
\begin{align*} 
  v_1&\mapsto(0,-1,-1,0),&
  v_2&\mapsto(0,1,-1,0)\\
  v_3&\mapsto(0,1,1,2\epsilon),&
  v_4&\mapsto(0,-1,1,-\delta)\\
  v_5&\mapsto (0,-1,1,2+\epsilon),& 
  v_6&\mapsto (0,1,-1,-2+3\epsilon)\\
	v_7&\mapsto (0,1,1,\delta).
\end{align*}
A calculation using Lemma~\ref{lem:hcylwp} shows that $(G,p)$ is well-positioned, with monochrome subgraphs~$G_1=\kappa_p^{-1}(\{1\})$ and~$G_2=\kappa_p^{-1}(\{2\})$ as indicated in Figure~\ref{fig:K7-colouring}. The graph $G_1$ is is an example of a block-and-hole graph, with one quadrilateral block and one quadrilateral hole and it follows from \cite[Theorem 4.1]{whi88} that $G_1$ is generically minimally rigid in $(\bR^3,\|\cdot\|_2)$. Alternatively, note that $G_1$ can be constructed from $K_4$ by successively adjoining vertices of degree three. It is a standard result that $K_4$ is generically minimally rigid in $(\bR^3,\|\cdot\|_2)$ (see for example \cite[Theorem 3.1]{whi84}), and that the graph operation of adjoining degree three vertices preserves generic minimal rigidity in $(\bR^3,\|\cdot\|_2)$ (\cite[Corollary 2.2]{whi84}).    
Also note that the monochrome subgraph $G_2$ is a spanning tree. 
By perturbing the vertices of $G$, we may assume that the projected monochrome subframework $(G_1,p_1)$ is regular, and hence minimally infinitesimally rigid, and also that $(G,p)$ is full. 
By Theorem~\ref{ProjThm}, $(G,p)$ is minimally infinitesimally rigid.
The argument from the proof of Theorem~\ref{thm:Km1} can now be adapted to show that $K_m$ admits a full, well-positioned and  infinitesimally rigid placement in $(\bR^4,\|\cdot\|_{\hcyl})$ for any $m>7$.
\end{proof}

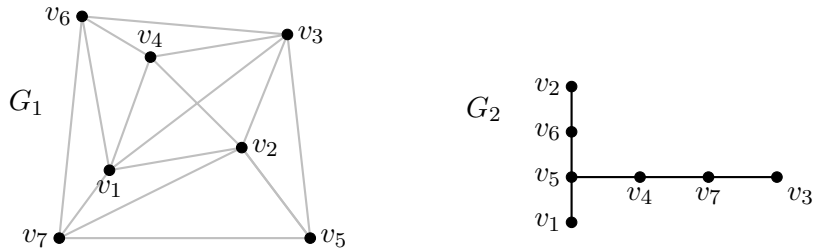
\begin{figure}[ht]
 \centering
 \begin{tabular}{  l  r  }

 \begin{minipage}{.37\textwidth}
 \begin{tikzpicture}[scale=1.2]
 
    \clip (-3,-2.25) rectangle (3.7cm,1.4cm); %
  
  \coordinate (A1) at (-1.2,-0.5);
  \coordinate (A2) at (0.25,-0.25);
  \coordinate (A3) at (0.75,1);
  \coordinate (A4) at (-0.75,0.75);
  \coordinate (A5) at (1,-1.25);
  \coordinate (A6) at (-1.5,1.2);
	\coordinate (A7) at (-1.75,-1.25);
		
  \draw[thick,lightgray] (A1) -- (A2) -- (A3) -- (A4) -- (A1) -- (A3) -- (A6) -- (A7) -- (A5) -- (A2) -- (A4) -- (A6) -- (A1);
  \draw[thick,lightgray] (A1) -- (A7) -- (A2) -- (A5) -- (A3);
	
  \node[draw,circle,inner sep=1.4pt,fill] at (A1) {};
  \node[draw,circle,inner sep=1.4pt,fill] at (A2) {};
  \node[draw,circle,inner sep=1.4pt,fill] at (A3) {};
  \node[draw,circle,inner sep=1.4pt,fill] at (A4) {};
	\node[draw,circle,inner sep=1.4pt,fill] at (A5) {};
  \node[draw,circle,inner sep=1.4pt,fill] at (A6) {};
	\node[draw,circle,inner sep=1.4pt,fill] at (A7) {};

  \node[left] at (-1.8,0.25) {$G_1$};
  
	\node[below] at (A1) {$v_1$};
  \node[right] at (A2) {$v_2$};
	\node[right] at (A3) {$v_3$};
  \node[above] at (A4) {$v_4$};
	\node[right] at (A5) {$v_5$};
  \node[left] at (A6) {$v_6$};	
	\node[left] at (A7) {$v_7$};
  \end{tikzpicture}
\end{minipage}

\hspace{18mm}
 \begin{minipage}{.37\textwidth}
 \begin{tikzpicture}[scale=1.2]
 
    \clip (-2.25,-0.7) rectangle (2.25cm,2cm); %
  
  \coordinate (A1) at (-1,0);
  \coordinate (A2) at (-1,1.5);
  \coordinate (A3) at (1.25,0.5);
  \coordinate (A4) at (-0.25,0.5);
  \coordinate (A5) at (-1,0.5);
  \coordinate (A6) at (-1,1);
	\coordinate (A7) at (0.5,0.5);

\draw[thick] (A2) -- (A6) -- (A5) -- (A4) -- (A7) -- (A3);
\draw[thick] (A5) -- (A1);

  \node[draw,circle,inner sep=1.4pt,fill] at (A1) {};
  \node[draw,circle,inner sep=1.4pt,fill] at (A2) {};
  \node[draw,circle,inner sep=1.4pt,fill] at (A3) {};
  \node[draw,circle,inner sep=1.4pt,fill] at (A4) {};
  \node[draw,circle,inner sep=1.4pt,fill] at (A5) {};
  \node[draw,circle,inner sep=1.4pt,fill] at (A6) {};
	\node[draw,circle,inner sep=1.4pt,fill] at (A7) {};

  \node[left] at (-1.65,1.25) {$G_2$};
 
	\node[left] at (A1) {$v_1$};
  \node[left] at (A2) {$v_2$};
	\node[below right] at (A3) {$v_3$};
  \node[below] at (A4) {$v_4$};
	\node[left] at (A5) {$v_5$};
  \node[left] at (A6) {$v_6$};
  \node[below] at (A7) {$v_7$};
		
  \end{tikzpicture}
\end{minipage}

\end{tabular}
\caption{The monochrome subgraphs $G_1$ and $G_2$ of $K_7$ constructed in Theorem~\ref{thm:hcyl} are respectively a $(3,6)$-tight, generically $3$-rigid, block-and-hole graph and a tree.}
\label{fig:K7-colouring}
\end{figure}

\bigskip

\end{document}